 %21 septembre 2010

\documentclass[12pt]{amsart}
\usepackage[T1]{fontenc}
\usepackage{amssymb}
\usepackage{a4wide}
\usepackage{graphicx}
\usepackage{verbatim}
\usepackage{amsmath}
\usepackage{version}
\usepackage{hyperref}
%\usepackage{babel}

%%%%%%%%%%%%%%%%%%%%%%%%%%%%%% Textclass specific LaTeX commands.
%\newcommand{\be}{\begin{equation}}
%\newcommand{\ee}{\end{equation}}

 \theoremstyle{plain}    
 \newtheorem{thm}{Theorem}[section]
 \numberwithin{equation}{section} %% Comment out for sequentially-numbered
 \numberwithin{figure}{section} %% Comment out for sequentially-numbered
 \theoremstyle{plain}
 \newtheorem{prop}[thm]{Proposition} %%Delete [thm] to re-start numbering
 \newtheorem{lem}[thm]{Lemma} %%Delete [thm] to re-start numbering
  %%Delete [thm] to re-start numbering
  %%Delete [thm] to re-start numbering

 \theoremstyle{definition}
 \newtheorem{rem}[thm]{Remark} %%Delete [thm] to re-start numbering
 \newtheorem{definition}[thm]{Definition}
 \newtheorem{ex}[thm]{Example}

\setcounter{tocdepth}{1}

\newcommand{\nwc}{\newcommand}

%font change

\nwc{\mf}{\mathbf} %Latex (as in \bf not tilted math letters)
\nwc{\blds}{\boldsymbol} %Latex 
\nwc{\ml}{\mathcal} %Latex

%greek letters

\nwc{\lam}{\lambda}
\nwc{\del}{\delta}
\nwc{\Del}{\Delta}
\nwc{\Lam}{\Lambda}
\nwc{\elll}{\ell}
%blackboard bold math

\nwc{\IA}{\mathbb{A}} %algebraic
\nwc{\IB}{\mathbb{B}} %ball
\nwc{\IC}{\mathbb{C}} %complex
\nwc{\ID}{\mathbb{D}} %Dedekind
\nwc{\IE}{\mathbb{E}} %Euklides
\nwc{\IF}{\mathbb{F}} %finite field
\nwc{\IG}{\mathbb{G}} %Gauss
\nwc{\IH}{\mathbb{H}} %Hilbert\N-subgroup
\nwc{\IN}{\mathbb{N}} %natural
\nwc{\IP}{\mathbb{P}} %prime
\nwc{\IQ}{\mathbb{Q}} %rational
\nwc{\IR}{\mathbb{R}} %real
\nwc{\IS}{\mathbb{S}} %sphere
\nwc{\IT}{\mathbb{T}} %torus
\nwc{\IZ}{\mathbb{Z}} %integers

\def\bbleft{{\mathchoice {[\mskip-3mu {[}} {[\mskip-3mu {[}}{[\mskip-4mu {[}}{[\mskip-5mu {[}}}}
\def\bbright{{\mathchoice {]\mskip-3mu {]}} {]\mskip-3mu {]}}{]\mskip-4mu {]}}{]\mskip-5mu {]}}}}
\nwc{\setK}{\bbleft 1,K \bbright}
\nwc{\setN}{\bbleft 1,\cN \bbright}
%Straight (vector) bold letters

%lowercase

\nwc{\va}{{\bf a}}
\nwc{\vb}{{\bf b}}
\nwc{\vc}{{\bf c}}
\nwc{\vd}{{\bf d}}
\nwc{\ve}{{\bf e}}
\nwc{\vf}{{\bf f}}
\nwc{\vg}{{\bf g}}
\nwc{\vh}{{\bf h}}
\nwc{\vi}{{\bf i}}
\nwc{\vI}{{\bf I}}
\nwc{\vj}{{\bf j}}
\nwc{\vk}{{\bf k}}
\nwc{\vl}{{\bf l}}
\nwc{\vm}{{\bf m}}
\nwc{\vM}{{\bf M}}
\nwc{\vn}{{\bf n}}
\nwc{\vo}{{\it o}}
\nwc{\vp}{{\bf p}}
\nwc{\vq}{{\bf q}}
\nwc{\vr}{{\bf r}}
\nwc{\vs}{{\bf s}}
\nwc{\vt}{{\bf t}}
\nwc{\vu}{{\bf u}}
\nwc{\vv}{{\bf v}}
\nwc{\vw}{{\bf w}}
\nwc{\vx}{{\bf x}}
\nwc{\vy}{{\bf y}}
\nwc{\vz}{{\bf z}}
\nwc{\bal}{\blds{\alpha}}
\nwc{\bep}{\blds{\epsilon}}
\nwc{\barbep}{\overline{\blds{\epsilon}}}
\nwc{\bnu}{\blds{\nu}}
\nwc{\bmu}{\blds{\mu}}
\nwc{\bet}{\blds{\eta}}

%bold letters
%\b* letters are tilted in math mode and scale in equations. 
%but cannot be used in plain text format.

%I. lowercase

\nwc{\bk}{\blds{k}}
\nwc{\bm}{\blds{m}}
\nwc{\bM}{\blds{M}}
\nwc{\bp}{\blds{p}}
\nwc{\bq}{\blds{q}}
\nwc{\bn}{\blds{n}}
\nwc{\bv}{\blds{v}}
\nwc{\bw}{\blds{w}}
\nwc{\bx}{\blds{x}}
\nwc{\bxi}{\blds{\xi}}
\nwc{\by}{\blds{y}}
\nwc{\bz}{\blds{z}}

%caligraphic

\nwc{\cA}{\ml{A}}
\nwc{\cB}{\ml{B}}
\nwc{\cC}{\ml{C}}
\nwc{\cD}{\ml{D}}
\nwc{\cE}{\ml{E}}
\nwc{\cF}{\ml{F}}
\nwc{\cG}{\ml{G}}
\nwc{\cH}{\ml{H}}
\nwc{\cI}{\ml{I}}
\nwc{\cJ}{\ml{J}}
\nwc{\cK}{\ml{K}}
\nwc{\cL}{\ml{L}}
\nwc{\cM}{\ml{M}}
\nwc{\cN}{\ml{N}}
\nwc{\cO}{\ml{O}}
\nwc{\cP}{\ml{P}}
\nwc{\cQ}{\ml{Q}}
\nwc{\cR}{\ml{R}}
\nwc{\cS}{\ml{S}}
\nwc{\cT}{\ml{T}}
\nwc{\cU}{\ml{U}}
\nwc{\cV}{\ml{V}}
\nwc{\cW}{\ml{W}}
\nwc{\cX}{\ml{X}}
\nwc{\cY}{\ml{Y}}
\nwc{\cZ}{\ml{Z}}

\nwc{\fA}{\mathfrak{a}}
\nwc{\fB}{\mathfrak{b}}
\nwc{\fC}{\mathfrak{c}}
\nwc{\fD}{\mathfrak{d}}
\nwc{\fE}{\mathfrak{e}}
\nwc{\fF}{\mathfrak{f}}
\nwc{\fG}{\mathfrak{g}}
\nwc{\fH}{\mathfrak{h}}
\nwc{\fI}{\mathfrak{i}}
\nwc{\fJ}{\mathfrak{j}}
\nwc{\fK}{\mathfrak{k}}
\nwc{\fL}{\mathfrak{l}}
\nwc{\fM}{\mathfrak{m}}
\nwc{\fN}{\mathfrak{n}}
\nwc{\fO}{\mathfrak{o}}
\nwc{\fP}{\mathfrak{p}}
\nwc{\fQ}{\mathfrak{q}}
\nwc{\fR}{\mathfrak{r}}
\nwc{\fS}{\mathfrak{s}}
\nwc{\fT}{\mathfrak{t}}
\nwc{\fU}{\mathfrak{u}}
\nwc{\fV}{\mathfrak{v}}
\nwc{\fW}{\mathfrak{w}}
\nwc{\fX}{\mathfrak{x}}
\nwc{\fY}{\mathfrak{y}}
\nwc{\fZ}{\mathfrak{z}}

\newcommand{\lieg}{\mathfrak{g}}
\newcommand{\lien}{\mathfrak{n}}
\newcommand{\lienb}{\bar\lien}
\newcommand\Nf{N_\textrm{fast}}
\newcommand\Nbf{\overline{N}_\textrm{fast}}
\newcommand\lnf{\lien_\textrm{fast}}
\newcommand\lns{\lien_\textrm{slow}}
\newcommand\lnbf{\lienb_\textrm{fast}}
\newcommand\lnbs{\lienb_\textrm{slow}}

%% (wide)tilde letters

\nwc{\tA}{\widetilde{A}}
\nwc{\tB}{\widetilde{B}}
\nwc{\tE}{E^{\vareps}}
%\nwc{\tcO}{\widetilde{\mathcal{O}}}
\nwc{\tk}{\tilde k}
\nwc{\tN}{\tilde N}
\nwc{\tP}{\widetilde{P}}
\nwc{\tQ}{\widetilde{Q}}
\nwc{\tR}{\widetilde{R}}
\nwc{\tV}{\widetilde{V}}
\nwc{\tW}{\widetilde{W}}
\nwc{\ty}{\tilde y}
\nwc{\teta}{\tilde \eta}
\nwc{\tdelta}{\tilde \delta}
\nwc{\tlambda}{\tilde \lambda}
%\nwc{\tchi}{\tilde \chi}
\nwc{\ttheta}{\tilde \theta}
\nwc{\tvartheta}{\tilde \vartheta}
\nwc{\tPhi}{\widetilde \Phi}
\nwc{\tpsi}{\tilde \psi}
\nwc{\tmu}{\tilde \mu}

%miscellany
\nwc{\To}{\longrightarrow} %limits

\nwc{\ad}{\rm ad}
\nwc{\eps}{\epsilon}
\nwc{\ep}{\epsilon}
\nwc{\vareps}{\varepsilon}

\nwc{\bom}{\underline{\omega}}
\def\om{{\omega}}
\def\ep{\epsilon}

\def\diag{{\rm diag}}

\def\sq2{\sqrt{2}}

\def\t2{{\mathbb T}^2}
%\def\tt2{{\mathbb T}^2}
%\nwc{\t1}{{\mathbb T}^1}
\def\s2{{\mathbb S}^2}

\def\R{\mathbb{R}}

\def\Z{\mathbb{Z}}
\def\C{\mathbb{C}}

\nwc{\lap}{\bigtriangleup}
\nwc{\rest}{\restriction}
\nwc{\Diff}{\operatorname{Diff}}
\nwc{\diam}{\operatorname{diam}}
\nwc{\Res}{\operatorname{Res}}
\nwc{\Spec}{\operatorname{Spec}}
\nwc{\Vol}{\operatorname{Vol}}
\nwc{\Op}{\operatorname{Op}}
\nwc{\supp}{\operatorname{supp}}
\nwc{\Span}{\operatorname{span}}

\nwc{\dia}{\varepsilon}
\nwc{\cut}{f}
\nwc{\qm}{u_\hbar}

\def\hto0{\xrightarrow{\hbar\to 0}}

\def\rto0{\xrightarrow{r\to 0}}

\providecommand{\abs}[1]{\lvert#1\rvert}
\providecommand{\norm}[1]{\lVert#1\rVert}

\nwc{\la}{\langle}
\nwc{\ra}{\rangle}
\nwc{\lp}{\left(}
\nwc{\rp}{\right)}

%\nwc{\bal}{\begin{align}}
\nwc{\bequ}{\begin{equation}}
\nwc{\be}{\begin{equation}}
\nwc{\ben}{\begin{equation*}}
\nwc{\bea}{\begin{eqnarray}}
\nwc{\bean}{\begin{eqnarray*}}
\nwc{\bit}{\begin{itemize}}
\nwc{\bver}{\begin{verbatim}}

%\nwc{\eal}{\end{align}}
\nwc{\eequ}{\end{equation}}
\nwc{\ee}{\end{equation}}
\nwc{\een}{\end{equation*}}
\nwc{\eea}{\end{eqnarray}}
\nwc{\eean}{\end{eqnarray*}}
\nwc{\eit}{\end{itemize}}
\nwc{\ever}{\end{verbatim}}

\newcommand{\defeq}{\stackrel{\rm{def}}{=}}
\newcommand{\wl}{w_\ell}

\newcommand{\bN}{\overline{N}}
\newcommand{\bPz}{\bar{P}_0}
\newcommand{\bd}{\bar{d}}
\newcommand{\ars}{\fA^{*}}

\newcommand{\SL}{\textrm{SL}}
\newcommand{\bX}{\mathbf{X}}
\newcommand{\bY}{\mathbf{Y}}
\newcommand{\bS}{\mathbf{S}}

\newcommand\wrt{w.r.t.\ }
\DeclareMathOperator{\Jac}{Jac}
\DeclareMathOperator{\Ad}{Ad}

\begin{document}

\title[Quantum limits on locally symmetric spaces]
{A Haar component for quantum limits on locally symmetric spaces}

   \author[N. Anantharaman]{Nalini Anantharaman}
   \author[L. Silberman]{Lior Silberman}

\address{Laboratoire de Math\'ematique, Universit\'e d'Orsay Paris XI, 91405 Orsay Cedex, France}
\email{Nalini.Anantharaman@math.u-psud.fr}
\address{Department of Mathematics, University of British Columbia,
  Vancouver\ \ BC\ \ V6T 1Z2, Canada}
\email{lior@math.ubc.ca}

\begin{abstract}
We prove lower bounds for the entropy of limit measures associated
to non-degenerate sequences of eigenfunctions on locally symmetric spaces
of non-positive curvature.
In the case of certain compact quotients of the space of positive definite
$n\times n$ matrices (any quotient for $n=3$, quotients associated to inner
forms in general), measure classification results then show that the limit
measures must have a Lebesgue component.  This is consistent with the
conjecture that the limit measures are absolutely continuous.
\end{abstract}

\thanks{N. Anantharaman wishes to acknowledge the support of Agence Nationale de la Recherche,
under the grants ANR-09-JCJC-0099-01 and ANR-07-BLAN-0361.}

\maketitle

 %  \abstract{We study the joint eigenfunctions of the algebra of translation-invariant differential operators, on a compact locally symmetric space of nonpositive curvature. We focus on the case of compact quotients of $SL(n, \R)/SO(n, \R)$. The quantum unique ergodicity conjecture predicts that eigenfunctions should become equidistributed asymptotically, as the spectral parameter goes to infinity. For $n=3$, we are able to show that any weak limit associated with a sequence of eigenfunctions has a Haar component of weight at least $\frac14$. For $n>3$, we obtain a similar result for cocompact quotients coming from lattices of ``inner type'' of $SL(n, \R)$. The method combines
 %entropy estimates (that constitutes an improvement of those obtained by Anantharaman-Nonnenmacher~\cite{AN07}) and measure classification results of Einsiedler-Katok, Einsiedler-Katok-Lindenstrauss, and Lindenstrauss-Weiss.}

%\part{ENTROPY ESTIMATES FOR EIGENFUNCTIONS ON LOCALLY SYMMETRIC SPACES}
\tableofcontents
\section{Introduction \label{s:intro}}
\subsection{Background and motivations}
The study of high-energy Laplacian eigenfunctions on negatively curved manifolds has progressed considerably in recent years. In the so-called ``arithmetic'' case, Elon Lindenstrauss has proved the Quantum Unique Ergodicity conjecture for Hecke eigenfunctions on congruence quotients of the hyperbolic plane~\cite{Linden06}.
In the ``general case'' (variable negative curvature, with no arithmetic structure), the first author has proved that semiclassical limits of eigenfunctions have positive Kolmogorov-Sinai entropy, in a joint work with St\'ephane Nonnenmacher~\cite{An, AN07, AKN07}. 
%This result has been extended by Gabriel Rivi\`ere~\cite{Riv09} to the case of nonpositively curved surfaces.

The two approaches are very different, but have in common the central role of the notion of entropy.
In Lindenstrauss' work, an entropy bound is obtained from arithmetic considerations~\cite{BLi03}, and then combined
with the measure rigidity phenomenon to prove Quantum Unique Ergodicity.

It is very natural to ask about a possible generalization of these results to locally symmetric spaces of higher rank and nonpositive curvature. In this case the Laplacian will be replaced by the entire algebra of translation-invariant differential operators, as proposed by Silberman and Venkatesh in~\cite{SilVen1}. A generalization of the entropic bound of~\cite{BLi03} has been worked out by these authors in the adelic case, and as a result they could prove a form of Arithmetic Quantum Unique Ergodicity in the case of the locally symmetric space $\Gamma\backslash SL_n(\IR)$, when $n$ is prime and $\Gamma$ is derived from
a division algebra over $\IQ$ \cite{SilVen2}. The goal of this paper is to generalize the ``non-arithmetic'' approach of~\cite{AN07, AKN07} in this context -- that is to say, prove an entropy bound without using the Hecke operators or other arithmetic techniques.  Doing so, we will not require
some of the assumptions used in~\cite{SilVen2}: we will work with an arbitrary
connected semisimple Lie group with finite center $G$, $\Gamma$ will be any {\em cocompact} lattice in $G$, and we will not
use the Hecke operators.  Combining the entropy bound with the measure classification results of~\cite{EK03, EKL06, LW01},
in the case of $G=SL_3(\IR)$, $\Gamma$ arbitrary, or $G=SL_n(\IR)$, $n$ arbitrary but $\Gamma$ derived from
a division algebra over $\IQ$, we will prove a weakened form
of Quantum Unique Ergodicity : any semiclassical measure has the Haar measure
as an ergodic component\footnote{Unfortunately, we are not able to extend the method to the case of $\Gamma=SL_n(\IZ)$, which is not cocompact -- unless we input the extra assumption that there is no escape of mass to infinity, or that the mass escapes very fast.}.

In addition to the intrinsic interest of locally symmetric spaces, there is yet another motivation to study these models. So far, the entropic bound of~\cite{AN07, AKN07} is not satisfactory for manifolds of variable negative curvature (\cite{An} proves that the entropy is positive, but without giving an explicit bound). Gabriel Rivi\`ere has been able to treat the case of surfaces~\cite{Riv09-1, Riv09-2}; he is even able to work in nonpositive curvature, but the case of higher dimensions remains open. The problem comes from the existence of several distinct Lyapunov exponents at each point. Locally symmetric spaces are an attempt to make some progress in this direction~: we will deal with flows that have distinct Lyapunov exponents, some of which may even vanish. Still, considerable simplifications arise from the fact that they are homogeneous spaces, and that the stable and unstable foliations are smooth.  It would be extremely interesting to extend the techniques of~\cite{AN07, AKN07, Riv09-1, Riv09-2} to systems that are not uniformly hyperbolic (euclidean billiards would be the ultimate goal). 

 Let $G$ be a connected semisimple Lie group with finite center, $K<G$ be a maximal
compact subgroup, $\Gamma<G$ a uniform lattice. We will work on the symmetric space $\bS=G/K$, the compact quotient\footnote{We do not assume that $\Gamma$ is torsion free. When speaking of smooth functions on $\bY$, we have in mind smooth functions on $\bS$ that are $\Gamma$-invariant.} $\bY=
\Gamma\backslash G/K$, and the homogeneous space $\bX=\Gamma\backslash G$.  We will endow $G$ with its Killing metric, yielding a $G$-invariant Riemannian metric on $G/K$, with nonpositive curvature.
%We will normalize the Haar measures $dx$ on $X$, $dk$ on $K$ and $dy$ on $\bY$ to have total mass $1$. We will denote by $e$ the identity element in $G$, and $o=eK$ the origin in $G/K$.

Call $\cD$ the algebra of $G$-invariant differential operators on $\bS$;
it follows from the structure of semisimple Lie algebras that
this algebra is commutative and finitely generated~\cite[Ch.\ II \S4.1, \S5.2]{Hel}. The number of generators,
to be denoted $r$, coincides with the real rank of $\bS$ (that is
the dimension of a maximal flat totally geodesic submanifold),
and, in a more algebraic fashion, with the dimension of $\fA$,
a maximal abelian semisimple subalgebra\footnote{We shall denote
$\fG$ the Lie algebra of $G$, $\fK$ the Lie algebra of $K$, and so on.} of $\fG$ orthogonal to $\fK$. More background and notations concerning Lie groups are given in Section \ref{s:lie}.

\begin{rem} The algebra $\cD$ always contains the Laplacian. If the symmetric space $\bS$ has rank $r=1$, then $\cD$ is generated by the Laplacian.
\end{rem} 

\begin{ex} The case $G=SO_o(d, 1)$ yields the $d$-dimensional hyperbolic space
$S=\IH^d$ (of rank $1$), already dealt with in~\cite{AN07, AKN07}.

We will focus on the example of $G=SL_n( \IR)$, $K=SO(n, \IR)$.
In that case, $\fG$ is the set of matrices with trace $0$, $\fK$ the antisymmetric matrices, and one can take
$\fA$ to be the set of diagonal matrices with trace $0$. The connected group generated by $\fA$ is denoted $A$, in this example it is the set
of diagonal matrices of determinant $1$ and with positive entries. The rank is $r=n-1$.
\end{ex}

We will be interested in $\Gamma$-invariant joint eigenfunctions of $\cD$; in other words, eigenfunctions of $\cD$ that go to the quotient $\Gamma\backslash G/K$. If we choose a set of generators of $\cD$, the collection of eigenvalues
can be represented as an element of $\IR^{r}$. We will recall in Section \ref{s:HC} that it is more natural to parametrize 
the eigenvalue by an element $\nu \in \fA_{\IC}^*$, the complexified dual of
$\fA$.  More precisely, $\nu\in\fA_{\IC}^*/W$ where $W$ is the Weyl group of $G$, a finite group given by $M'/M$ where $M'$ is the normalizer of $A$, and $M$ the centralizer of $A$, in $K$.  

\subsection{Semiclassical limit\label{s:semi}}
Silberman and Venkatesh suggested to study the $L^2$-normalized eigenfunctions $(\psi)$ in the limit
$\norm{\nu}\To +\infty$, as a variant of the very popular question of understanding high-energy
eigenfunctions of the Laplacian. 
The question of ``quantum ergodicity'' is to understand the asymptotic
behaviour of the family of probability measures
$d\bar\mu_\psi(y)=\abs{\psi(y)}^2 dy$ on $\bY=\Gamma\backslash G/K$.
They considered the case where
$\frac{\nu}{\norm{\nu}}$ has a limit $\nu_\infty\in  \fA_{\IC}^*/W$,
with the sequence $\nu$ satisfying a certain number of additional assumptions that we shall recall later.
For the moment, we just note that the real parts $\Re e(\nu)$ are uniformly bounded,
so that $\Re e(\nu_\infty)=0$ (\cite[Thm.\ 2.7 (3)]{SilVen1}).
We will denote $\Lambda_\infty=\Im m(\nu_\infty)=-i\nu_\infty$.

\subsection{Symplectic lift vs.\ representation-theoretic lift.}
The locally symmetric space $\bY$ should be thought of as the
\emph{configuration space} of our dynamical system.  To properly analyze
the dynamics it is necessary to move to an appropriate \emph{phase space}.
Once we \emph{lift} the eigenfunctions there, the measures become approximately
invariant under the dynamics and we can apply the tools of ergodic theory.
Two different kinds of lifts have been considered thus far:
the \emph{microlocal lift} (we also call it the {\em symplectic lift})
lifts the measure $\bar\mu_\psi$ to a distribution $\tilde \mu_\psi$ on the
cotangent bundle $T^*\bY=\Gamma\backslash T^*(G/K)$, taking advantage of its
symplectic structure.  This construction applies in great generality,
for example when $\bY$ is any compact Riemannian manifold.
The \emph{representation theoretic lift} used in
\cite{Wol01, Linden06, SilVen1, SilVen2, BuOlb}, specific to locally
symmetric spaces, lifts the measure $\bar\mu_\psi$ to a measure
$\mu_\psi$ defined on $\bX=\Gamma\backslash G$, taking advantage
of the homogeneous space structure of $G/K$.

The two lifts are very natural, and closely related. In our proofs we
will use a lot the symplectic point of view, as we will use the
Helgason-Fourier transform of $L^2$ functions, and interprete it
geometrically as a decomposition into lagrangian states.  But we will
also need to translate our results in terms of the representation
theoretic lift, in order to apply some measure classification results
from~\cite{EK03, EKL06}.

In the symplectic point of view, the dynamics is defined as follows.
On $T^*(G/K)$, consider the algebra $\cH$ of smooth $G$-invariant
Hamiltonians, that are polynomial in the fibers of the projection
$T^*(G/K)\To G/K$. This algebra is isomorphic to the algebra of
$W$-invariant polynomials on $\fA^*$
(consider the restriction on $\fA^*\subset T_o^* (G/K)$).
The structure theory of semisimple Lie algebras shows that $\cH$ is
isomorphic to a polynomial ring in $r$ generators.  Moreover, the elements
of $\cH$ commute under the Poisson bracket. Thus, we have on $T^*(G/K)$ a
family of $r$ independent commuting Hamiltonian flows $H_1,..., H_r$.
The Killing metric, seen as a function on $T^*(G/K)$, always belongs to $\cH$,
and its symplectic gradient generates the geodesic flow.
Of course, since all these flows are $G$-equivariant, they descend
to the quotient $T^*\bY$.

Joint energy layers of $\cH$ are naturally parametrized by elements
$\Lambda\in \fA^*/W$.
This is easy to explain geometrically: fix a point in $G/K$, say the
origin $o=eK$.  Consider the flat totally geodesic submanifold
$A.o\subset G/K$ going through $o$.  It is isometric to $\IR^r$,
and the cotangent space $T^*_o (A.o)$ is naturally isomorphic to $\fA^*$.
If $\cE\subset T^*(G/K)$ is a joint energy layer of $\cH$
(or equivalently a $G$-orbit in $T^*(G/K)$),
then there exists $\Lambda\in \fA^*$ such that
$\cE\cap T^*_o (A.o)=W.\Lambda$. See~\cite{Hil} for details.
We will denote $\cE_{\Lambda}$ the energy layer of parameter $\Lambda$.

In Section \ref{s:lift} we will use a quantization procedure to associate
to every $\Gamma$-invariant eigenfunction
$\psi$ a distribution $\tilde\mu_\psi$ on $T^*\bY$, called its microlocal lift.
This distribution projects to $\bar\mu_\psi$ on $\bY$.
This is a very standard construction, and so is the following theorem,
which is an avatar of propagation of singularities for solutions
of partial differential equations:

\begin{thm} \label{t:prop}
Assume that $\norm{\nu}\To +\infty$, and that
$\frac{\nu}{\norm{\nu}}$ has a limit $\nu_\infty$.
Denote $\Lambda_\infty=-i\nu_\infty\in \fA^*/W$.
Any limit (in the distribution sense) of the sequence $\tilde\mu_\psi$
is a probability measure on $T^*\bY$, carried by the energy layer
$\cE_{\Lambda_\infty}$, and invariant under the family of Hamiltonian
flows generated by $\cH$.
\end{thm}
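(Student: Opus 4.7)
My plan is to set up a semiclassical pseudodifferential calculus on $\bY$ with small parameter $\hbar = \norm{\nu}^{-1}$, realize the microlocal lift as
\[
\tilde\mu_\psi(a) = \langle \Op_\hbar(a)\psi,\psi\rangle_{L^2(\bY)},\qquad a\in C^\infty_c(T^*\bY),
\]
and then derive the three assertions of the theorem---positivity and normalization, concentration on $\cE_{\Lambda_\infty}$, and invariance under every flow in $\cH$---from three standard properties of such a calculus: sharp G\aa rding, symbolic composition, and symbolic commutator.

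For positivity and total mass, a Friedrichs-type (or Weyl-positive) quantization sends non-negative symbols to operators that are non-negative up to $O(\hbar)$, so every weak-$*$ limit $\mu$ of $\tilde\mu_\psi$ is a non-negative functional on $C^\infty_c(T^*\bY)$. The projection $\pi\colon T^*\bY\to\bY$ satisfies $\pi_*\tilde\mu_\psi = \bar\mu_\psi + O(\hbar)$, and once $\mu$ is concentrated on the compact set $\cE_{\Lambda_\infty}$ this identifies its total mass with $\lim \int \bar\mu_\psi = 1$. For the support statement, I fix homogeneous generators $D_1,\dots,D_r$ of $\cD$ with degrees $d_j$ and principal symbols $p_j\in\cH$; by the Harish-Chandra isomorphism the restriction of $p_j$ to $\fA^*\subset T^*_o(G/K)$ is the top-degree part of the polynomial $\chi_j(\nu)$ by which $D_j$ acts on $\psi$, so $\hbar^{d_j}\chi_j(\nu)\to p_j(\Lambda_\infty)$. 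From the symbolic identity
\[
\Op_\hbar(a)\,\hbar^{d_j}D_j = \Op_\hbar(a\,p_j) + O_{L^2\to L^2}(\hbar),
\]
testing against $\psi$ yields $\tilde\mu_\psi\!\left((p_j - \hbar^{d_j}\chi_j(\nu))\,a\right) = O(\hbar)$, and passing to the limit gives $\mu\!\left((p_j - p_j(\Lambda_\infty))\,a\right) = 0$ for every $a$, so $\supp\mu\subset\bigcap_j\{p_j = p_j(\Lambda_\infty)\} = \cE_{\Lambda_\infty}$.

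Invariance follows from the classical commutator formula
\[
[\hbar^{d_j}D_j,\Op_\hbar(a)] = \frac{\hbar}{i}\Op_\hbar(\{p_j,a\}) + O_{L^2\to L^2}(\hbar^2).
\]
Its $\psi$-expectation vanishes because $\psi$ is an eigenvector of $D_j$; dividing by $\hbar$ and passing to the limit yields $\mu(\{p_j,a\}) = 0$ for every symbol $a$, which is precisely invariance of $\mu$ under the Hamiltonian flow of $p_j$. Since the $p_j$ generate $\cH$ as a Poisson algebra and their flows commute, $\mu$ is invariant under the whole family. The main technical obstacle is to construct a quantization on $\bY$ that descends cleanly from $T^*(G/K)$, satisfies sharp G\aa rding, and interacts with the $G$-invariant operators in $\cD$ in such a way that the ``principal symbol'' in the semiclassical calculus matches the symbol $p_j$ produced by the Harish-Chandra picture; once this bookkeeping is in place the three limits above are standard propagation-of-singularities computations, adapted to the several commuting Hamiltonian flows characteristic of higher rank.
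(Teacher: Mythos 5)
Your proposal is correct and is exactly the standard semiclassical argument the paper has in mind: the paper explicitly calls Theorem~\ref{t:prop} ``a very standard construction \dots an avatar of propagation of singularities'' and does not write out a proof. The only bookkeeping difference is that the paper's quantization, built on the Helgason--Fourier transform, satisfies $\Op_\hbar(H)\psi_\nu = H(-i\hbar\nu)\psi_\nu$ \emph{exactly} for $H\in\cH$, so the composition and commutator steps you perform with $O(\hbar)$ and $O(\hbar^2)$ remainders become exact identities when tested against $\psi$; your version, using a generic (e.g.\ Friedrichs) $\hbar$-calculus, is if anything slightly more robust, since it avoids the caveat the paper must impose that symbols be supported away from the singular $G$-orbits of $T^*(G/K)$.
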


In order to transport this statement to get an $A$-invariant measure
on $\Gamma\backslash G$, we must now make some assumptions on $\Lambda_\infty$. Silberman and Venkatesh assume that $\nu_\infty$ is a regular element
of $\fA^*_{\IC}$, in the sense that it is not fixed by any non-trivial
element of $W$, and they show that it implies $\Re e(\nu_n)=0$ for all but
a finite number of $\nu_n$s in the sequence.
The element $\nu_\infty$ being regular is, of course, equivalent to
$\Lambda_\infty$ being regular; and this is also equivalent to the energy layer $\Lambda_\infty$ being regular, in the sense that the differentials
$dH_1,...,dH_r$ are independent there~\cite{Hil}.

 There is a surjective map
\begin{eqnarray}\label{e:pi}
\pi: G/M\times \fA^* &\To&  T^*(G/K)\\
(gM, \lambda) &\mapsto & (gK, g.\lambda).
\end{eqnarray}Remember that $M$ is the centralizer of $A$ in $K$.
The image of $G/M\times\{\lambda\}$ under $\pi$ is the energy layer
$\cE_{\lambda}$. The map $\pi_\lambda : G/M\times\{\lambda\}\To \cE_\lambda$
is a diffeomorphism if and only if $\lambda$ is regular (otherwise $\pi_\lambda$ is not injective).
Under $\pi_\lambda^{-1}$, the action of the Hamiltonian flow $\Phi^t_H$ generated by $H\in\cH$ on $\cE_\lambda$
is conjugate to 
$$gM\mapsto g \exp(t\,dH(\lambda))M.$$
The same statements hold after quotienting
on the left by $\Gamma$. Since $H$ is a function on $\fA^*$, the differential $dH(\lambda)$ is an element of $\fA$.
Denoting $R(e^{tX})$ the one--parameter
	flow on $G/M$ generated by $X\in \fA$ (acting by multiplication on the right), we can rephrase this by writing
	$$\pi\!\circ R(e^{t dH(\lambda)})=\Phi_H^t\circ\pi \qquad \mbox{ on }\cE_\lambda.$$

If $\lambda$ is regular, the elements $dH(\lambda)$ can be shown to
span $\fA$ as $H$ varies over $\cH$. Otherwise, we have~\cite{Hil}
\begin{equation}\label{e:span} 
\{dH(\lambda), H\in\cH\}=\{X\in \fA, \forall \alpha\in \Delta,\left( \left\la \alpha, \lambda\right\ra=0 \Longrightarrow
\alpha(X)=0\right)\},
\end{equation}
where $\Delta\subset\fA^*$ is the set of roots.

Thus, Theorem \ref{t:prop} may be rephrased as follows:
\begin{thm} Assume $\Lambda_\infty$ is regular. Then any limit (in the distribution sense) of the sequence $\tilde\mu_\psi$
yields a probability measure on $\Gamma\backslash G/M  $, invariant under the
right action of $A$ by multiplication.\end{thm}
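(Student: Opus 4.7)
The plan is to reduce the statement to Theorem~\ref{t:prop} by transporting the limit measure from the regular energy layer $\cE_{\Lambda_\infty}\subset T^*\bY$ to $\Gamma\backslash G/M$ via the map $\pi$. By Theorem~\ref{t:prop}, any distributional limit $\tilde\mu_\infty$ of the sequence $(\tilde\mu_\psi)$ is a probability measure supported on $\cE_{\Lambda_\infty}$ and invariant under every Hamiltonian flow $\Phi_H^t$ with $H\in\cH$. Regularity of $\Lambda_\infty$ ensures that $\pi_{\Lambda_\infty}:G/M\to \cE_{\Lambda_\infty}$ is a diffeomorphism; since $\pi$ is $G$-equivariant, this descends, after taking the left quotient by $\Gamma$, to a diffeomorphism between $\Gamma\backslash G/M$ and the image of $\cE_{\Lambda_\infty}$ in $T^*\bY$. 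I would then define $\mu_\infty$ on $\Gamma\backslash G/M$ as the pullback of $\tilde\mu_\infty$ by this diffeomorphism; it is automatically a probability measure.

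To verify $A$-invariance, the key identity is $\pi\circ R(e^{t\,dH(\Lambda_\infty)})=\Phi_H^t\circ\pi$ on $\cE_{\Lambda_\infty}$, which passes to the $\Gamma$-quotient. Through the pullback, the $\Phi_H^t$-invariance of $\tilde\mu_\infty$ translates into invariance of $\mu_\infty$ under the right translation by $\exp(t\,dH(\Lambda_\infty))$. When $\Lambda_\infty$ is regular, \eqref{e:span} simplifies to $\{dH(\Lambda_\infty):H\in\cH\}=\fA$, so $\mu_\infty$ is invariant under $R(\exp(tX))$ for every $X\in\fA$. Since $A$ is connected and $A=\exp\fA$, this upgrades to invariance under the full right action of $A$. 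Note that right translation by $a\in A$ commutes with the right action of $M$ because $M$ centralizes $A$, so $R(a)$ genuinely descends to $\Gamma\backslash G/M$.

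I do not anticipate any substantial obstacle: all the analytic content is already encapsulated in Theorem~\ref{t:prop}, and the present statement is essentially its geometric reformulation in the homogeneous coordinates provided by $\pi$. The only points that require care are invoking the regularity hypothesis at the two places where it is actually needed — first to make $\pi_{\Lambda_\infty}$ a bijection, and second to make the span argument produce all of $\fA$ rather than a proper subspace — and keeping track of the commutations between the left $\Gamma$-action, the right $M$-action, and the right $A$-action when descending to $\Gamma\backslash G/M$.
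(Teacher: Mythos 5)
Your argument is correct and is precisely the symplectic reformulation that the paper has in mind when it writes ``Thus, Theorem~\ref{t:prop} may be rephrased as follows'': transport $\tilde\mu_\infty$ along the diffeomorphism $\pi_{\Lambda_\infty}^{-1}:\cE_{\Lambda_\infty}\to\Gamma\backslash G/M$ (available by regularity), convert $\Phi_H^t$-invariance into $R(e^{t\,dH(\Lambda_\infty)})$-invariance via the conjugation identity, and then use that regularity makes $\{dH(\Lambda_\infty):H\in\cH\}$ span all of $\fA$. The paper also cites an alternative proof via the representation-theoretic lift from~\cite{SilVen1}, but the route you take is the one sketched in the text.
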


This theorem was proved in~\cite[Thm.\ 1.6 (3)]{SilVen1} using the
representation-theoretic lift;  the equivariance of that lift shows that
the construction is compatible with the Hecke operators on
$\Gamma\backslash G$.  It is also shown there that the symplectic
lift $\tilde\mu_\psi$ and the representation theoretic lift $\mu_\psi$
have the same asymptotic behaviour as $\nu$ tends to infinity, and if we
identify $\cE_{\Lambda_\infty}\subset \Gamma\backslash T^* (G/K)$ with
$\Gamma\backslash G/M$.

\begin{definition} We will call any limit point of the sequence
$\tilde\mu_\psi$ (or $\mu_\psi$) a \emph{semiclassical measure} in the direction $\Lambda_\infty$.
\end{definition}
Semiclassical measures in a regular direction are, equivalently, positive measures on $T^*(\Gamma\backslash G/K)$ (carried by a regular energy layer),
positive measures on $\Gamma\backslash G/M$, or positive measures on $\Gamma\backslash G $ (which are $M$-invariant).

\subsection{Entropy bounds}
Our main result is a non-trivial lower bound on the entropy of semiclassical measures.
We fix $H\in\cH$, and we consider the corresponding Hamiltonian flow $\Phi_H^t$ on $\cE_{\Lambda_{\infty}}$,
which has Lyapunov exponents
$$-\chi_J(H) \leq \cdots \leq -\chi_1(H)
    \leq 0
  \leq \chi_1(H) \leq \cdots \leq \chi_J(H).$$
 In addition, the Lyapunov exponent $0$ appears trivially with multiplicity $r$, as a consequence
 of the existence of $r$ integrals of motion. The dimension of $\cE_{\Lambda_{\infty}}$ is $r+2J$.
 The integer $J$, the rank $r$
and the dimension $d$ of $G/K$ are related by $d=J+r$.
In general, the Lyapunov exponents are measurable functions on the
phase space, but here, because of the homogeneous structure,
the Lyapunov exponents are constants.  

In the following theorem we will denote $\chi_{\max}(H) = \chi_J(H)$,
the largest Lyapunov exponent. We denote 
$h_{KS}(\mu, H)$ the Kolmogorov-Sinai entropy of a $(\Phi_H^t)$-invariant
probability measure $\mu$. We recall the Ruelle-Pesin inequality,
$$h_{KS}(\mu, H)\leq \sum_j \chi_j(H),$$
which holds for any $(\Phi_H^t)$-invariant probability measure $\mu$.

\begin{thm} (Symplectic version)\label{t:symplectic} Let $\mu$ be a semiclassical measure
in the direction $\Lambda_\infty$. Assume that $\Lambda_\infty$ is regular.

For $H\in\cH$, we consider the corresponding Hamiltonian flow
$\Phi_H^t$ on $\cE_{\Lambda_{\infty}}$.  Then
\begin{equation}\label{e:proved1}
  h_{KS}(\mu, H) \geq
  \sum_{j:\chi_j(H)\geq\frac{{\chi_{\max}}(H)}{2}}
      \left(\chi_j(H)-\frac{\chi_{\max}(H)}{2}\right).
\end{equation}
\end{thm}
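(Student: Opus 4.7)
The plan is to adapt the strategy of Anantharaman--Nonnenmacher~\cite{AN07, AKN07} from variable negative curvature to the higher-rank locally symmetric setting. Set $\hbar = 1/\lVert\nu\rVert$, choose $\hat H \in \cD$ formally self-adjoint with principal symbol $H$, and let $U^t = e^{it\hat H/\hbar}$ be the quantum propagator on $L^2(\bY)$. By Theorem~\ref{t:prop} the semiclassical measure $\mu$ is $\Phi_H^t$-invariant on the regular energy layer $\cE_{\Lambda_\infty}$, so $h_{KS}(\mu,H)$ is well defined. The Helgason--Fourier transform of $\psi$ plays the role of the classical WKB ansatz: it realizes $\psi$ locally as a superposition of Lagrangian states carried by horocyclic Lagrangians, which is the right class of objects on which to do semiclassical analysis.

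The backbone of the argument is the combination of a quantum hyperbolic dispersion estimate with the Maassen--Uffink entropic uncertainty principle. Pick a smooth $\vareps$-fine partition of unity $(P_\alpha)_{\alpha\in\cI}$ subordinate to $\cE_{\Lambda_\infty}$, quantized as pseudodifferential operators still denoted $P_\alpha$, and form, for symbolic words $\underline\alpha = (\alpha_0,\dots,\alpha_{n-1})$ and a small time step $\tau$,
$$P_{\underline\alpha} \;=\; P_{\alpha_{n-1}}\, U^\tau\, P_{\alpha_{n-2}}\,\cdots\, U^\tau\, P_{\alpha_0}.$$
Following~\cite{AKN07} I would establish a bound of the form
$$\lVert P_{\underline\alpha}\rVert_{L^2\to L^2} \;\leq\; C\,\hbar^{-d/2}\,\exp\!\Bigl(-\tfrac{n\tau}{2}\!\!\!\sum_{\chi_j(H)\geq \chi_{\max}(H)/2}\!\!\!\chi_j(H) + o(n\tau)\Bigr)$$
for $n\tau$ less than an Ehrenfest time $T_E(\hbar) \sim \kappa\lvert\log\hbar\rvert$. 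The exponent reflects the Jacobian of $\Phi_H^{n\tau}$ on the weak-unstable subspace $\bigoplus_{\chi_j(H)\geq \chi_{\max}(H)/2} E^{u,j}$, and the $\chi_{\max}/2$ cutoff is exactly the one isolated in~\cite{AKN07} as the maximal rate at which Lagrangian projections can be controlled before transverse refocusing. A major simplification compared to~\cite{AN07} is that here the stable/unstable foliations are smooth and the Jacobians are \emph{constants}: the Lyapunov exponents $\chi_j(H)$ are precisely the numbers $\alpha(dH(\Lambda_\infty))$ for $\alpha$ in the set of roots $\Delta$ via the restricted root-space decomposition $\fG = \fM\oplus\fA\oplus\bigoplus_{\alpha\in\Delta}\fG_\alpha$.

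Feeding this estimate into the Maassen--Uffink inequality applied to the families $(P_{\underline\alpha})$ and $(U^{T_E} P_{\underline\beta})$, as in~\cite[\S2]{AN07}, produces a lower bound
$$H_n(\lvert\psi\rvert^2) \;\geq\; n\tau\!\!\!\sum_{\chi_j(H)\geq \chi_{\max}(H)/2}\!\!\!\Bigl(\chi_j(H)-\tfrac{\chi_{\max}(H)}{2}\Bigr) + o(n\tau)$$
for the Shannon entropy of the refined partition against the quantum probability $\lvert\psi\rvert^2$. A sub-additivity argument (\cite{An, AN07}), cutting orbits into sub-Ehrenfest blocks to absorb the errors from the breakdown of Egorov's theorem past $T_E$, then converts this bound into the claimed lower bound on $h_{KS}(\mu,H)$ after letting $\hbar\to 0$ and $\vareps\to 0$.

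The hardest step is the dispersion estimate, and two specific difficulties arise from the higher-rank structure. First, the existence of $r$ neutral Lyapunov directions tangent to the $A$-action forces one to work on $\Gamma\backslash G/M$ with partitions transverse to the full flat; otherwise the Jacobian bound would trivially fail. Second, the presence of a whole spectrum of positive exponents, some lying in $(0,\chi_{\max}/2)$, means one must restrict attention to the weak-unstable subspace so that the propagated Lagrangian states remain under geometric control, which is precisely what the $\chi_{\max}/2$ cutoff accomplishes. The root-theoretic identification of Lyapunov exponents makes the target numerology transparent, but turning this algebraic picture into a uniform semiclassical norm bound on $P_{\underline\alpha}$ via propagation of Helgason--Fourier wave packets is where the bulk of the work lies; this step is the direct analogue of the main estimate of~\cite{AN07, AKN07} and inherits its difficulty.
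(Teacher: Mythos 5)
Your high-level plan --- Helgason--Fourier WKB analysis, a hyperbolic dispersion estimate, the Maassen--Uffink entropic uncertainty principle, and a sub-additivity argument to pass to $h_{KS}(\mu,H)$ --- is indeed the structure of the paper's proof: the paper states outright that the passage from Theorem~\ref{t:main} to Theorem~\ref{t:symplectic} is a verbatim repetition of~\cite[\S 2]{AN07}.

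The gap is in the dispersion estimate you assert, and it is fatal to the numerology. You propose a bound of the shape $C\hbar^{-d/2}\exp\bigl(-\tfrac{n\tau}{2}\sum_{\chi_j\geq\chi_{\max}/2}\chi_j + o(n\tau)\bigr)$ with $d=\dim(G/K)$; the correct power in Theorem~\ref{t:main} is $\hbar^{-(J-J_0)/2-c\eps}$, where $J-J_0$ is the number of Lyapunov exponents $\geq\chi_{\max}(H)/2$ counted with multiplicity. In other words, the $\hbar^{-1/2}$ penalty is incurred \emph{only} in the fast unstable directions, not along the $r$ neutral $A$-directions nor in the $J_0$ slowly-expanding directions. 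This reduction from $d$ to $J-J_0$ is precisely the paper's central technical contribution and is what makes the right-hand side of \eqref{e:proved1} nonnegative. With $n\tau\asymp 2|\log\hbar|/\chi_{\max}(H)$, your bound would instead yield
\begin{equation*}
h_{KS}(\mu, H) \geq \sum_{\chi_j\geq\chi_{\max}/2}\Bigl(\chi_j-\frac{\chi_{\max}(H)}{2}\Bigr)-\bigl(d-(J-J_0)\bigr)\frac{\chi_{\max}(H)}{2},
\end{equation*}
which falls short of \eqref{e:proved1} by the positive amount $(r+J_0)\chi_{\max}(H)/2$ and is in fact weaker than what a direct import of~\cite{AN07} would already give (and which the paper points out is often negative). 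The mechanism that recovers the factor $\hbar^{(r+J_0)/2}$ --- a Cotlar--Stein decomposition of $\cP$ in Section~\ref{s:CS}, tiling the slow-unstable and $A$-directions $\lnbs\oplus\fA$ at scale $\hbar^{1/2^{+}}$ and running non-stationary phase in those variables via Lemmas~\ref{l:n-} and~\ref{l:n?} --- is entirely absent from your sketch; without it the argument does not close. A lesser inaccuracy: the $\chi_{\max}(H)/2$ threshold does not come from ``Lagrangian refocusing'' inside the dispersion estimate itself. Theorem~\ref{t:main} is proved with cutoff $\tfrac{1}{2\cK}$ for an arbitrary $\cK$, and the value $\chi_{\max}(H)/2$ emerges only when the Ehrenfest constraint $\cK=1/\chi_{\max}(H)$ is imposed in the entropic-uncertainty step.
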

Continuing with the assumption that $\Lambda_\infty$ is regular, we can transport the theorem
to $\Gamma\backslash G/M$. If we fix a 1-parameter subgroup $(e^{tX})$ of $A$
(with $X\in\fA$), it is well known that the (non trivial) Lyapunov exponents of the flow
$(e^{tX})$ acting on $X/M$ are the real numbers $(\alpha(X))$, where
$\alpha\in \fA^*$ run over the set of roots $\Delta$
(see Section \ref{s:lie} for background related to Lie groups).
If $\alpha$ is a root then so is $-\alpha$ (one of the two will be called
\emph{positive}, the other \emph{negative}).  The notion of positivity
is explained in detail later.  For now it suffices to note that we may
assume that $\alpha(X)\geq 0$ for positive roots $\alpha$.  We write
$\alpha_{\max}(X)$ for $\max_\alpha \alpha(X)$ (this is the largest Lyapunov
exponent of the associated Hamiltonian flow).
Each root occurs with \emph{multiplicity} $m_\alpha$, which must be
taken into account in the statements below (the corresponding Lyapunov
exponent $\alpha(X)$ would be counted repeatedly, $m_\alpha$ times).

\begin{thm}\label{t:main2} (Group-theoretic version)  
Let $\mu$ be a semiclassical measure in the direction $\Lambda_\infty$.
Assume that $\Lambda_\infty$ is regular.

Let $(e^{tX})$ ($X\in\fA$) be a one parameter subgroup of $A$ such that
$\alpha(X)\geq 0$ for all positive roots $\alpha$.

Let $h_{KS}(\mu, X)$ be the entropy of $\mu$ with respect to the flow $(e^{tX})$. Then 
\begin{equation}\label{e:proved}
h_{KS}(\mu, X) \geq
  \sum_{\alpha:\alpha(X)\geq\frac{\alpha_{\max}(X)}{2}}
   m_\alpha \left( \alpha(X)-\frac{\alpha_{\max}(X)}{2}\right).
\end{equation}
\end{thm}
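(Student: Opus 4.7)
The plan is to deduce Theorem \ref{t:main2} from Theorem \ref{t:symplectic} by transporting the statement across the map $\pi_{\Lambda_\infty}:\Gamma\backslash G/M\to\cE_{\Lambda_\infty}$. Since $\Lambda_\infty$ is regular, $\pi_{\Lambda_\infty}$ is a diffeomorphism, and the asymptotic agreement between the symplectic lift $\tilde\mu_\psi$ and the representation-theoretic lift $\mu_\psi$ recalled in the paper lets us view $\mu$ either as a measure on $\cE_{\Lambda_\infty}\subset T^*\bY$ or on $\Gamma\backslash G/M$. The entropy statement will then follow by matching the Hamiltonian flow $\Phi_H^t$ with the right $A$-action for a suitably chosen $H$.

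First, given $X\in\fA$ with $\alpha(X)\geq 0$ on positive roots, I would produce $H\in\cH$ with $dH(\Lambda_\infty)=X$. Such $H$ exists thanks to \eqref{e:span}: regularity of $\Lambda_\infty$ means that $\langle\alpha,\Lambda_\infty\rangle\neq 0$ for every $\alpha\in\Delta$, so the right-hand side of \eqref{e:span} is all of $\fA$. The intertwining $\pi\circ R(e^{t\,dH(\Lambda_\infty)})=\Phi_H^t\circ\pi$ on $\cE_{\Lambda_\infty}$, already recorded in the text, conjugates the flow $R(e^{tX})$ on $\Gamma\backslash G/M$ with $\Phi_H^t$ on $\cE_{\Lambda_\infty}$, so that $h_{KS}(\mu,X)=h_{KS}(\mu,H)$.

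Second, I would match the Lyapunov spectra. In the left-trivialization $T_{gM}(G/M)\simeq\fG/\fM\simeq\fA\oplus\bigoplus_{\alpha\in\Delta}\fG_\alpha$, the derivative of $R(e^{tX})$ is $\Ad(e^{-tX})=e^{-t\ad X}$, acting by $e^{-t\alpha(X)}$ on $\fG_\alpha$ and trivially on $\fA$. Thus the Lyapunov spectrum of $R(e^{tX})$ on $\Gamma\backslash G/M$ consists of $0$ with multiplicity $r$ together with the values $-\alpha(X)$ for $\alpha\in\Delta$, each root counted $m_\alpha$ times. By the $\alpha\mapsto-\alpha$ symmetry of $\Delta$ and the chamber assumption on $X$, the positive part of this spectrum is precisely $\{\alpha(X):\alpha>0,\ \alpha(X)>0\}$ with multiplicities $m_\alpha$, and $\chi_{\max}(H)=\alpha_{\max}(X)$. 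Substituting into \eqref{e:proved1} then gives \eqref{e:proved}, since the condition $\alpha(X)\geq\alpha_{\max}(X)/2>0$ automatically selects only positive roots.

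The main obstacle is that all the genuinely hard analytic work resides in Theorem \ref{t:symplectic}; once that is granted, the proof of Theorem \ref{t:main2} reduces to the essentially bookkeeping argument above. The only points that require some care are the sign of the adjoint action (the factor $e^{-t\ad X}$ rather than $e^{t\ad X}$) and the consistent use of the root multiplicities $m_\alpha$, so that the Weyl-chamber symmetry correctly identifies the two multisets of positive Lyapunov exponents.
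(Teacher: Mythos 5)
Your proof is correct and follows the same route the paper takes: the paper itself only sketches the passage from Theorem \ref{t:symplectic} to Theorem \ref{t:main2} in the two paragraphs preceding the statement (noting that the Lyapunov exponents of $e^{tX}$ on $\Gamma\backslash G/M$ are the $\alpha(X)$ with multiplicity $m_\alpha$), and you fill in exactly the missing bookkeeping. Two minor points you handle correctly but which deserve the emphasis you give them: first, the existence of $H\in\cH$ with $dH(\Lambda_\infty)=X$ does require regularity via \eqref{e:span} (for singular $\Lambda_\infty$ the image of $H\mapsto dH(\Lambda_\infty)$ is a proper subspace of $\fA$); second, although left-trivialization yields $DR(e^{tX})=\Ad(e^{-tX})$ and hence exponents $-\alpha(X)$ on $\fG_\alpha$, the $\alpha\leftrightarrow-\alpha$ symmetry of $\Delta$ ensures the positive part of the spectrum is $\{\alpha(X):\alpha\in\Delta^+\}$ with the stated multiplicities, so that $\chi_{\max}(H)=\alpha_{\max}(X)$ and the substitution into \eqref{e:proved1} gives \eqref{e:proved} verbatim, the constraint $\alpha(X)\geq\alpha_{\max}(X)/2>0$ automatically restricting the sum to positive roots.
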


Our lower bound is positive for all non-zero $X$, in fact greater than
$\frac{\alpha_{\max}(X)}{2}$.
  In~\cite{An, AN07}, the first author and S. Nonnenmacher had conjectured the following stronger bound
$$h_{KS}(\mu, H)\geq\frac12 \sum_{j} \chi_j(H) $$
or equivalently
\begin{equation}\label{e:optim}
h_{KS}(\mu, X)\geq \frac12\sum_{\alpha>0} m_\alpha \cdot \alpha(X) .
\end{equation}
We are still unable to prove it, except in one case: when all the positive
Lyapunov exponents are equal to each other, so that formula \eqref{e:proved}
reduces to \eqref{e:optim}.
One case is that of hyperbolic $d$-space ($G = SO(d,1)$) alluded to above.
Another, the main focus of the present paper, is of the
``extremely irregular'' elements of the torus in $G=SL_n(\IR)$.
These are the elements conjugate under the Weyl group to
$$X=\diag(n-1, -1, ...,-1).$$

\subsection{Application: towards Quantum Unique Ergodicity on locally 
symmetric spaces\label{s:QUE}}
%pb de notation :X

In Section \ref{s:rigid} we combine our entropy bounds with
measure classification results.  Let $n\geq 3$, $G=SL_n(\IR)$,
$\Gamma <G$ a cocompact lattice.
Let $\mu$ be a semiclassical measure on $\Gamma \backslash G$ in the regular direction
$\Lambda_\infty$.

The measure $\mu$ can be written uniquely as a sum of an absolutely continuous
measure and a singular measure (with respect to Lebesgue or Haar measure).
Since $\mu$ is invariant under the action of $A$, the same holds for both components.
Because the Haar measure is known to be ergodic for the action of $A$, the absolutely continuous part of $\mu$ is, in fact,
proportional to Haar measure.  We call
this the \emph{Haar component} of $\mu$.  Its total mass is the \emph{weight}
of this component.
%(this terminology comes from considering the ergodic
%decomposition of $\mu$ \wrt the $A$-action).
 
\begin{thm}\label{t:coroSL3}
Let $n=3$.  Then $\mu$ has a Haar component of weight $\geq \frac14$.
\end{thm}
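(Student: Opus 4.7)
The plan is to combine the entropy lower bound of Theorem~\ref{t:main2} with the measure classification results of Einsiedler--Katok~\cite{EK03}. The intuition is that, in a cocompact quotient of $SL_3(\IR)$, any non-Haar $A$-invariant ergodic probability measure is severely restricted, so the entropy that Theorem~\ref{t:main2} forces onto $\mu$ must be carried by the Haar component.

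First I would take the $A$-ergodic decomposition $\mu = \int \mu_\xi \, d\nu(\xi)$. Each $\mu_\xi$ is $A$-invariant and $A$-ergodic. The measure classification of Einsiedler--Katok, specialized to the cocompact $SL_3(\IR)$ setting, ensures that an $A$-invariant, $A$-ergodic probability measure with positive entropy for some regular one-parameter subgroup of $A$ must be algebraic: supported on a closed orbit of a connected subgroup $L$ with $A \subseteq L \subseteq G$ and finite $L$-invariant volume. In the compact $SL_3$-case, this forces either $L = A$ (a periodic torus orbit, which has zero entropy for every $X$) or $L = G$ (the Haar measure). Consequently every non-Haar $A$-ergodic component $\mu_\xi$ satisfies $h_{\mu_\xi}(X) = 0$ for every regular $X \in \fA$.

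Let $c = \nu\bigl(\{\xi : \mu_\xi \text{ is Haar}\}\bigr)$. By the uniqueness of the Lebesgue decomposition and the ergodicity of Haar, $c$ is precisely the mass of the absolutely continuous part of $\mu$, i.e.\ the weight of its Haar component. The affine property of Kolmogorov--Sinai entropy then gives, for any regular $X$ in the positive Weyl chamber,
\begin{equation*}
h_\mu(X) \,=\, c \sum_{\alpha > 0} m_\alpha \, \alpha(X).
\end{equation*}
Combined with Theorem~\ref{t:main2}, this yields
\begin{equation*}
c \,\geq\, \frac{\sum_{\alpha : \alpha(X) \geq \alpha_{\max}(X)/2} m_\alpha \bigl(\alpha(X) - \alpha_{\max}(X)/2\bigr)}{\sum_{\alpha > 0} m_\alpha \, \alpha(X)}.
\end{equation*}
For $G = SL_3(\IR)$ the positive roots are $\alpha_{12}, \alpha_{13}, \alpha_{23}$, each of multiplicity one. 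Choosing the principal regular element $X = \diag(1, 0, -1)$, one has $\alpha_{12}(X) = \alpha_{23}(X) = 1$ and $\alpha_{13}(X) = 2 = \alpha_{\max}(X)$, so all three positive roots satisfy $\alpha(X) \geq 1$. The numerator of the ratio above equals $0 + 1 + 0 = 1$ while the denominator equals $4$, delivering the claimed bound $c \geq 1/4$.

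The main technical obstacle is justifying that the Einsiedler--Katok classification applies to the $A$-ergodic components produced above: \cite{EK03} is naturally formulated for measures ergodic under a particular regular one-parameter subgroup, so one has to exploit the rank-two structure of $A$ in $SL_3$ together with the cocompactness of $\Gamma$ to rule out intermediate algebraic orbits and upgrade $A$-ergodicity to the form required. A secondary matter is to match the symplectic lift $\tilde\mu_\psi$ (on which Theorem~\ref{t:main2} is formulated) with the representation-theoretic lift $\mu_\psi$ on $\Gamma\backslash G$ (on which the rigidity results operate), which is handled by the equivalence proved in~\cite{SilVen1}.
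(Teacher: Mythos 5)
Your proposal hinges on the claim that every non-Haar $A$-ergodic component of $\mu$ has zero entropy with respect to \emph{every} regular one-parameter subgroup of $A$ --- in particular, that the only $A$-ergodic measures with positive entropy on $\Gamma\backslash \SL_3(\IR)$ are Haar measure and zero-entropy torus orbits. That is not what the measure classification gives for a general cocompact lattice $\Gamma < \SL_3(\IR)$, and it is not what the paper uses. Einsiedler--Katok (Proposition~\ref{p:rigid}) only constrains the pattern of entropy coefficients $s_\alpha(\mu_\xi)$: after closure under addition of roots and the symmetry $s_\alpha = s_{-\alpha}$ for $\SL_n$, the allowed support sets are $R=\Delta$, $R=\emptyset$, and $R=\{\alpha,-\alpha\}$ for each positive root $\alpha$. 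The intermediate components with $R=\{\alpha_{13},-\alpha_{13}\}$ and $R=\{\alpha_{23},-\alpha_{23}\}$ can carry strictly positive entropy with respect to an $X$ with $\alpha(X)\neq 0$, and the paper makes no attempt to rule them out (indeed, for arbitrary cocompact $\Gamma$ they may exist; only for inner-form lattices does the paper invoke algebraicity \cite{EKL06} together with Lemma~\ref{l:inner} to suppress intermediate closed orbits). Once those components are allowed, your identity $h_\mu(X) = c\sum_{\alpha>0} m_\alpha\alpha(X)$ fails, and with it your derivation.

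The actual proof (Proposition~\ref{p:SL3}) is an entropy budget over \emph{all} the allowed $R$-classes, not just $\{\Delta,\emptyset\}$. It deliberately chooses the extremely irregular element $X=\diag(2,-1,-1)$ (rather than the principal $X=\diag(1,0,-1)$ you use), because then $\alpha_{12}(X)=0$ kills the contribution of the $\{\alpha_{12},-\alpha_{12}\}$-components, leaving $3\le h_{KS}(\mu,a)\le 3w_{13}+3w_{23}+6w_\Delta$, and a further symmetrization over the Weyl-conjugate directions $\diag(-1,2,-1)$, $\diag(-1,-1,2)$ shows $w_\Delta\ge w_{13}$ and $w_\Delta\ge w_{23}$, from which $w_\Delta\ge\frac14$ follows. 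Incidentally, a quick sanity check shows your reasoning cannot be right: if your zero-entropy claim held, then plugging in $X=\diag(2,-1,-1)$ into your ratio would yield $c\ge\frac36=\frac12$, a strictly better bound than the paper states --- exactly because you have silently discarded the genuine intermediate-entropy components.
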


\begin{thm}\label{t:coroSL4}
Let $n=4$.  Then either $\mu$ has a Haar component, or each ergodic component
is the Haar measure on a closed orbit of the group
$\left( \begin{array}{cccc} 
* & * &*& 0 \\
* & * &*& 0 \\
* & * &*& 0 \\
 0& 0 &0& * \\
\end{array} \right)$
(or one of its 4 images under the Weyl group), and the components invariant
by each of these 4 subgroups have total weight $\frac14$.
\end{thm}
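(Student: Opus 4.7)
The plan is to apply Theorem~\ref{t:main2} along the four Weyl translates of the ``extremely irregular'' element and combine the resulting entropy bounds with the measure classification theorems of \cite{EK03, EKL06, LW01}. For $i = 1,2,3,4$ let $X^{(i)} \in \fA$ be the traceless diagonal matrix with entry $3$ in position $i$ and $-1$ elsewhere. The roots of $\mathfrak{sl}_4$ are $e_k - e_l$, and $\alpha(X^{(i)})$ takes the value $4$ if $k = i$, $-4$ if $l = i$, and $0$ otherwise. The positive Lyapunov exponents of $X^{(i)}$ are therefore three copies of $4$ and three copies of $0$, all equal, so the estimate \eqref{e:proved} matches the conjectured optimal bound \eqref{e:optim}:
$$ h_{KS}(\mu, X^{(i)}) \geq 6 = \tfrac{1}{2} \sum_{\alpha > 0} m_\alpha\, \alpha(X^{(i)}). $$

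Next I decompose $\mu = \int \mu_\xi\, d\nu(\xi)$ into $A$-ergodic components. By \cite{EKL06, LW01}, every $\mu_\xi$ with positive entropy under some one-parameter subgroup of $A$ is algebraic: Haar measure on a closed orbit of a connected subgroup $H \supset A$. For $G = SL_4(\IR)$ the possibilities for $H$ are, up to Weyl conjugacy, the Levi subgroups: $G$ itself, the four $(3,1)$-block subgroups $H_j$ (each fixing one coordinate axis), the three $(2,2)$-block subgroups, the six block subgroups of type $SL_2 \times A$, and $A$ itself. Pesin's formula is an equality for such algebraic measures; the entropy under $X^{(i)}$ is the sum of the positive eigenvalues of $\ad X^{(i)}$ acting on $\fH$, and summing these over $i = 1,\ldots,4$ yields, respectively, $48$, $24$, $16$, $8$, and $0$ for the five types.

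Let $w_0$, $w^{(3,1)}$, $w^{(2,2)}$, $w^{(2,1,1)}$, $w^{\rm zero}$ be the total weights of these component types, where $w^{\rm zero}$ collects the $A$-orbit components together with any residual ergodic components that have zero entropy for every $X^{(i)}$; these weights sum to $1$. Summing the bound from Step~1 over $i = 1, \ldots, 4$ gives
$$ 48\, w_0 + 24\, w^{(3,1)} + 16\, w^{(2,2)} + 8\, w^{(2,1,1)} \geq 24, $$
which, after using the normalization, rearranges to
$$ 24\, w_0 \geq 8\, w^{(2,2)} + 16\, w^{(2,1,1)} + 24\, w^{\rm zero}. $$
Hence if $w_0 = 0$ all non-$(3,1)$ weights vanish, and every ergodic component is a Haar measure on a closed orbit of some $H_j$. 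Writing $w_j$ for the total weight of the components supported on closed $H_j$-orbits (so $\sum_j w_j = 1$), the individual bound $h_{KS}(\mu, X^{(i)}) = 8(1 - w_i) \geq 6$ gives $w_i \leq \tfrac14$ for each $i$, which together with $\sum_j w_j = 1$ forces $w_i = \tfrac14$ for all $i$.

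The main obstacle lies in Step~2: one must verify that the classification of \cite{EKL06, LW01} produces precisely the list of Levi subgroups above (ruling out any $\Gamma$-specific algebraic subgroups or subtle non-connected phenomena), and that the positive-entropy hypothesis can be inferred for each relevant ergodic component from our entropy bounds on $\mu$ itself. One also needs care in identifying closed orbits of each $H_j$ inside the cocompact quotient and interpreting ``invariant by'' at the level of ergodic components.
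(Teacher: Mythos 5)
Your proof is correct and follows the same strategy as the paper: apply the entropy bound of Theorem~\ref{t:main2} along the Weyl orbit of the extremely irregular element $\diag(3,-1,-1,-1)$ (for which the positive Lyapunov exponents are all equal, so \eqref{e:proved} matches \eqref{e:optim}), and combine with the rigidity structure given by Proposition~\ref{p:rigid} (the entropy of an ergodic component is controlled by the exponents $s_\alpha$, and the admissible sets $R$ of non-vanishing $s_\alpha$ are indexed by partitions of $\{1,\dots,4\}$, so the possible per-component entropy sums over the four conjugates are $48,24,16,8,0$). Your arithmetic is right, and your symmetrized inequality $24w_0 \geq 8w^{(2,2)}+16w^{(2,1,1)}+24w^{\rm zero}$ packages cleanly what the paper carries out by treating one Weyl conjugate and invoking symmetry. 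Two small caveats: you only need the Ruelle--Pesin \emph{inequality} $h_{KS}(\mu_\xi,a)\leq\sum_{\alpha\in R}(\log\alpha(a))^+$ rather than Pesin equality, so the computation and the final bound $6\leq 8(1-w_i)$ should be read as inequalities throughout; and the concern you flag in Step~2 about identifying the ergodic components as Haar measures on closed orbits of block Levi subgroups is indeed where \cite{EKL06} (Thm.\ 1.3), \cite{LW01} and \cite{Tom00} enter, exactly as the paper invokes them in the proof of Proposition~\ref{p:inner}.
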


In fact, the result is slightly stronger: if some ``extremely irregular''
element acts on $\mu$ with entropy strictly larger than half of its entropy
\wrt Haar measure, then there is a Haar component.

It does not seem to be possible to push this technique beyond $SL_4$.
The problem is that there are large subgroups (in the style of those
occuring in Theorem \ref{t:coroSL4}) whose closed orbits support measures
of large entropy.  For particular lattices, however, these large subgroups
do not have closed orbits, so the only possible non-Haar components have
small entropy and cannot account for all the entropy.  For co-compact
lattices this occurs, for example, when $\Gamma$ is the set of elements
of reduced norm $1$ of an order in a central division algebra over $\IQ$,
or more generally for any lattice commensurable with one obtained this way
(we say that $\Gamma$ is \emph{associated} to the division algebra).
Such lattices are said to be of ``inner type'' since they correspond to
inner forms of $SL_n$ over $\IQ$ (there also exist non-uniform lattices
of inner type, corresponding to central simple $\IQ$-algebras which are
not division algebras).  For a brief description of the construction and
references see Section \ref{s:rigid}.

\begin{thm}\label{t:coroinner}
For $n\geq 3$ let $\Gamma < \SL_n(\IR)$ be a lattice associated to a 
division algebra over $\IQ$, and let $\mu$ be a semiclassical measure
on $\Gamma \backslash \SL_n(\IR)$ in a regular direction.
Then $\mu$ has a Haar component of weight
$\geq \frac{\frac{n+1}{2}-t}{n-t} > 0$
where $t$ is the largest proper divisor of $n$.
\end{thm}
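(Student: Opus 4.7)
The plan is to apply Theorem~\ref{t:main2} at the extremely irregular element $X = \diag(n-1,-1,\ldots,-1) \in \fA$, decompose $\mu$ into its $A$-ergodic components, and combine the resulting entropy bound with the measure-classification theorems of \cite{EK03, EKL06} and the arithmetic structure of $\Gamma$. For this $X$, the positive roots $\alpha_{ij} = e_i - e_j$ (all of multiplicity one) satisfy $\alpha_{1j}(X) = n$ for $j \ge 2$ and $\alpha_{ij}(X) = 0$ for $2 \le i < j$, so $\alpha_{\max}(X) = n$. Only the $n-1$ roots $\alpha_{1j}$ contribute to the sum in \eqref{e:proved}, giving
\begin{equation*}
h_{KS}(\mu, X) \;\ge\; (n-1)\bigl(n - \tfrac{n}{2}\bigr) \;=\; \tfrac{n(n-1)}{2},
\end{equation*}
which is exactly half of the Haar entropy $h_{\rm Haar}(X) = \sum_{\alpha>0} m_\alpha \alpha(X) = n(n-1)$.

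Write $\mu = c\,m_{\bX} + (1-c)\,\mu'$ with $c$ the Haar-component weight and $\mu'$ singular with respect to Haar; by affinity of entropy under the $A$-ergodic decomposition, $h_{KS}(\mu, X) = c\,h_{\rm Haar}(X) + (1-c)\,h_{KS}(\mu', X)$. By the measure classification of \cite{EK03, EKL06}, each non-Haar $A$-ergodic component with positive entropy for $X$ is the Haar measure on a closed orbit $\Gamma g L_I$ of an intermediate Levi subgroup $L_I = S(\prod_i GL_{n_i})$. The division-algebra hypothesis now enters crucially: for $\Gamma$ associated to a degree-$n$ division algebra $D/\IQ$, a closed $L_I$-orbit exists only when $L_I$ admits a $\IQ$-form adapted to the $D$-module structure on the underlying $n$-dimensional space. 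Such a form arises from a tensor decomposition $D = D' \otimes_{\IQ} D''$ attached to a proper sub-division-algebra of degree $a \mid n$, $a < n$, which upon tensoring with $\IR$ forces the partition $I$ to consist of $a$ equal blocks of size $n/a$; partitions with unequal parts cannot occur, because a division algebra admits no non-trivial idempotents.

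For such a partition, a direct root-space count in $L_I$ gives the entropy of Haar on $\Gamma g L_I$ with respect to $X$: after Weyl conjugation placing the distinguished index $1$ inside one block, the only contributing positive roots of $L_I$ are $e_1 - e_j$ with $j$ in the same block (of size $n/a$), each with $\alpha(X) = n$, so the entropy equals $n\,(n/a - 1)$. Maximising over proper divisors $a \mid n$ (i.e., taking $n/a = t$, the largest proper divisor of $n$) gives $h_{KS}(\mu', X) \le h_{\max} := n(t - 1)$. Substituting into the entropy identity and rearranging,
\begin{equation*}
\tfrac{n(n-1)}{2} \;\le\; c\cdot n(n-1) + (1-c)\cdot n(t-1) \;\;\Longrightarrow\;\; c \;\ge\; \frac{(n+1)/2 - t}{n - t},
\end{equation*}
which is strictly positive since $t \le n/2$. \emph{The main obstacle} is the combination of measure classification and arithmetic input in the middle step: one must marry the EKL classification of $A$-invariant ergodic measures with the rigidity of division algebras to exclude all intermediate Levis with unequal block sizes (precisely those which, for general cocompact $\Gamma$, carry the obstructing non-Haar components seen in Theorems~\ref{t:coroSL3} and~\ref{t:coroSL4}).
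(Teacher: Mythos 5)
Your proposal is correct and takes essentially the same route as the paper's proof of Proposition~\ref{p:inner}: the same choice of test element $X=\diag(n-1,-1,\dots,-1)$, the same application of Theorem~\ref{t:main2} to get $h_{KS}(\mu,X)\ge\frac{n(n-1)}{2}=\frac12 h_{KS}(\mu_{\rm Haar},X)$, the same entropy bound $n(t-1)$ on non-Haar ergodic components, and the same final linear inequality in the Haar weight. The one place where you sketch rather than prove is the step where the division-algebra structure is used to force equal block sizes; your ``tensor decomposition $D=D'\otimes D''$'' heuristic is loose and not how the paper proceeds. The paper instead applies~\cite[Thm.~1.3]{EKL06} after substituting their Lemma~5.2 (which is specific to $SL_n(\IZ)$) by the paper's Lemma~\ref{l:inner} on inner-type lattices, and then cites~\cite{LW01} and~\cite[Thm.~1.2, \S4.2]{Tom00} to pin down the reductive $H$ as a conjugate of $GL_k(\IR)^l\cap SL_n(\IR)$ with $n=kl$. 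You correctly identify this as ``the main obstacle,'' but to make the argument complete you would need to fill in that arithmetic step with a precise citation or proof rather than the idempotent/tensor heuristic.
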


It is not surprising that strongest implication is for $n$ prime
(so that there are few intermediate algebraic measures).
Indeed, setting $t=1$ we find $w_\Delta \geq \frac12$ in that case.
However for $n$ prime Silberman-Venkatesh~\cite{SilVen2} show that
the semiclassical measures associated to {\em Hecke eigenfunctions} are
equal to Haar measure.  The main impact of Theorem \ref{t:coroinner}
is thus when the $n$ is composite, where previous methods only showed
that semiclassical measures are convex combinations of algebraic
measures but could not establish that Haar measure occurs in the combination.

\begin{rem} We compare here our result with that of~\cite{SilVen2}.
That paper studies the case of lattices in $G=PGL_n(\IR)$ associated to
division algebras of prime degree $n$ and joint eigenfunctions
of $\cD$ and of the Hecke operators.  It is then shown that
\emph{any ergodic component of a semiclassical measure $\mu$}
has positive entropy; it follows that $\mu$ must be the Haar measure.
Our result is neither stronger nor weaker:

\begin{itemize}

\item We cannot prove that all ergodic components of $\mu$ have positive
entropy, only that the total entropy of $\mu$ is positive. Hence, we
are not able to exclude components of zero entropy;

\item On the other hand, our lower bound on the total entropy
($1/2$ of the maximal entropy) is explicit and quite strong. This allows
to detect the presence of a Haar component in a variety of cases;

\item In particular, for $n=3$ we do not need any assumption on the cocompact
lattice $\Gamma$; and for $\Gamma$ associated to a division algebra,
our result holds for all $n$.

\item The Hecke-operator method applies more naturally to adelic
quotients $\IG(\IQ) \backslash \IG(\IA) / K_\infty K_\textrm{f}$.  When
$G$ is a form of $SL_n$ there is no distinction, but when $G = PGL_n$
the adelic quotients are typically \emph{disjoint unions} of quotients
$\Gamma \backslash G$.  Even when the quotient is compact, $G$-invariance
of the limit measure does not show that all components have the same
proportion of the mass.  Our result applies to each connected component
separately.

\item We do not assume that our eigenfunctions are also eigenfunctions
of the Hecke operators: this means that multiplicity of eigenvalues
is not an issue in this work.

\item The methods of Silberman-Venkatesh apply to non-cocompact lattices
as well.

\end{itemize}
\end{rem}

\subsection{Hyperbolic dispersive estimate}
	The proof of Theorem \ref{t:symplectic} (and \ref{t:main2}) follows the main ideas of~\cite{AN07}, with a major difference which lies in an improvement of the ``hyperbolic dispersive estimate''~: \cite[Thm.\ 1.3.3]{An} and \cite[Thm.\ 2.7]{AN07}. If we applied directly the result of~\cite{AN07},
	we would get
	$$h_{KS}(\mu, H)\geq \sum_{k }\left(\chi_k(H)-\frac{\chi_{\max}(H)}2\right).$$ This inequality is often trivial (the right-hand term being negative)
	whereas in \eqref{e:proved1} we managed to get rid of the negative terms $\left(\chi_k(H)-\frac{\chi_{\max}(H)}2\right)$.
	 
Since the ``hyperbolic dispersive estimate'' has an intrinsic interest, and is the core of this paper, we state it here as one of our main results. We fix a quantization procedure, set at scale $\hbar=\norm{\nu}^{-1}$, that associates to any reasonable function $a$ on $T^*\bY$ an operator $\Op_\hbar(a)$ on $L^2(\bY)$. An explicit construction is given in Section \ref{s:lift}. In particular, it is useful to know that  $\Op_\hbar$ can be defined so that, if $H\in\cH$ is real valued, $\Op_\hbar(H)$ is a self-adjoint operator belonging to $\cD$.  More explicitly, $\Op_\hbar(H)$ is defined so that $\Op_\hbar(H)\psi_\nu=H(-i\hbar \nu)\psi_\nu$ for any $\cD$-eigenfunction $\psi_\nu$, with spectral parameter $\nu$ (hence the choice of the normalisation $\hbar=\norm{\nu}^{-1}$).  

	Let $(P_k)_{k=1,\ldots,K}$ be a family of smooth real functions on $\bY$, such that 
	\begin{equation}\label{e:partition}
	\forall x\in \bY,\qquad \sum_{k=1}^K P_k^2(x)= 1\,\,.
	\end{equation}
	We assume that the diameter of the supports of the functions $P_k$ is small enough.
	We will also denote $P_k$
	the operator of multiplication by $P_k(x)$ on the Hilbert space $L^2(\bY)$. 
	
		We denote $U^t=\exp(i\hbar^{-1} t\Op_\hbar(H))$ the propagator of the ``Schr\"odinger equation'' generated by the Hamiltonian $H$. This is a unitary Fourier Integral
	Operator associated with the classical Hamiltonian flow $\Phi_H^{-t}$.  The $\hbar$-dependence of $U$ will be implicit in our notations. We fix a small discrete time step $\eta$.

	Throughout the paper we will use the notation $\widehat A(t)=U^{-t\eta}\widehat A U^{t\eta}$ for the quantum evolution at time $t\eta$
	of an operator $\widehat A$.	  
 	For each integer $T\in\IN$ and any 
	sequence of labels ${\bom}=(\om_{-T},\cdots, \om_{-1},\om_0, \cdots \om_{T-1})$, 
	$\om_i\in [1,K]$ (we say that the sequence ${\bom}$ is of {\em length} $|{\bom}|=2T$),
	we define the operators
	\begin{equation} 
	\begin{split}
	%P_{{\bom}}&=P_{\omega_n} U P_{\omega_{n-1}}\ldots U P_{\omega_0}\\
	P_{{\bom}}=P_{\omega_{T-1}}(T-1)P_{\omega_{T-2}}(T-2)\ldots P_{\omega_0}
	P_{\om_{-1}}(-1)\ldots P_{\omega_{-T}}(-T)
\,\,.%\label{e:tP_bep}
	\end{split}
	\end{equation}

	We fix a smooth, compactly supported function $\chi$ on $T^*\bY$, supported
	in a tubular neighbourhood of size $\eps$ of the energy layer $\cE_{\Lambda_\infty}$ (which is assumed to be regular); and we define
	\begin{equation}\label{e:P_bep}
	\begin{split}
	%P_{{\bom}}&=P_{\omega_n} U P_{\omega_{n-1}}\ldots U P_{\omega_0}\\
	P_{{\bom}}^\chi=P_{\omega_{T-1}}(T-1)P_{\omega_{T-2}}(T-2)\ldots P^{1/2}_{\omega_0}
	\Op(\chi)P^{1/2}_{\omega_0}
	P_{\om_{-1}}(-1)\ldots P_{\omega_{-T}}(-T)
\,\,.%\label{e:tP_bep}
	\end{split}
	\end{equation}
The operator $P_{{\bom}}^\chi$ should be thought of as $P_{{\bom}}$ restricted to a spectral
window around the energy layer $\cE_{\Lambda_{\infty}}.$

	%%%%%%%%%%%%%%%%%%%%%%%%%%%%%%%%
	 \begin{thm} \label{t:main} Fix $H\in\cH$, and a time step $\eta$, small enough.
	  Let $\cK>0$ be fixed, arbitrary.  Let $\chi \in \cC^\infty(T^*\bY)$, supported
	in a tubular neighbourhood of size $\eps$ of the regular energy layer $\cE_{\Lambda_\infty}$. Assume that $\eps$, as well as the diameters of the supports of each $P_k$, are small enough. 

	Then, there exists $\hbar_\cK>0$ such that, for all $\hbar\in(0, \hbar_\cK)$, for $T=\lfloor\frac{\cK|\log\hbar|}\eta\rfloor$, and for every sequence ${\bom}$ of length $T$,
	\begin{equation}\label{e:mainineq}
	\norm{P_{\bom}^\chi}
	 \leq C\,\,\hbar^{-c\eps} \prod_{ k,\, \chi_k(H)\geq \frac1{2\cK} } \frac{e^{-T\eta\,\chi_k(H)}}{\hbar^{1/2}}\,\,
	\end{equation}
	where the $\chi_k(H)$ denote the Lyapunov exponents of $\Phi_H^t$ on the energy layer $\cE_{\Lambda_\infty}$.
	The constant $C$ does not depend on $\cK$ nor on $H$, whereas $c$ does.
	 \end{thm}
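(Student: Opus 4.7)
The plan is to bound $\|P_{\bom}^\chi\|$ by an Ehrenfest-time WKB analysis, the essential novelty over \cite{AN07} being a \emph{fast/slow} decomposition of the horospherical directions.

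\emph{Step 1: WKB reduction to Lagrangian states.} Using the Helgason--Fourier transform on $\bS$, decompose an arbitrary $\hbar$-oscillatory test function in $L^2(\bY)$ as a continuous superposition of Lagrangian states $\psi_{\nu,gM}(x) = e^{i\hbar^{-1} B(x,\nu,gM)} a(x)$, where $B$ is the Busemann phase and $(\nu, gM)\in i\fA^*\times G/M$ parametrizes the horospherical foliation of $\bS$. The spectral cutoff $\Op(\chi)$ localizes $\nu$ in an $\eps$-tube around $\Lambda_\infty$ and the innermost factor $P_{\omega_0}^{1/2}$ restricts the base point to a small ball of $\bY$, so it suffices to bound $\|P_{\bom}^\chi \psi_{\nu, gM}\|_{L^2}$ uniformly as $(\nu, gM)$ ranges over this compact parameter set.

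\emph{Step 2: Propagation and horocyclic stretching.} The propagator $U^{t\eta}$ is a Fourier integral operator associated with $\Phi_H^{-t\eta}$; a standard WKB/Hamilton--Jacobi argument shows that applying $U^{-t\eta}P_{\omega_i}U^{t\eta}$ to a Lagrangian state produces again a Lagrangian state whose underlying submanifold is $\Phi_H^{t\eta}$ applied to the initial horosphere, and whose amplitude is supported in $\Phi_H^{-t\eta}(\supp P_{\omega_i})$. Iterating $2T$ times and using the uniform hyperbolicity of the flow on the regular shell $\cE_{\Lambda_\infty}$, the support of the resulting amplitude in the unstable direction corresponding to a positive root $\alpha$ is compressed by a factor $e^{-T\eta\,\alpha(X)}$, with $X=dH(\Lambda_\infty)\in\fA$. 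Combining this support restriction with a Schur / stationary-phase estimate yields a dispersive gain $e^{-T\eta\,\alpha(X)}/\hbar^{1/2}$ per unstable direction, \emph{provided} the exponential gain actually beats the $\hbar^{-1/2}$ loss inherent to integrating across the transverse direction.

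\emph{Step 3: Fast/slow decomposition of the root spaces.} Partition the roots into \emph{fast} ($\alpha(X)\geq\frac{1}{2\cK}$) and \emph{slow} ($\alpha(X)<\frac{1}{2\cK}$, including exponents very close to zero), decomposing $\lien=\lnf\oplus\lns$ and $\lienb=\lnbf\oplus\lnbs$ accordingly, and work in horospherical coordinates on $\bY$ adapted to the product decomposition $\bN=\Nbf\cdot(\bN/\Nbf)$. In the fast factors the Step~2 gain $e^{-T\eta\,\alpha(X)}/\hbar^{1/2}$ really beats the $\hbar^{-1/2}$ loss when $T\eta\asymp\cK|\log\hbar|$, and the product of these gains reproduces the right-hand side of \eqref{e:mainineq}. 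In the slow factors the dispersive estimate gives no useful decay (this is exactly why \cite{AN07} degenerates when some Lyapunov exponents are small), so we discard it and retain only the raw support restriction imposed by the $P_{\omega_i}$ together with the $\eps$-thickness of the energy shell; volume bounds then yield only a polynomial loss $\hbar^{-c\eps}$ that is absorbed in the prefactor.

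The main obstacle, and the place where this really improves on \cite{AN07}, is Step~3: to carry out the WKB estimate factor by factor one needs a coordinate system whose Jacobians decouple into fast and slow blocks under iteration of $\Phi_H^{t\eta}$. Non-commutativity of $\bN$ produces cross terms between $\lnf$ and $\lns$ upon repeated propagation, and controlling these requires both the smoothness of the unstable foliation and the fact that $\lnf$ is an $\Ad(A)$-stable ideal of $\lien$ (so that $\Nf\triangleleft\bN$). Once this decoupling is arranged, the slow directions can be handled by volume estimates rather than by chasing phase cancellations, which is precisely what allows the product in \eqref{e:mainineq} to be restricted to the fast Lyapunov exponents and eliminates the spurious negative terms present in the earlier bound.
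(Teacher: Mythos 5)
Your proposal correctly identifies the key conceptual innovation over \cite{AN07}: splitting the root spaces into fast directions ($\alpha(X)\geq\frac{1}{2\cK}$) and slow ones, extracting the dispersive gain only from the fast block while giving up on any gain from the slow block, and using the fact that $\lnf$ is an $\Ad(A)$-stable ideal so that $\Nf\triangleleft N$. Your Step~2 support restriction (the amplitude is compressed by $e^{-T\eta\alpha(X)}$ in the direction of $\fG_\alpha$) is also exactly what the paper proves in Proposition~\ref{p:asymp}. However, there is a genuine gap in the way you pass from this support information to the operator-norm bound, and it is precisely the part of the argument that occupies Section~\ref{s:CS} of the paper.

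The problem is in Steps 1 and 3. In Step~1 you reduce to bounding $\|P_{\bom}^\chi\psi_{\nu,gM}\|$ ``uniformly as $(\nu,gM)$ ranges over this compact parameter set.'' This is not sufficient: the Lagrangian states $e_{i\hbar^{-1}\lambda,\theta}$ form a \emph{continuous} overcomplete family, and the Plancherel measure $|c_\hbar(\lambda)|^{-2}d\lambda\,d\theta$ concentrates a factor $\hbar^{-d}$ on the relevant shell. A triangle-inequality (or sup) reduction over this family loses $\hbar^{-d/2}$ outright, and a Schur test is not directly available because the kernel is not absolutely integrable at this scale. Correspondingly, in Step~3 your assertion that the slow directions contribute only a ``polynomial loss $\hbar^{-c\eps}$'' via ``volume bounds'' is unjustified: the slow block has dimension $J_0$, the slow support has size $\sim\eps$, and the Plancherel density contributes another $\hbar^{-1}$ per slow coordinate, so a crude volume count would lose $\hbar^{-J_0/2}$, not merely $\hbar^{-c\eps}$. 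The $\hbar^{-c\eps}$ loss in the theorem has a different origin (the spread $|\alpha(X_\lambda)-\alpha(X_{\Lambda_\infty})|\leq\eps$ over the $\eps$-tube, and the $2^+$ versus $2$ scale), not a naive volume estimate on the slow block.

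What is actually needed — and what the paper does — is an almost-orthogonality argument. After the WKB Ansatz, $\cP_m$ is broken into a countable family $\cP_{m,(\bar y,y,t,\lambda_0)}$ indexed by cells of side $\hbar^{1/2^+}$ in the slow coordinates $(\lnbs,\lns)$, the $A$-coordinate, and the spectral parameter. The Cotlar--Stein lemma then needs off-diagonal decay of $\|\cP^*_{m,(\bar x,\dots)}\cP_{m,(\bar y,\dots)}\|$, which is supplied by non-stationary phase estimates \emph{in the slow directions} (Lemmas~\ref{l:n-} and \ref{l:n?}, using the variation $\Psi^\tau$ built from the implicit-function map $\Psi_{na}$). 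Crucially, this works precisely because the slow directions expand so little over the time window $T\eta\lesssim\cK|\log\hbar|$ that the amplitude $A_M^{(T)}$ stays smooth at scale $\hbar^{1/2^+}$ there, so each integration by parts gains $\hbar^{1-2/2^+}$; in the fast directions the amplitude develops $\hbar^{-O(1)}$-sized derivatives and the same integration by parts would fail. Without this almost-orthogonality mechanism your outline does not yield a norm bound at all. You should either invoke the Cotlar--Stein argument explicitly with the non-stationary-phase input in the slow block, or replace it by a $TT^*$ argument with the analogous kernel decay estimate; the intuition you give (``the decoupling of fast/slow blocks is the main obstacle'') is pointing in the right direction, but the decoupling has to be implemented as an orthogonality statement, not a coordinate choice.
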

	%%%%%%%%%%%%%%%%%%%%%%%%%%%%%%%%

	 The method used in \cite{AN07} only yielded the upper bound:
	\begin{equation}\label{e:notsogood}
	\norm{P_{\bom}^\chi}
	 \leq C\,\,\hbar^{-c\eps} \prod_{ k } \frac{e^{-T\eta\,\chi_k(H)}}{\hbar^{1/2}}\,\,
	\end{equation}
	This is clearly not optimal when $\Phi_H^t$ has some neutral, or slowly expanding directions. For instance, if $H=0$
	then $\Phi_H^t=I$ has only neutral directions. In this case, \eqref{e:notsogood} reads
	\begin{equation}  \norm{P_{\bom}^\chi}
	 \leq C\,\,\hbar^{-\frac{d}2-c\eps}
	  ,
	\end{equation}
	where $d$ is the dimension of $\bY$,
	which is obviously much worse (for any $T$) than the trivial bound
	\begin{equation} \label{e:trivial}  \norm{P_{\bom}^\chi}
	 \leq 1.
	\end{equation}

	On the other hand, if some of the $\chi_k(H)$ are (strictly) positive, then \eqref{e:notsogood} is much better
	than the trivial bound \eqref{e:trivial}, for very large $T\eta$. The bound given by Theorem \ref{t:main} interpolates between the two, for $T\eta\sim \cK |\log\hbar|$. 
	
	The proof of the hyperbolic dispersion estimates is quite technical, and occupies Sections \ref{s:lift}, \ref{s:WKB}, \ref{s:CS}. It uses a version of the pseudodifferential calculus adapted
	to the geometry of locally symmetric spaces, based on Helgason's version of the Fourier transform for this spaces, and inspired by the work of Zelditch in the case of $G=SL(2, \R)$ \cite{Zel86}.
	We point out the fact that an alternative proof of Theorem \ref{t:main} is given in \cite{A-add},
	based on more conventional Fourier analysis. The reader might prefer to read \cite{A-add}
	instead of Sections \ref{s:lift}, \ref{s:WKB}, \ref{s:CS}, however we feel that the two techniques have an interest of their own.
	
We will not repeat here the argument that leads from Theorem \ref{t:main} to the entropy bound Theorem \ref{t:symplectic}; it would be an exact repetition of the argument given in \cite[\S2]{AN07}. Let us just 
make one comment~: in this argument, we are limited to $\cK=\frac{1}{\chi_{\max}(H)}$ (the time $T_E= \frac{|\log\hbar|}{\chi_{\max}(H)}$ is sometimes called the Ehrenfest time for the Hamiltonian $H$, and corresponds to the time where the approximation of the quantum flow $U^t$ by the classical flow $\Phi_H^t$ breaks down). This means that we eventually keep the Lyapunov exponents such that $\chi_k(H)\geq \frac{\chi_{\max}(H)}2,$ and explains why this restriction appears in \eqref{e:proved1}.

%{\bf Acknowledgements:}  
 %This work was partially supported by the grant ANR-05-JCJC-0107-01.
%I am also grateful to the Miller Institute for Basic Research in Science, University of California Berkeley, for supporting my work in the spring 2009.

\section{Background and notation regarding semisimple Lie groups\label{s:lie}}
Our terminology follows Knapp \cite{Knapp86}.
\subsection{Structure}
Let $G$ denote a non-compact connected simple Lie
group with finite center\footnote{If $G$ is semisimple our discussion remains valid, but one can even do something finer, as remarked in~\cite[\S 5.1]{SilVen1}. After decomposing $\fG$ into simple factors $\oplus \fG^{(j)}$, and assuming that the Cartan involution, the subalgebra $\fA$, etc. are compatible with this decomposition, one can decompose the spectral parameter $\nu$ into its components $\nu^{(j)}\in \fA^{(j)*}$. Instead of assuming that $\norm{\nu}\To +\infty$ and
$\frac{\nu}{\norm{\nu}}$ has a regular limit $\nu_\infty$, one can assume the same independently for each component $\nu^{(j)}$. This means that we do not have to assume that all the norms $\norm{\nu^{(j)}}$ go to infinity at the same speed.}.
We choose a Cartan involution $\Theta$ for $G$, and let $K < G$ be the
$\Theta$-fixed maximal compact subgroup. Let $\fG = Lie(G)$, and let $\theta$
denote the differential of $\Theta$, giving the Cartan
decomposition $\fG= \fK\oplus \fP$ with $\fK = Lie(K)$. 
Let $\bS = G/K$ be the symmetric space, with
$o=eK\in \bS$ the point with stabilizer $K$. We fix a $G$-invariant metric on
$G/K$: observe that the tangent space at the point $o$ is naturally
identified with $\fP$, and endow it with the Killing form.
For a lattice $\Gamma < G$ we write $\bX =\Gamma\backslash G$ and
$\bY = \Gamma\backslash G/K$, the latter being a
locally symmetric space of non-positive curvature. In this paper, we shall always assume that $\bX$ and $\bY$ are compact.
%We normalize the Haar
%measures $dx$ on $\bX$, $dk$ on $K$ and $dy$ on $\bY$ to have total mass $1$
%(here $dy$ is the pushforward of $dx$ under the projection from $\bX$ to $\bY$,
%given by averaging with respect to $dk$). 

Fix now a maximal abelian subalgebra $\fA\subset\fP$.

We denote by $\fA_{\IC}$ the complexification $\fA\otimes \IC$.
We denote by $\fA^*$ (resp. $\fA_{\IC}^*$) the real dual
(resp. the complex dual) of $\fA$.
For $\nu\in \fA_{\IC}^*$, we define $\Re e(\nu), \Im m(\nu) \in \fA^*$
to be the real and imaginary parts of $\nu$, respectively.
For $\alpha\in\fA^*$,  set $\fG_\alpha  = \{X\in\fG, \forall H \in \fA : ad(H)X =  \alpha(H)X \}$,
$\Delta=\Delta (\fA: \fG) = \{  \alpha\in\fA^*  \setminus \{0\},  \fG_\alpha\not = \{0\}\}$ and call the latter the (restricted)
roots of $\fG$ with respect to $\fA$. The subalgebra $\fG_0$ is  $\theta$-invariant, and hence $\fG_0=
(\fG_0\cap\fP)\oplus (\fG_0\cap\fK)$. By the maximality of $\fA$ in $\fP$, we must then have $\fG_0 = \fA\oplus \fM$
where $\fM = Z_{\fK}(\fA)$, the centralizer of $\fA$ in $\fK$.

The Killing form of $\fG$ induces a standard inner product $\left\la ., .\right\ra$ on $\fP$, and by duality on $\fP^*$.
By restriction we get an inner product on $\fA^*$  with respect to
which  $\Delta(\fA: \fG) \subset\fA^*$ is a root system. The associated Weyl group, generated
by the root reflections $s_\alpha$, will be denoted $W=W(\fA: \fG)$. This group is
also canonically isomorphic to $N_K(\fA)/Z_K(\fA)$. In what follows we will represent any element $w$ of the Weyl group by a representative in
$N_K(\fA)\subset K$ (taking care to only make statements that do not depend
on the choice of a representative), and the action of $w\in W(\fA:\fG)$ on
$\fA$ or $\fA^*$ will be given by the adjoint representation $\Ad(w)$.
The fixed-point set of any $s_\alpha$ is a hyperplane in $\fA^*$,
called a wall. The connected components of the complement of the union
of the walls are cones, called the (open) Weyl chambers.
A subset $\Pi\subset\Delta(\fA: \fG)$ will be called a system of simple roots
if every root can be uniquely expressed as an integral
combination of elements of $\Pi$ with either all coefficients non-negative or
all coefficients non-positive.
For a simple system $\Pi$, the open cone
$ C_\Pi = \{ \nu\in\fA^*, \forall \alpha\in\Pi : \left\la\nu, \alpha\right\ra>0\}$ is an (open) Weyl chamber.
The closure of an open chamber will be called a closed chamber;
we will denote in particular $\overline{ C_\Pi}  =
\{  \nu\in\fA^*, \forall \alpha\in\Pi : \left\la\nu, \alpha\right\ra\geq 0\}$.  The
Weyl group acts simply transitively on the chambers and simple systems.
The action of $W(\fA: \fG)$ on $\fA^*$ extends in the complex-linear
way to an action on $\fA_{\IC}^*$ preserving $i\fA^*\subset\fA_{\IC}^*$,
and we call an element $\nu\in\fA_{\IC}^*$ regular if it is fixed by
no non-trivial element of $W(\fA: \fG)$.  Since $-C_\Pi \subset \fA^*$ is a chamber,
there is a unique $\wl\in W(\fA: \fG)$, called the ``long element'',
such that $\Ad(\wl).C_\Pi=- C_\Pi$.  Note that $\wl^2 \C_\Pi = C_\Pi$
and hence $\wl^2=e$.  Also, $\wl$ depends on the choice of $\Pi$ but
we suppress this from the notation.

Fixing a simple system $\Pi$ we get a notion of positivity.
We will denote by $\Delta^+$ the set of positive roots,
by $\Delta^-=-\Delta^+$ the set of negative roots.
We use $\rho= \frac12\sum_{\alpha>0} (\dim \fG_\alpha)\alpha\in\fA^*$
to denote half the sum of the positive roots.
For $\fN = \oplus_{\alpha >0}\fG_\alpha$ and
$\lienb = \Theta\fN = \oplus_{\alpha <0}\fG_\alpha$ we have
$\fG = \lien \oplus \fA \oplus \fM \oplus \lienb $.
Note that $\lienb =\Ad(\wl).\fN$.
We also have (``Iwasawa decomposition'') $\fG = \fN \oplus \fA \oplus \fK$.
We can therefore uniquely write every $X\in\fG$ in the form
$X = X_\fN + X_\fA + X_\fK$. We also write $H_0(X)$ for $X_\fA$.

Let $N, A, \overline{N} < G$ be the connected subgroups corresponding to the subalgebras
$\fN, \fA, \lienb \subset\fG$ respectively, and let $M = Z_K(\fA)$.
Then $\fM = Lie(M)$, though $M$ is not
necessarily connected. Moreover $P_0 = NAM$ is a minimal parabolic subgroup of
$G$, with the map $N \times A \times M \To P_0$ being a diffeomorphism.
The map $N \times A \times K \To G$ is a (surjective) diffeomorphism
(Iwasawa decomposition), so for $g\in G$ there exists a unique $H_0(g)\in\fA$
such that $g = n \exp(H_0(g))k$ for some $n\in N$,
$k\in K$. The map $H_0 : G \To \fA$ is continuous; restricted to $A$, it is the
inverse of the exponential map.

We will use the $G$-equivariant identification
between $G/M$ and $G/K\times G/P_0$, given by $gM\mapsto (gK, gP_0)$.
The quotient $G/P_0$ can also be identified with $K/M$. 

Starting from $H_0$ we define a ``Busemann function'' $B$ on
$G/K\times G/P_0\sim G/M$:
\begin{equation}\label{e:buse} B(gK, g_1P_0)=H_0(k^{-1}g), \end{equation}
where $k$ is the $K$-part in the $KAN$ decomposition of $g_1$ (if $g_1$ is defined modulo $P_0$, then $k$ is defined modulo $M$).
Equivalently, if $gM\in G/M$, we have $B(gM)=a$, where $g=kna$ is the $KNA$ decomposition of $g$ (if $g$ is defined modulo $M$, then $a$ is uniquely defined and $k$ is defined modulo $M$).

In $G/K$, a ``flat'' is a maximal flat totally geodesic submanifold.
Every flat is of the form $\{gaK, a\in A\}$
for some $g\in G$. The space of flats can be naturally identified with $G/MA$,
or with an open dense subset of $G/P_0\times G/\bPz$, via the $G$-equivariant map
$$gMA\mapsto (gP_0, g\bPz)$$
where $\bPz=MA\bN=\wl P_0 \wl^{-1}.$
We will also use the following injective map from $G/MA$ into
$G/P_0\times G/ P_0$, $$gMA\mapsto (gP_0, g \wl P_0).$$
Its image is an open dense subset of $G/P_0\times G/ P_0$, namely $\{(g_1 P_0, g_2 P_0),
g^{-1}_2 g_1\in P_0 \wl P_0\}$. Finally we recall the Bruhat decomposition
$G=\sqcup_{w\in W(\fA: \fG)} P_0 w P_0$, with $P_0 \wl P_0$ being an open dense subset (the ``big cell'').

\subsection{The universal enveloping algebra; Harish-Chandra isomorphisms\label{s:HC}}
We analyze the structure of $\cD$ by comparing it with other algebras of
differential operators.
For a Lie algebra $\fS$ we write $\fS_\C$ for its complexification
$\fS\otimes_\IR \IC$.  In particular, $\fG_\C$ is a complex semisimple
Lie algebra. We fix a maximal abelian subalgebra $\fB\subset\fM$ and let
$\fH=\fA\oplus\fB$. Then $\fH_\C$ is a Cartan subalgebra of $\fG_\C$,
with an associated root system $\Delta(\fH_\C : \fG_\C)$ satisfying
$\Delta(\fA: \fG) = \{\alpha_{| \fA}\}_{\alpha\in\Delta(\fH_\C : \fG_\C) }\setminus\{0\}$.

If $\fS_\C$ is a complex Lie algebra, we denote by $U(\fS_\C)$ its
universal enveloping algebra; $U(\fG_\C)$ is isomorphic to the algebra
of left-$G$-invariant differential operators on $G$ with complex coefficients
\cite{Gode}.

There is an isomorphism, called the Harish-Chandra isomorphism, between the algebra $\cD$ of $G$-invariant
differential operators on $G/K$ and the algebra $\cD_W(A)$ of
$A$- and $W$-invariant differential operators on $A\sim\IR^r$.
The latter is obviously isomorphic to $U(\fA_\IC)^W$,
the subalgebra of $U(\fA_\IC)$ formed of $W$-invariant elements. Since $\fA_\C$ is abelian, $U(\fA_\IC)$ is can be identified to the space of
polynomial functions on $\fA^*$ with complex coefficients.

The Harish-Chandra isomorphism $\Gamma: \cD\To \cD_W(A)$ can be
realized in a geometric way as follows \cite[Cor.\ II.5.19]{Hel}.
Consider the flat subspace $A.o\subset G/K$, naturally identified
with $A$. Fixing $D\in\cD$, let $\Delta_N(D)$ be the translation-invariant
differential operator on $A$ (that is, an element of $U(\fA)$) given by
$$[\Delta_N(D)f](a)=D\tilde f(a.o),$$
for $a\in A$, $f\in C^\infty(A.o)$, and where $\tilde f$ stands with the unique $N$-invariant function on $G/K$ that coincides with $f$ on $A.o$.
Then, we define
$$\Gamma : D\mapsto e^{-\rho}\!\circ \Delta_N(D)\!\circ e^{\rho},$$
remembering that $\rho$ is half the sum of positive roots and thus
can be seen as a function on $A$.  Note that
$$e^{-\rho}\!\circ \Delta_N(D)\!\circ e^{\rho}=\tau_\rho.\Delta_N(D),$$
where $\tau_\rho$ is the automorphism of $U(\fA)$ defined by putting $\tau_{\rho}(X)=X+\rho(X)$ for every $X\in\fA$.

In what follows, we denote by $\cZ(\fG_\C)$ the center of $U(\fG_\C)$. Thus, $\cZ(\fG_\C)$ is the algebra of
$G$-bi-invariant operators.   Differentiating the action of $G$ on $\bS$
gives a map $\cZ(\fG_\IC)\to\cD$.
For the next lemma we shall compare the isomorphism $\Gamma$ with 
an isomorphism $\omega_{HC}: \cZ(\fG_{\IC}) \To
U(\fH_{\IC})^{W(\fH_{\IC}:\fG_{\IC})}$, also called the Harish-Chandra
isomorphism\footnote{This is the isomorphism denoted by $\gamma_{HC}$ in \cite{SilVen1}, and defined by $\gamma_{HC}(z)=\tau_{\rho_\fH}{\rm pr}(z)$, where ${\rm pr}(z)\in U(\fH_{\IC})$ is such that $z-{\rm pr}(z)\in U(\fN_{\IC})U(\fA_{\IC})+U(\fG_{\IC})\fK_{\IC}$.}.

\begin{lem}\label{l:SV} Assume that the restriction from
$\fH_{\C}$ to $\fA$ induces a surjection from
$U(\fH_{\IC})^{W(\fH_{\IC}:\fG_{\IC})}$  to $U(\fA_\IC)^W$ (thought of as
functions on the respective linear spaces).

Let $D\in \cD$, of degree $\bd$. Then there exists $Z\in\cZ(\fG_\C)$
such that $Z$ and $D$ coincide on (right-)$K$-invariant functions, and such that
$$Z-\tau_{-\rho}\Gamma(D)\in U(\fN_{\IC})U(\fA_{\IC})^{\bd -2}+U(\fG_{\IC})\fK_{\IC}.$$
\end{lem}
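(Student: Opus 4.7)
The plan is to use the hypothesis to lift $D$ to a degree-$\bd$ central element $Z\in\cZ(\fG_\IC)$ via the two Harish-Chandra isomorphisms, and then to deduce the error bound from a refined form of Helgason's classical formula together with a degree count.

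First, applying $\Gamma$ gives $\Gamma(D)\in U(\fA_\IC)^W$ of degree $\bd$. The surjectivity hypothesis, promoted to a filtered surjection via Chevalley restriction on associated gradeds, lets us pick $P\in U(\fH_\IC)^{W(\fH_\IC:\fG_\IC)}$ of degree $\bd$ whose restriction to $\fA$ (setting $\fB\to 0$) equals $\Gamma(D)$. Set $Z=\omega_{HC}^{-1}(P)\in\cZ(\fG_\IC)$; since $\omega_{HC}$ is filtration-preserving ($\tau_{\rho_\fH}$ only introduces lower-order terms), $\deg Z\leq\bd$. That $Z$ coincides with $D$ on right-$K$-invariant functions follows from the standard commutative diagram expressing the canonical map $\cZ(\fG_\IC)\to\cD$ via $\omega_{HC}$ and $\Gamma$ as the restriction $U(\fH_\IC)^{W(\fH_\IC:\fG_\IC)}\to U(\fA_\IC)^W$: by construction $P$ restricts to $\Gamma(D)$, so $Z$ maps to $D$, which proves the first assertion.

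For the error estimate, I invoke Helgason's classical formula. By PBW one has the direct-sum decomposition
\[ U(\fG_\IC)=U(\fA_\IC)\oplus \fN_\IC U(\fG_\IC)\oplus U(\fG_\IC)\fK_\IC, \]
with associated projection $q:U(\fG_\IC)\to U(\fA_\IC)$ satisfying $q(Z)=\tau_{-\rho}\Gamma(D)$ for every $Z\in\cZ(\fG_\IC)$ whose image in $\cD$ is $D$. Hence
\[ Z-\tau_{-\rho}\Gamma(D)\in \fN_\IC U(\fG_\IC)+U(\fG_\IC)\fK_\IC. \]
Reducing modulo $U(\fG_\IC)\fK_\IC$, this element lies in $\fN_\IC U(\fN_\IC)U(\fA_\IC)$; combined with the total-degree bound $\deg Z\leq\bd$ and $\fN$-factor of degree $\geq 1$, its $\fA$-factor has degree $\leq\bd-1$, giving the preliminary estimate $Z-\tau_{-\rho}\Gamma(D)\in U(\fN_\IC)U(\fA_\IC)^{\bd-1}+U(\fG_\IC)\fK_\IC$.

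The final improvement from $\bd-1$ to $\bd-2$ is the main technical obstacle. It relies on the $\tau_{\rho_\fH}$-normalization built into the definition of $\omega_{HC}$: this shift is tailored precisely to cancel the linear-in-$\fA$ contributions that arise when one PBW-reorders the degree-$\bd$ root-vector products appearing in $Z$, forcing the coefficients of the terms of $\fN$-degree exactly $1$ and $\fA$-degree exactly $\bd-1$ to vanish. A direct check on $\mathfrak{sl}_2$ with the Casimir $C$ confirms this: one computes $C-\tau_{-\rho}\Gamma(D)=2E^2-2E(E-F)\in U(\fN_\IC)+U(\fG_\IC)\fK_\IC$, of $\fA$-degree $0=\bd-2$ rather than the weaker $\fA$-degree $\bd-1=1$ that the naive count would allow.
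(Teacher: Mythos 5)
Your construction of $Z$ (extend $\Gamma(D)$ to $P\in U(\fH_\IC)^{W(\fH_\IC:\fG_\IC)}$ by the surjectivity hypothesis, set $Z=\omega_{HC}^{-1}(P)$) is the same starting point as the paper's, but there are two genuine gaps after that.

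The first and more serious one is your claim that $Z$ and $D$ coincide on $K$-invariant functions by a ``standard commutative diagram'' relating $\omega_{HC}$, the restriction $U(\fH_\IC)^{W(\fH_\IC:\fG_\IC)}\to U(\fA_\IC)^W$, and $\Gamma$. This is precisely the point the paper flags as \emph{not} clear. The two Harish-Chandra isomorphisms are built from different decompositions of $U(\fG_\IC)$ (one relative to the complex Cartan $\fH_\IC$, the other relative to the Iwasawa triple $\fN\oplus\fA\oplus\fK$), and the compatibility you invoke is only known modulo terms of lower degree. Concretely, the paper only asserts that for $Z_1 := \omega_{HC}^{-1}(P)$ one has $Z_1-\tau_{-\rho}\Gamma(D)\in U(\fN_\IC)U(\fA_\IC)^{\bd-2}+U(\fG_\IC)\fK_\IC$, so that applying the Iwasawa projection one deduces only that $\Gamma(Z_1)-\Gamma(D)$ has degree $\leq\bd-2$, i.e.\ $Z_1\neq D$ in $\cD$ in general. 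The paper then repairs this by a descending induction: extend $\Gamma(D - Z_1|_\cD)$, take $\omega_{HC}^{-1}$ again, and so on, noting that all corrections land in $U(\fN_\IC)U(\fA_\IC)^{\bd-2}+U(\fG_\IC)\fK_\IC$ because $U(\fA_\IC)^{\bd-2}$ is contained there. Without that inductive correction, the first assertion of the lemma is unproved; and since your step 6 uses ``$q(Z)=\tau_{-\rho}\Gamma(D)$ for any $Z$ whose image in $\cD$ is $D$,'' the argument becomes circular: it presupposes the very property whose failure you would be correcting.

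The second gap is the passage from $\bd-1$ to $\bd-2$ in the error estimate. In the paper this is not a degree-counting cancellation that one can read off from the $\rho$-shift; it is a nontrivial prior result, cited explicitly as Corollary~4.4 of Silberman--Venkatesh. Your $\mathfrak{sl}_2$ verification is consistent with the claim (indeed $\Omega-\tau_{-\rho}\Gamma(\Omega)=2E^2-2E(E-F)$ lands in $U(\fN_\IC)+U(\fG_\IC)\fK_\IC$), but a single rank-one example does not establish the general statement, especially since the cancellation depends on multiplicities $m_\alpha$, the relation between $\rho$ and $\rho_\fH$, and the mixed nature of the decomposition used in defining $\omega_{HC}$. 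A complete proof would either reproduce the argument in [SilVen1, Cor.~4.4] or give an independent one; the heuristic ``$\tau_{\rho_\fH}$ is tailored to cancel the linear terms'' is an intuition, not a proof. Finally, note that the direct sum $U(\fG_\IC)=U(\fA_\IC)\oplus\fN_\IC U(\fG_\IC)\oplus U(\fG_\IC)\fK_\IC$ you write is not a direct sum (e.g.\ $E(E-F)$ lies in both of the last two summands); the correct statement is $U(\fG_\IC)=U(\fA_\IC)\oplus\bigl(\fN_\IC U(\fG_\IC)+U(\fG_\IC)\fK_\IC\bigr)$, which still yields a well-defined projection $q$, so this point is cosmetic but worth fixing.
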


\begin{rem} The assumption is automatically satisfied when $G$ is split.
It is also satisfied when $G/K$ is a {\em classical} symmetric space,
that is when $G$ is a classical group~\cite[p.\ 341]{Hel}.
In fact the lemma itself is Proposition II.5.32 of~\cite{Hel},
with the difference of degree between $Z$ and $\tau_{-\rho}\Gamma(D)$ made
precise.
\end{rem}

\begin{proof}
Let $D\in\cD$ be of degree $\bd$, so that $\Gamma(D)\in U(\fA_\IC)^W$ is a
polynomial of degree $\leq \bd$.  By assumption, we can extend 
$\Gamma(D)$ to an element of $U(\fH_{\IC})^{W(\fH_{\IC}:\fG_{\IC})}$.
Consider $Z_1=\omega_{HC}^{-1}\Gamma(D).$ It is shown in
\cite[Cor.\ 4.4]{SilVen1} that
$$Z_1-\tau_{-\rho}\Gamma(D)\in U(\fN_{\IC})U(\fA_{\IC})^{\bd-2}+U(\fG_{\IC})\fK_{\IC}.$$
It is not completely clear that $Z_1$ and $D$ coincide on $K$-invariant
functions, but the above formula shows that $\Gamma(Z_1)-\Gamma(D)$ is of
degree $\leq \bd-2$, and hence that $Z_1-D$ has degree at most $\bd-2$.

By descending induction on the degree of $\Gamma(Z)-\Gamma(D)$, we see that we can thus construct $Z\in \cZ(\fG_{\IC})$
such that
$$Z-\tau_{-\rho}\Gamma(D)\in U(\fN_{\IC})U(\fA_{\IC})^{\bd-2}+U(\fG_{\IC})\fK_{\IC}$$ and such that
$\Gamma(Z)-\Gamma(D)=0$ (which precisely means that $Z$ and $D$
coincide on right-$K$-invariant functions).
\end{proof}

\subsection{The Helgason-Fourier transform}
For any $\theta\in G/P_0$, $\nu\in\fA^*_{\IC}$, the function
$$e_{\nu, \theta}: x\in G/K \mapsto e^{(\rho+\nu)B(x, \theta)}$$
is a joint eigenfunction of $\cD$,
and one can verify easily (for instance in the case $\theta=eM$) that
$$ De_{\nu, \theta}=[\Gamma(D)](\nu)e_{\nu, \theta},$$
for every $D\in\cD$. Here we have seen $\Gamma(D)$ as a $W$-invariant
polynomial on $\fA^*_{\IC}.$

In fact for any joint eigenfunction $\psi$ of $\cD$ there exists
$\nu\in\fA^*_{\IC}$ such that $$D\psi=[\Gamma(D)](\nu)\psi$$
for every $D\in\cD$ \cite[Ch.\ II Thm.\ 5.18, Ch.\ III Lem.\ 3.11]{Hel}.
The parameter $\nu$ is called the ``spectral parameter'' of $\psi$;
it is uniquely determined up to the action of $W$.

The Helgason--Fourier transform gives the spectral decomposition of a function $u\in C_c^\infty(\bS)$ on the ``basis''  $(e_{\nu, \theta})$ of eigenfunctions of $\cD$. It is defined as
\begin{equation}\label{e:classicalHF}
\widetilde u(\lambda, \theta)=\int_{\bS} u(x)e_{-i\lambda, \theta}(x) dx,
\end{equation}
($\lambda\in\fA^*, \theta\in G/P_0$).  It has an inversion formula:
\begin{equation*}
u(x)=\int_{\theta\in G/P_0, \lambda\in \overline{ C_\Pi}}\widetilde u(\lambda, \theta)e_{i\lambda, \theta}(x)  d\theta |c(\lambda)|^{-2}d\lambda.
\end{equation*}
Here $d\theta$ denotes the normalized $K$-invariant measure on $G/P_0\sim K/M$.
The function $c$ is the so-called Harish-Chandra function, given
by the Gindikin-Karpelevic formula~\cite[Thm.\ 6.14, p.\ 447]{Hel}.
 
The Plancherel formula reads
$$\norm{u}^2_{L^2(\bS)}=\int_{\theta\in G/P_0, \lambda\in \overline{ C_\Pi}} |\widetilde u(\lambda, \theta)|^2 d\theta |c(\lambda)|^{-2}d\lambda.$$

\begin{rem} \label{r:D}For $D\in \cD$, $D$ acts on $u$ by
\begin{equation*}
Du(x)=\int_{\theta\in G/P_0, \lambda\in \overline{ C_\Pi}}[\Gamma(D)](i\lambda)\widetilde u(\lambda, \theta)e_{i\lambda, \theta}(x)  d\theta |c(\lambda)|^{-2}d\lambda
\end{equation*}
\end{rem}

%\begin{rem} \label{r:quasiinvariance}
%vraiment utile ? \spadesuit
%For later purposes we will need the following fact.
%If we fix $\lambda\in\ars$, the function $(x, \theta)\in G/K\times G/P_0\mapsto e_{i\lambda, \theta}(x)$ defines a function on $G/M$, that we call $F_\lambda$. It is given explicitely by 
%\begin{equation}\label{e:Flambda}F_\lambda: gM \in G/M \mapsto e^{(\rho+i\lambda)\log a}\end{equation}
%where $g=kna$ is the $KNA$ decomposition of $g$. 
%We note again that, if $g$ is defined modulo $M$, $a$ is uniquely defined, and $k$ is defined modulo $M$. If $h\in G$, we have
 %$$ F_\lambda(hg)= F_\lambda(h k) F_\lambda(g). $$
 
%Let now $D\in  U(\fN_\IC)U(\fA_\IC)$~: we can let it act as a differential operator on $F_\lambda$, if we lift $F_\lambda$ to a function on $G$. It is clear that the function
%$g\mapsto F_\lambda(g)^{-1} DF_\lambda(g)$ is a constant (depending on $D$ and $\lambda$).
%This will be used in Lemma \ref{l:crucial}.
%\end{rem} 
 
\section{Quantization and pseudodifferential operators\label{s:lift}}
In this section we develop a pseudodifferential calculus for $\bS$, inspired by
the work of Zelditch \cite{Zel86}. We do not push the analysis as far as in \cite{Zel86} (a more detailed analysis is done in Michael Schr\"oder's thesis \cite{SchDiss}). For us, the most
important feature of this quantization is that it is based on the Helgason-Fourier transform, in other words, on the spectral decomposition of the algebra $\cD$.

\subsection{Semiclassical Helgason transform}
We now introduce a parameter $\hbar$.  In the sequel it will tend to $0$
at the same speed as $\norm{\nu}^{-1}$; the reader may identify the two. The parameter will be assumed to go to infinity in the conditions of \S \ref{s:semi}, the limit $\nu_\infty$ assumed to be regular.

From now on we rescale the parameter space $\ars$ of the Helgason--Fourier
transform by $\hbar$.  We define the semiclassical Fourier transform,
$\widehat u_\hbar(\lambda, \theta)=\widetilde u(\hbar^{-1}\lambda, \theta).$
Thus, for $u\in C_c^\infty(\bS)$, we rewrite equation
\eqref{e:classicalHF} as:
$$\widehat u_\hbar(\lambda, \theta)=\int_{\bS} u(x)e_{-i\hbar^{-1}\lambda,\theta}(x) dx$$
($\lambda\in \overline{ C_\Pi}, \theta\in G/P_0$). 
The inversion formula now reads
$$u(x)=\int_{\theta\in G/P_0, \lambda\in \overline{ C_\Pi}} \widehat u_\hbar(\lambda, \theta)e_{i\hbar^{-1}\lambda, \theta}(x)  d\theta |c_\hbar(\lambda)|^{-2}d\lambda,$$
with the  ``semiclassical Harish-Chandra $c$-function'', 
$$|c_\hbar(\lambda)|^{-2}=  \hbar^{-r}|c(\hbar^{-1}\lambda)|^{-2}.$$
\begin{rem}\label{r:GK}
By the Gindikin-Karpelevic formula, we have
$$|c(\hbar^{-1}\lambda)|^{-2}\asymp \hbar^{-\dim \fN}$$
uniformly for $\lambda$ in a compact subset of $ C_\Pi$, and thus
$$|c_\hbar(\lambda)|^{-2}\asymp \hbar^{-d}$$
where $d=\dim\fA+\dim\fN=\dim(G/K)$.
\end{rem}
We also adjust the Plancherel formula to
$$\norm{u}^2_{L^2(\bS)}=\int |\widehat u_\hbar(\lambda, \theta)|^2 d\theta |c_\hbar(\lambda)|^{-2}d\lambda.$$

In the sequel we will always use the semiclassical Fourier transform, and will in general denote
$\widehat u$ instead of $\widehat u_\hbar$.

\subsection{Pseudodifferential calculus on $\bY$.}\label{s:PDO}
%%%%%%%%%%%%%%%%%%%%%%%%%%%%%%%%%%%%%%%%%%%%%%%%%%

We identify the functions on
the quotient $\bY=\Gamma\backslash G/K$  (respectively $ T^* \bY $) with the
$\Gamma$--invariant functions on $\bS=G/K$ (resp. $ T^*(G/K) $). If $\Gamma$ has torsion, we shall use ``smooth function on $\bY$'' to mean a $\Gamma$-invariant smooth function on $\bS$.
For a compactly supported function $\chi$ on $\bS$,
we denote $\Pi_\Gamma\chi(x)=\sum_\gamma \chi(\gamma.x)$.  This sum is finite
for any $x\in \bS$, and hence defines a function on $\bY$.

On $\bS$, we fix once and for all a positive, smooth and compactly supported
function $\phi$ such that $\sum_{\gamma\in\Gamma}\phi(\gamma.x)\equiv 1$.
We call such a function a ``smooth fundamental cutoff'' or a ``smooth
fundamental domain''.  Here we have used the assumption that $\bY$ is compact.
We also introduce $\tilde\phi\in C_c^\infty(\bS)$ which is identically $1$ on the support of $\phi$.
We note that for any $D\in\cD$ and for any smooth $\Gamma$-invariant
$u$ on $\bS$ we have
\begin{equation}\label{e:consist}
\Pi_\Gamma \left(\tilde \phi D\left(\phi u\right)\right)
   = \Pi_\Gamma D\left(\phi u\right)
   = D \Pi_\Gamma \phi u=Du.
\end{equation}

The analogue of left-quantization on $\IR^n$ in our setting associates
to a function $a$ on $ G/K\times G/P_0\times C_\Pi $
the operator which acts on $u\in C_c^\infty(G/K)$ by
\bequ\label{e:Weyl}
\Op^L_\hbar(a)\,\,u(x)=\int_{\theta\in G/P_0, \lambda\in \overline{ C_\Pi}}  
a\!\left(x, \theta, \lambda\right)\widehat u(\lambda, \theta)
e_{i\hbar^{-1}\lambda, \theta}(x)  d\theta |c_\hbar(\lambda)|^{-2}d\lambda \,\, .
\eequ
A similar formula was introduced by Zelditch in \cite{Zel86} (with $\hbar=1$) in the case $G=SL(2, \IR)$;  it is shown there that $a\mapsto \Op^L_\hbar(a)$ is $G$-equivariant.
The operator $\Op^L_\hbar(a)$ can be defined if $a$ belongs to a nice class of functions (possibly depending on
$\hbar$). If $a$ is smooth enough and has reasonable growth, it will be a pseudodifferential operator.
We give the regularity assumptions on $a$ below. In any case, we shall always require $a$ to be  
of the form $b\circ \pi$, where $b$ is a symbol on $T^* (G/K)$ and $\pi$ was defined in \eqref{e:pi}; besides, we will assume that $b$ is supported away from the singular $G$-orbits in $T^* (G/K)$ (which means that $a$ is supported away from the walls in $C_\Pi$). This allows to identify $a$ in a natural way with a function defined on (a subset of) $T^* (G/K)$.

Let us define {\it symbols of order $m$} on $T^* (G/K)$ (independent of $\hbar$) in the usual fashion~:
\begin{multline*} S^m (G/K):= \big\{ a \in C^\infty (T^* (G/K))/
\\
\mbox{ for every compact }F\subset G/K, \mbox{ for every }\alpha, \beta,\mbox{ there exists }C  \mbox{ such that }\\
|D_z^\alpha D_\xi ^\beta  a (x,\xi))|
\leq C(1+ |\xi|)^{m-|\beta|}\mbox{ for all }(x, \xi)\in T^*(G/K), x\in F  \big\} .\end{multline*}
%For instance, this class contains functions which are homogeneous in a neighbourhood of infinity.
%We denote $\Sigma ^{-\infty}= \cap _{m\in\Z} \Sigma ^m $ --- this class contains the smooth compactly supported functions $C_o^\infty (U \times \R^d)$.

We also define {\it semiclassical symbols of order $m$ and degree $l$} --- thus called  
because they depend on a parameter $\hbar$ :
\begin{equation}\label{e:symbols}S^{m,l}(G/K)=\{  a_\hbar(x, \xi) = \hbar^l\sum _{j=0}^\infty \hbar^j  a_j(x,\xi),~ a_j \in S^{m-j} \}.\end{equation}
This means that $a_\hbar( x, \xi )$ has an asymptotic expansion
in powers of $\hbar$, in the sense that
\[ a - \hbar^l\sum _{j=0}^{N-1} \hbar^j  a_j \in \hbar^{l+N}S^{m-N} \]
for all $N$, uniformly in $\hbar$.
In this context, we denote $S^{-\infty, +\infty}= \cap _{m\geq 0} S^{-m,m} $.

\begin{rem}As indicated above, we define symbols on $G/K\times G/P_0\times C_\Pi$ by transporting the standard definition on $T^* (G/K)$ through the map $\pi$ \eqref{e:pi}.
We will exclusively consider the case where $a$ vanishes outside a fixed neighbourhood of the singular $G$-orbits in $T^*(G/K)$. In other words, $a$ can be identified (through \eqref{e:pi}) with a function on $G/K\times G/P_0\times C_\Pi$, that vanishes in a neighbourhood of $G/K\times G/P_0\times \partial C_\Pi$.
 Defining a good pseudodifferential calculus using formula \eqref{e:Weyl} for symbols supported near the walls of  $C_\Pi$ raises delicate issues about the behaviour of the $c$-function near the walls, and we do not address this problem here. This is one among several reasons why we assume that $\Lambda_\infty$ is regular in our main theorem.
\end{rem}

%for instance the space of symbols
%\begin{equation}\label{e:S^mk}
%S^{m,k} \defeq
%\set{a=a_\hbar\in C_c^\infty( G/K\times G/P_0\times C_\Pi ),\ 
%|\partial_x^\alpha\partial_{(\theta, \lambda)}^\beta a |\leq 
%C_{\alpha,\beta } \hbar^{-k}\,\,\left\la\lambda\right\ra^{m-|\beta|}}.
%\end{equation}

We now project this construction down to functions on $\bY$, which we identify with
$\Gamma$-invariant functions on $\bS$. Here we do not follow Zelditch, who defined the action of 
$\Op_\hbar(a)$ on $\Gamma$-invariant functions in a global manner, using the Helgason-Fourier
decomposition of such functions. We continue to work locally, which is sufficient for our purposes.

For us, the quantization of
$a\in S^{m,k}\cap C^\infty(T^* \bY)$ (supported away from singular $G$-orbits) is defined to act on $u\in C^\infty(\bY)$ by:
\begin{equation}\label{e:Op}
\Op_\hbar(a)\,u=\Pi_\Gamma \tilde\phi
\Op_\hbar^L(a)\phi u \in C^\infty(\bY) .
\end{equation}
Note that \eqref{e:consist} and Remark \ref{r:D} imply that
$\Op_\hbar(H)=\Gamma^{-1}[H(-i\hbar\bullet)]$
for $H\in\cH$.

The image of $S^{m,k}$ by this quantization will be denoted $\Psi^{m,k}(\bY)$. 
This quantization procedure depends on the fundamental cutoff $\phi$ and on
$\tilde\phi$.  However, this dependence only appears at 
second order in $\hbar$.
%and the principal symbol map 
%$\sigma:\Psi^{m,k}(\bY)\to S^{m,k}/S^{m,k-1}$ is intrinsically defined.
The space $\Psi^{m,k}(\bY)$ itself is perfectly well defined modulo
$\Psi^{-\infty,+\infty}(\bY)=\cap_{k', m'} \Psi^{m',k'}(\bY)$. Moreover, it coincides with the more usual definition of pseudodifferential operators, defined using the euclidean Fourier transform in local coordinates\footnote{This could be checked by testing the action of $\Op_\hbar(a)$ on a local plane wave of the form $\phi(x)e^{\frac{i\xi.x}\hbar}$ in local euclidean coordinates. One then uses the stationary phase method and the facts that the complex phase of $e_{\hbar^{-1}\lambda, \theta}$ is $\hbar^{-1}\lambda B(x, \theta)$, and that the covector $(x, d_x\lambda B(x, \theta))\in
T^*_x(G/K)$ corresponds precisely to $(x, \theta, \lambda)$ under the identification \eqref{e:pi}.}.

% We actually need to consider symbols more general than \eqref{e:S^mk}.
%Following~\cite{DS99}, for any $0\leq \delta <1/2$ we introduce the symbol class 
%\bequ\label{e:symbol-eps}
%S_\delta^{m,k}\defeq\set{a\in C_c^\infty( G/K\times G/P_0\times C_\Pi ),\ |\partial_x^\alpha\partial_{(\theta, \lambda)}^\beta a |\leq 
%C_{\alpha,\beta }\,\, \hbar^{-k-\delta|\alpha+\beta |}\,\,\left\la\lambda\right\ra^{m-| \beta|}}\,\,.
%\eequ
%We can use standard results about pseudodifferential operators. For instance, the quantization of any $a\in S^{0,0}(G/K)$ leads
%to a bounded operator from $L^2_{comp}(G/K)$ to $L^2_{loc}(G/K)$ (where  $L^2_{comp}(G/K)$ 
%denotes $L^2$ functions with compact support, and $L^2_{loc}(G/K)$ denotes locally $L^2$ functions).
%If $a$ is in addition $\Gamma$-invariant, it follows that $\Op_\hbar(a)$ (defined by \ref{e:Op})
%is bounded on
%$L^2(\bY)$ (the norm being bounded uniformly in $\hbar$). 
 
%%%%%%%%%%%%%%%%%%%%%%%%%%%%%%%%%%%%%%%%%%%%%%%
\subsection{Action of $\Op_\hbar(H)$ on WKB states}
%%%%%%%%%%%%%%%%%%%%%%%%%%%%%%%%%%%%%%%%%%%%%%% 

Fix a Hamiltonian $H\in \cH$.
 
The letter $H$ will stand for several different objects which are
canonically related: a function $H$ on $T^*(G/K)$,
a $W$-invariant polynomial function on $\fA^*$, and an element of $U(\fA)^W$.
As such, we can also let $H$ act as a left-$G$-invariant differential
operator\footnote{We have also introduced the differential operator
$\Op_1(H)=\Gamma^{-1}[H(-i\bullet)]$ acting on $G/K$.
These are {\em not} the same objects, but \cite[Cor.\ 4.4]{SilVen1} relates the two.}
on $G$ or $G/M$.

%We note the relation
%$$\Op_1(H).e_{i\lambda, \theta}(x)=H(-i\bullet).F_\lambda(x, \theta),$$
%where $F_\lambda$ is the function defined in \eqref{e:Flambda}. On the left hand side
%$\Op_1(H)$ is acting on functions on $G/K$ whereas on the right $H(-i\bullet)$ is acting on $G/M$.
 
In the following lemma, all functions on $G/K$ and $G/M$ are lifted to functions on $G$, and in that sense we can apply to them any differential operator on $G$. If $b$ is a function defined on
$G/M=G/K\times G/P_0$, and $\theta$ is an element of $G/P_0$, we denote $b_\theta$ the function defined on $G/K$ by $b_\theta(x)=b(x, \theta)$.

\begin{lem} \label{l:crucial}
Let $H\in \cH$ be of degree $\bar d$, and let $b$ be a smooth function on $G/M $. Fix $\lambda\in\fA^*$.
Then, there exist $D_k \in U(\fN_\IC)U(\fA_\IC)$ of degree $\leq k$ (depending on $\lambda$ and on $H$) such that
for any $\theta\in G/P_0$, for any $x\in G/K$,
$$\Op_\hbar(H) [b_\theta.e_{i\hbar^{-1}\lambda, \theta }] (x)= 
\left( H(\lambda)b(x, \theta) -i\hbar [dH(\lambda).b](x, \theta) +\sum_{k=2}^{\bar d}\hbar^k D_k b (x, \theta)\right)e_{i\hbar^{-1}\lambda, \theta }(x).$$
On the right $H$ is seen as a function on $\fA^*$, so its differential
$dH(\lambda)$ is an element of $\fA$, and it
acts as a differential operator of order $1$ on $G/M$. 
 Each operator $D_k$ actually defines a differential operator on
	$G/M$.
	\end{lem}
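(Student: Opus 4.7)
The plan is to verify this action by combining $G$-equivariance with an algebraic computation on the group. First I would reduce to the base point $(x,\theta)=(eK,eP_0)$: the operator $\Op_\hbar(H)\in\cD$ is $G$-invariant, the vectors $e_{\nu,\theta}$ satisfy $e_{\nu,g\theta}(gx)=e_{\nu,\theta}(x)$, and $dH(\lambda)\in\fA$ acts equivariantly on $G/M$ by right-translation, so a pointwise identity at the base point extends by the $G$-action to every $(x,\theta)\in G/M$. At the base point I would lift $\Op_\hbar(H)$ to $\cZ(\fG_\IC)$ via Lemma~\ref{l:SV}, choosing $Z\in\cZ(\fG_\IC)$ with
\[
Z\ \equiv\ Z_0 + R_\fN \pmod{U(\fG_\IC)\fK_\IC},\qquad Z_0:=\tau_{-\rho}[H(-i\hbar\bullet)]\in U(\fA_\IC),\quad R_\fN\in U(\fN_\IC)U(\fA_\IC)^{\bar d-2};
\]
the $U(\fG_\IC)\fK_\IC$ piece annihilates the right-$K$-invariant function $\tilde b_\theta\cdot\widetilde{e_{\nu,\theta}}$ and plays no role.

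For the $Z_0$ term, Leibniz together with the identity $X^\beta\widetilde{e_{\nu,eM}}(e)=(\rho+\nu)^\beta$ (which holds because $\widetilde{e_{\nu,eM}}|_A(a)=a^{\rho+\nu}$) and the substitution $\nu=i\hbar^{-1}\lambda$ give
\[
Z_0\bigl(\tilde b\cdot\widetilde{e_{\nu,eM}}\bigr)(e)\ =\ \sum_{\beta}\frac{(-i\hbar)^{|\beta|}}{\beta!}\,(\partial^\beta H)(\lambda)\,(X^\beta\tilde b)(e).
\]
The $|\beta|=0$ term is the principal $H(\lambda)b(eM)$; the $|\beta|=1$ term is exactly $-i\hbar\,dH(\lambda)\cdot b(eM)$, because at $g=e$ the right-$\fA$-derivative of $\tilde b_{eP_0}$ coincides with the right-$\fA$-action on $G/M$ (which preserves the $\theta$-slice since $A$ centralises $M$); the $|\beta|\ge 2$ terms produce $\sum_{k\ge 2}\hbar^k D_k b$ with $D_k$ a differential operator on $G/M$.

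For the $R_\fN$ part, each PBW monomial $Y^\alpha X^\beta$ carries an $\hbar$-coefficient of order $\hbar^{|\alpha|+|\beta|}$, inherited from the construction of $Z$ via $\omega_{HC}^{-1}$ and the scaling of $H(-i\hbar\bullet)$. Leibniz at $g=e$ annihilates every term where any $\fN$-factor acts on $\widetilde{e_{\nu,eM}}$ (that function is left-$N$-invariant, so its right-$\fN$-derivatives vanish at $e$); the surviving contributions have the $Y^\alpha$'s acting on $\tilde b$, while the $X^\beta$'s split, with the part on $\widetilde{e_{\nu,eM}}$ contributing at worst $\nu^{|\beta'|}\sim\hbar^{-|\beta'|}$, so the net order is $\hbar^{|\alpha|+|\beta'|}\ge\hbar^{|\alpha|}$. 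Semisimplicity of $\fG_\IC$ rules out degree-one central elements, and a structural check using $[\fA,Z]=0$ eliminates the remaining $|\alpha|=1$ monomials from the $\fN\fA$-normal form of $R_\fN$, forcing $|\alpha|\ge 2$; the resulting $O(\hbar^2)$ contributions are absorbed into the $D_k$ for $k\ge 2$. The hard part is certifying $|\alpha|\ge 2$; should this step prove delicate, an alternative is to invoke the paper's remark that $\Op_\hbar(H)$ agrees in local coordinates with the standard semiclassical pseudodifferential quantisation, so that the formula is just the WKB expansion of a PDO with symbol $H$ on the state $b_\theta(x)\cdot e^{\rho B}\cdot e^{i\lambda B/\hbar}$, with the $\rho$-shift playing the role of a half-density that produces exactly the $-i\hbar\,dH(\lambda)\cdot b$ subprincipal term.
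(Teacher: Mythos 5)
Your overall strategy -- lift via Lemma~\ref{l:SV}, split $Z$ into a $\tau_{-\rho}H(-i\hbar\bullet)$ piece and a $U(\fN_\IC)U(\fA_\IC)$ remainder, and compute by Leibniz using the explicit form of $e_{i\hbar^{-1}\lambda,\theta}$ -- is the same as the paper's, and the treatment of the diagonal piece via the Taylor/Leibniz expansion in $\lambda$ is correct.

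However, there is a genuine gap in your treatment of the $R_\fN$ piece, rooted in a bookkeeping error on the $\hbar$-powers. You assert that each PBW monomial $Y^\alpha X^\beta$ of $R_\fN$ ``carries an $\hbar$-coefficient of order $\hbar^{|\alpha|+|\beta|}$,'' and then, because a full block of $X^{\beta}$ derivatives landing on $e_{i\hbar^{-1}\lambda,\theta}$ brings down $\hbar^{-|\beta|}$, you are left with a residual order $\hbar^{|\alpha|}$; this forces you to try to certify $|\alpha|\geq 2$, which you correctly flag as ``the hard part'' and ultimately leave unproven (the appeal to semisimplicity ruling out degree-one central elements plus a structural check on $[\fA,Z]=0$ is a sketch, not an argument). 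But the premise is wrong and the difficulty is spurious. The clean route, which the paper takes, is to reduce by linearity to $H$ homogeneous of degree $\bar d$; then $\Op_\hbar(H)=\hbar^{\bar d}\Op_1(H)$, so one applies Lemma~\ref{l:SV} to the $\hbar$-independent operator $\Op_1(H)$ and obtains $R_\fN=\hbar^{\bar d}\cdot(\text{an }\hbar\text{-independent element of } U(\fN_\IC)U(\fA_\IC)^{\bar d-2})$. The worst an $X^{\beta'}$ can do by hitting the exponential is produce $\hbar^{-|\beta'|}$ with $|\beta'|\leq\bar d-2$, so the net order is $\hbar^{\bar d-|\beta'|}\geq\hbar^{2}$ -- with no constraint on $|\alpha|$ at all. (Even without the homogeneity reduction, decomposing $H$ into monomials $\nu^\beta$ shows that the piece of $R_\fN$ attached to $\nu^\beta$ carries $\hbar^{|\beta|}$ and sits in $U(\fN_\IC)U(\fA_\IC)^{|\beta|-2}$, so a PBW monomial of degree $k$ actually carries at least $\hbar^{k+2}$, not $\hbar^{k}$ as you claim; the ``$+2$'' is precisely what saves the day.) Your fallback -- invoking the paper's footnote that $\Op_\hbar$ agrees with a standard semiclassical quantisation in local coordinates -- is a plausibility check but not a proof; the paper does not supply that comparison in enough detail to carry the subprincipal symbol computation. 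Finally, you do not address the last assertion of the lemma, that each $D_k$ descends to a differential operator on $G/M$ (the paper's argument is that the full sum is $M$-invariant for every $\hbar$, forcing each coefficient to be); this is a small but nonzero omission.
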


	\begin{proof}
	By linearity, it is enough to treat the case where $H\in U(\fA)^W$ is homogeneous
	of degree $\bar d$. In this case, we have
	$$\Op_\hbar(H)=\hbar^{\bar d }\Op_1(H) =\hbar^{\bar d}\Gamma^{-1}[H(-i\bullet)].$$
	Consider the operator $Z$ related to $D=\Op_{1}(H)$ by Lemma \ref{l:SV}. We have
	$$\Op_1(H) [b_\theta.e_{i\hbar^{-1}\lambda, \theta }] (x)=Z[b_\theta.e_{i\hbar^{-1}\lambda, \theta }] (x).$$

	% Recall the isomorphism $G/M \To G/K\times G/P_0 $,
	%given by $gM\mapsto (gK, gP_0)$. The inverse map 
	%$G/K\times G/P_0 \To G/M$ goes as follows. Given $\theta=g_1P_0$, any $gK\in G/K$ can be represented uniquely as $gK=g_1 na K$. The element $(gK, g_1P_0)\in G/K\times G/P_0 $ is identified with $g_1 naM\in G/M$. 

	%Any function on $G/K$ can be pulled back to a function on $G/M$, and
	%the action of $Z$ on functions on $G/M$ preserves the subspace
	%of right $K$-invariant functions.

In what follows we consider the point $(x, \theta)\in G/K\times G/P_0$. We choose a representative of $\theta$ in $K$ ($\theta$ is then defined modulo $M$, but the calculations do not depend on the choice of this representative). We write $x=\theta na K$. This means that $(x, \theta)$ represents the point $\theta naM\in G/M$. All functions on $G/K$ and $G/M$ are lifted to functions on $G$, and in that sense we can apply to them any differential operator on $G$.

	By Lemma \ref{l:SV}, we have  
	\begin{multline*}Z[b_\theta.e_{i\hbar^{-1}\lambda, \theta }](x)= Z[b_\theta.e_{i\hbar^{-1}\lambda, \theta }](\theta na )=\tau_{-\rho}H(-i\bullet).[b_\theta.e_{i\hbar^{-1}\lambda, \theta }](\theta na)+
	D[ b_\theta.e_{i\hbar^{-1}\lambda, \theta } ](\theta na)
	\end{multline*}
	where $D\in U(\fN_{\IC})U(\fA_{\IC})^{\bd -2}$.

	Because of the identity
	$$e_{i\hbar^{-1}\lambda, \theta } (\theta nag)=e^{(\rho+i\hbar^{-1}\lambda)B(\theta na)} 
	e^{(\rho+i\hbar^{-1}\lambda)H_0(g)} ,$$
	(valid for any $g\in NA$) we see that, for any $D\in U(\fN_{\IC})U(\fA_{\IC})$, the term $D[e_{i\hbar^{-1}\lambda, \theta } ](\theta na)$ is of the form $C e_{i\hbar^{-1}\lambda, \theta } (\theta na)$, where the constant $C$ depends on $D$ and $\hbar^{-1}\lambda$. This constant $C$ is in fact polynomial in $\hbar^{-1}\lambda$.
	
	This results in an expression~:
	\begin{multline*}Z[b_\theta.e_{i\hbar^{-1}\lambda, \theta }](x)= Z[b_\theta.e_{i\hbar^{-1}\lambda, \theta }](\theta na )=\tau_{-\rho}H(-i\bullet).[b_\theta.e_{i\hbar^{-1}\lambda, \theta }](\theta na)\\+\left[
	\sum_{k=0}^{\bd-2} \hbar^{-k}D_{\bd -k}b(\theta na)\right]e_{i\hbar^{-1}\lambda, \theta } (\theta na)
	\end{multline*}
	where $D_{\bd-k}\in U(\fN_{\IC})U(\fA_{\IC})$ depends only on $\lambda$ and $H$.
	
	A term in $\hbar^{-k}$ can only arise if $e_{i\hbar^{-1}\lambda, \theta }$ is differentiated $k$ times;
	but $Z$ being of degree $\bar d$, we see then that $D_{\bar d-k}$ can be of order $\bar d-k$ at most.
	The last term, when multiplied by $\hbar^d$, becomes $\sum_{k=2}^{\bar d}\hbar^k D_k b$.  
	We do not know a priori if the function $D_{\bd -k}b$ (defined on $G$) is $M$-invariant, but the sum $\sum_{k=0}^{\bd-2} \hbar^{-k}D_{\bd -k}b$ necessarily defines an $M$-invariant function on $G$, since all the other terms do. Since $\hbar$ is arbitrary, we see that each $D_k$ must necessarily send an $M$-invariant function to an $M$-invariant function. 
	
	Finally, we write
	\begin{eqnarray*}\tau_{-\rho}H(-i\bullet).[b_\theta.e_{i\hbar^{-1}\lambda, \theta }](\theta naM)&=&
	H(-i\bullet)[b_\theta.e_{i\hbar^{-1}\lambda -\rho, \theta }]. e_{0, \theta }(\theta naM)\\
	&=&[\tau_{i\hbar^{-1}\lambda}H(-i\bullet).b_\theta]. e_{i\hbar^{-1}\lambda, \theta }(\theta naM).
	\end{eqnarray*}
	When multiplying by $\hbar^{\bar d}$, and using the Taylor expansion of $H$ at $\lambda$, we have
	$$\hbar^{\bar d} \tau_{i\hbar^{-1}\lambda}H(-i\bullet) 
	=H(\lambda)-i\hbar dH(\lambda) +\sum_{k=2}^{\bar d}
	 \frac{(-i\hbar)^k}{k!} d^{(k)}H(\lambda).$$

	\end{proof}
	
We will refer to a function of the form $x\mapsto	b_\theta(x)e_{i\hbar^{-1}\lambda, \theta }(x)$
as a {\em WKB state}, using the language of semiclassical analysis.

	%%%%%%%%%%%%%%%%%%%%%%%%%%%%%%%%%%%%%%%%%%%%%%%%%%
	\subsection{Symplectic lift}
	%%%%%%%%%%%%%%%%%%%%%%%%%%%%%%%%%%%%%%%%%%%%%%%%%%
	Let $\psi$ be a $\cD$-eigenfunction, of spectral parameter $\nu$.
	We let $\hbar=\norm{\nu}^{-1}$ (the choice of the norm here is arbitrary, one can take the Killing norm for instance).
	We sometimes write $\psi=\psi_\nu$ to indicate the spectral parameter,
	but this notation is imprecise in that $\psi$ may not be uniquely
	determined by $\nu$.

	To $\psi_\nu$ we attach a distribution $\tilde \mu_\psi$
	(sometimes denoted $\tilde \mu_\nu$) on $T^*\bY$:
	for $a\in C_c^\infty(T^*\bY)$ set
	$$\tilde \mu_\psi(a)=\left\la \psi, \Op_\hbar(a)\psi\right\ra_{L^2(\bY)}$$

	As described in Section \ref{s:intro} we are trying to classify weak-* limits of the
	distibutions $\tilde \mu_\nu$ in the limit $\nu\to\infty$.  We fix
	such a limit (``semiclassical measure'') $ \mu$ and a sequence
	$(\psi_j)_{j\in\IN} = (\psi_{\nu_j})_{j\in\IN}$ of eigenfunctions
	such that the corresponding sequence $(\tilde\mu_{\nu_j})$ converges
	weak-* to $ \mu$.  In the sequel we write $\nu$ for $\nu_j$. We assume that $\nu$ goes to infinity in the conditions of paragraph \ref{s:semi}, the limit $\nu_\infty$ assumed to be regular.
	We let $\hbar=\norm{\nu}^{-1}.$ Writing $\Lambda=\Lambda_\nu=\hbar \Im m(\nu)$ we have
	$\Lambda\To\Lambda_\infty=\Im m(\nu_\infty)=-i\nu_\infty$.
	Note that $\Re e(\nu)$ is bounded
        \cite[\S16.5(7) \& Thm.\ 16.6]{Knapp86}), so that
	$\hbar\nu=i\lambda_\nu +O(\hbar)$.  Necessarily $\nu_\infty$ is purely imaginary.
	
		With the notations of Section \ref{s:HC}, the state $\psi_\nu$ satisfies
	\begin{equation}\label{e:eigenstate}
	  \Op_\hbar(H).\psi_\nu= H(-i\hbar\nu)\psi_\nu\,\, 
	\end{equation}
	for all $H\in\cH$.  From now on, we fix a Hamiltonian $H\in \cH$. The letter $H$ will stand for two different objects that are canonically related: a function $H$ on $T^*(G/K)$ ($G$-invariant and polynomial in the fibers of the projection $T^*(G/K)\To G/K$), a $W$-invariant
	 polynomial function on $\fA^*$, an element of $U(\fA)^W$.

	We denote $X_\Lambda=dH(\Lambda)\in \fA$. Since $\Lambda$ is only defined up to an element of $W$, so is $X_\Lambda$. One can assume that $\alpha(X_{\Lambda_\infty})\geq 0$ for all $\alpha\in\Delta^+$.
	For simplicity (and without loss of generality), we will also assume that $ \Lambda_\infty$ belongs to 
	the Weyl chamber $C_\Pi$.

	%For convenience, we choose $\Lambda$ such that $X_\Lambda$ is in the positive Weyl chamber (or one of its walls).

	\vspace{.5cm}
	{\bf Other miscellaneous notations:} $d$ is the dimension of $G/K$, $r$ is the rank, and $J$ the dimension of $N$ (so that $d=r+J$). We call ${\tilde J}$ the number of roots.  
	We index the positive roots $\alpha_1, \ldots, \alpha_{\tilde J}$ in such a way that $\alpha_1(X_{\Lambda_\infty})\leq \alpha_2(X_{\Lambda_\infty})\leq\ldots \leq\alpha_{\tilde J}(X_{\Lambda_\infty})$ (with our previous notations, we have $\alpha_{\tilde J}(X_{\Lambda_\infty})=\chi_{\max}(H)$). We fix $\cK$ as in Theorem \ref{t:main}, and we denote
	$j_0=j_0(X_{\Lambda_\infty})$ the largest index $j$ such that
	$\alpha_j(X_{\Lambda_\infty})< \frac1{2\cK}$.
	
With $\wl\in W$ the long element, we set:
$\lnf  = \oplus_{j>j_0}     \lieg_{\alpha_j}$,
$\lns  = \oplus_{j\leq j_0} \lieg_{\alpha_j}$,
$\lnbf = \oplus_{j>j_0}     \lieg_{\wl.\alpha_j}$,
$\lnbs = \oplus_{j\leq j_0} \lieg_{\wl.\alpha_j}$
$J_0=\dim\lns=\sum_{j\leq j_0} m_{\alpha_j}$.
The spaces $\lnf$ and $\lnbf$ are subalgebras, in fact ideals,
in $\lien$, $\lienb$ respectively;
they generate subgroups $\Nf, \Nbf$ that are normal in $N, \overline{N}$
respectively.
	%In $G$, every element
	%$g$
	%close to the identity can be written in a unique way as $g=\exp(\sum_{\alpha}Y_\alpha)$, with
	%$Y_\alpha\in\fG_\alpha$ close to zero (here we denote $\fG_0=\fA$). We will denote $Y_\alpha=\log_\alpha(n)$. We will denote $V\subset G$ a neighbourhood of identity, symmetric, and small enough so that
	%$$2\norm{\log_\alpha(g_1)+\log_\alpha(g_2)}\geq \norm{\log_{\alpha}(g_1g_2)}\geq \frac12 \norm{\log_\alpha(g_1)+\log_\alpha(g_2)}$$
	%for $g_1, g_2\in V$.

	%necessaire ? correct ?

	%We consider the flow $e^{tX_\Lambda}$ acting on $G/M$ on the right. It turns out that for our analysis the good directions are the {\em expanded} ones: if we want $N$ to correspond to the expanding direction we will have to reverse time. We consider $e^{-tX_\Lambda}$ with $t\geq 0$.
	%In the following argument, `unstable' systematically to refers to $N$ and `stable' refers to $\overline{N}$. The vector $X_\Lambda$ may be singular, so that these stable or unstable spaces can also contain neutral directions for the flow.
	% a repenser

	%%%%%%%%%%%%%%%%%%%%%%%%%%%%%%%%%%%%%%%%%%%%%%%%
	\section{The WKB Ansatz \label{s:WKB}}
	%%%%%%%%%%%%%%%%%%%%%%%%%%%%%%%%%%%%%%%%%%%%%%

We now start the proof of Theorem \ref{t:main}. We first describe how the operator $P_{\bom}^\chi$
acts on WKB states. In Section \ref{s:CS}, we will use the fact that these states form a kind of basis
to estimate the norm of the operator.
	%%%%%%%%%%%%%%%%%%%%%%%%%%%%%%%%%%%%%%%%%%%%%%%%%% 
	\subsection{Goal of this section}
	%%%%%%%%%%%%%%%%%%%%%%%%%%%%%%%%%%%%%%%%%%%%%%%%%%
	Fix a sequence ${\bom}=(\om_{-T},\cdots, \om_{-1},\om_0, \cdots \om_{T-1})$, of length $2T$ chosen so that $T\eta \leq \cK|\log \hbar|$.
Theorem \ref{t:main} requires us to estimate the norm of the operator $P_{\bom}^\chi$
acting on $L^2(\bY)$ (for a suitable choice of the time step $\eta$). This operator is the same as
	$U^{-(T-1)\eta}\cP$
	where
	$$\cP=P_{\omega_{T-1}}U^\eta\ldots U^\eta P_{\omega_{0}}^{1/2}\Op_\hbar(\chi)P_{\omega_{0}}^{1/2}U^\eta\ldots P_{\omega_{-T+1}}U^\eta P_{\omega_{-T}} ,$$
	where we recall that $$U^t=\exp(i\hbar^{-1} t\Op_\hbar(H)).$$
	On the ``energy layer'' $\cE_\lambda$, $U^t$ quantizes the action of $e^{-tX_\lambda}$, in other words the time $-t$ of the Hamiltonian flow generated by $H$. Under the action of $e^{-tX_\lambda}$ for $t\geq 0$, elements of $\fN$ are expanded and elements of $\bar\fN$ are contracted (the vector $X_\Lambda$ may be singular, so that these stable or unstable spaces can also contain neutral directions).

	In what follows we estimate the norm of $\cP$.
	To do so, we will first describe how $\cP$ acts on our Fourier basis $e_{i\hbar^{-1}\lambda, \theta}$,
	using the technique of WKB expansion (\S \ref{s:ansatz}). Then, we will use the Cotlar-Stein lemma (\S \ref{s:CS})
	to estimate as precisely as possible the norm of $\cP$.

	The sequence $\omega_{-T},\ldots, \omega_{T-1}$ is fixed throughout this section. Instead of working with functions on $\bY$ we work with functions on $G/K$ that are $\Gamma$-invariant. For instance, $P_\omega$ is the multiplication operator by the $\Gamma$--invariant function $P_\omega$. We assume that each connected component of the support of $P_\omega$ has very small diameter (say $\eps$). We will fix $Q_\omega$, a function in $C_c^\infty(\bS)$ such that $\Pi_\Gamma Q_\omega=P_\omega$ and such that the support of $Q_\omega$ has diameter $\eps$. We also denote $ Q_\omega$ the corresponding multiplication operator. Finally we need to introduce $Q'_\omega$ in $C_c^\infty(\bS)$
	which is identically $1$ on the support of $Q_\omega$ and supported in a set of diameter $2\eps$.
	  
	 We decompose
	\begin{equation}\label{e:cP}\cP=\cS^*\cU_\chi \end{equation}
	where 
	$$\cU_\chi=\Op(\chi)
	P_{\omega_0}^{1/2}U^{\eta}P_{\omega_{-1}}\ldots U^\eta\widehat  P_{\omega_{-T+1}}U^\eta P_{\omega_{-T}}$$
	and
	$$\cS=
	P_{\omega_0}^{1/2}\ldots U^{-\eta}P_{\omega_{T-2}}U^{-\eta}P_{\omega_{T-1}}.$$

	 %%%%%%%%%%%%%%%%%%%%%%%%%%%%%%%%%%%%%%%%%%%%%%%%%
	%\subsection{Stable/unstable matrix bases}
	%%%%%%%%%%%%%%%%%%%%%%%%%%%%%%%%%%%%%%%%%%%%%%%%%%%
	%For $\lambda\in \bar\fA^*_{+}$, we will denote $e^u_{i\hbar^{-1}\lambda, \theta} =e_{i\hbar^{-1}\lambda, \theta}$ and $e^s_{i\hbar^{-1}\lambda, \theta} = \bar e_{-\hbar^{-1}\lambda, \theta}= e_{-i\hbar^{-1}\lambda, \theta}$. The $s$ means stable and the $u$ means unstable.  
	%It is not clear that it is a good name. Moreover, in view of the sequel, it would make more sense to parametrize the stable basis by $-\Ad(w).\lambda$.

	%%%%%%%%%%%%%%%%%%%%%%%%%%%%%%%%%%%%%%%%%%%%%%%%%%%
	\subsection{The WKB Ansatz for the Schr\"odinger propagator \label{s:ansatz}}
	%%%%%%%%%%%%%%%%%%%%%%%%%%%%%%%%%%%%%%%%%%%%%%%%%%%
	We recall some standard calculations, already done in~\cite{AN07}, with some additional simplifications
	coming from the fact that the functions $e_{i\hbar^{-1}\lambda, \theta}$ are eigenfunctions of $\Op_\hbar(H)$.

	On $\bS$, let us try to solve
	$$-i\hbar \frac{\partial \tilde u}{\partial t}=\Op_\hbar(H)\tilde u,$$
	in other words
	$$\tilde u(t)=U^t \tilde u(0),$$
	with initial condition the WKB state
	$\tilde u(0, x)=a_\hbar(0, x)e_{i\hbar^{-1}\lambda, \theta}(x).$ We only consider $t\geq 0$.
	We assume that $a_\hbar$ is compactly supported and has an asymptotic expansion in all $C^l$ norms as $a_\hbar\sim \sum_{k\geq 0}
	\hbar^k a_k$. We look for approximate solution up to order $\hbar^M$, in the
	form
	$$u(t, x)=e^{\frac{itH(\lambda)}\hbar}e_{i\hbar^{-1}\lambda, \theta}(x)a_\hbar(t, x)=e^{\frac{itH(\lambda)}\hbar}e_{i\hbar^{-1}\lambda, \theta}(x)\sum_{k=0}^{M-1}
	\hbar^k a_k(t, x).$$Let us denote
	\begin{equation}\label{e:firstansatz}u(t, x)=e^{\frac{itH(\lambda)}\hbar}e_{i\hbar^{-1}\lambda, \theta}(x)a_\hbar(t, x, \theta, \lambda)=e^{\frac{itH(\lambda)}\hbar}e_{i\hbar^{-1}\lambda, \theta}(x)\sum_{k=0}^{M-1}
	\hbar^k a_k(t, x,\theta, \lambda)
	\end{equation}
	to keep track of the dependence on $\theta$ and $\lambda$; the pair $(x, \theta)$ then represents an element of $G/K\times
	G/P_0=G/M$.
	Identifying powers of $\hbar$, and using Lemma \ref{l:crucial}, we find the conditions:
	\begin{equation} \label{e:mainBKW}
	\begin{cases} \frac{\partial a_0}{\partial t}(x, \theta)= [dH(\lambda). a_0](x, \theta)\quad
	\mbox{($0$-th transport equation)}\,\,
	\\
	\\
	\frac{\partial a_k}{\partial t}(x, \theta)= [dH(\lambda). a_k](x, \theta)+i\sum_{l=2}^{\bar d}\sum_{l+m=k+1}D_l a_m (x, \theta)
	\quad\mbox{($k$-th transport equation)}\,\,.
	\end{cases}
	\end{equation}
	%\begin{rem}If we compare with the analogous set of equations (3.6) of~\cite{AN07}, we see that in the present situation:\\
	%$\bullet$ the Hamilton-Jacobi equation $\frac{\partial U}{\partial t}-H(x, d_x U)=0$ is automatically solved by the function $U:x\mapsto \lambda.B(x, \theta)+tH(\lambda)$. This corresponds to the fact that 
	%$e_{i\hbar^{-1}\lambda, \theta}$ is an eigenfunction of $\Op_\hbar(H)$ with eigenvalue $H(\lambda)$.\\
	%$\bullet$ the transport equation satisfied by $a_0$ is simpler, it only involves tranport by $e^{tdH(\lambda)}$ but no renormalization by the (square root) Jacobian of that map. This Jacobian is already taken into account in the term $e^{\rho.B(x, \theta)}$ that enters the definition of $e_{i\hbar^{-1}\lambda, \theta}$.
	%\end{rem}
	The equations \eqref{e:mainBKW} can be solved explicitly by
	$$a_0(t, (x, \theta), \lambda)=a_0(0, (x, \theta)e^{tX_\lambda}, \lambda),$$
	in other words
	$$a_0(t)=R(e^{tX_\lambda})a_0(0),$$
	where $R$ here denotes the action of $A$ on functions on $G/M$ by right translation; and
	$$a_k(t)=R(e^{tX_\lambda})a_k(0)+ \int_0^t R(e^{(t-s)X_\lambda})
	\left(i\sum_{l=2}^{\bar d}\sum_{l+m=k+1}D_l a_m (s,x, \theta)\right)ds.$$
	If we now define $u$ by \eqref{e:firstansatz},
	$u$ solves 
	$$-i\hbar \frac{\partial \tilde u}{\partial t}=\Op_\hbar(H)\tilde u
	- e^{\frac{itH(\lambda)}\hbar}e_{i\hbar^{-1}\lambda, \theta}\left[ \sum_{l=2}^{\bar d}\sum_{k= M+1-l}^{M-1}
	\hbar^{k+l}D_l a_k\right]
	$$
	and thus
	\begin{eqnarray*}\norm{u(t)-U^t u(0)}_{L^2(\bS)}&\leq& \int_0^t
	\left[ \sum_{l=2}^{\bar d}\sum_{k= M+1-l}^{M-1}
	\hbar^{k+l-1}\norm{D_l a_k(s)}_{L^2(\bS)}\right]ds \label{e:remainder}\\
	&\leq&  t  e^{(2M+\bar d-2)t \max_{\alpha\in\Delta^+}\alpha(X_\lambda)^-}\left[ \sum_{l=2}^{\bar d}\sum_{k= M+1-l}^{M-1}
	\hbar^{k+l-1}\sum_{j=0}^k\norm{a_{k-j}(0)}_{C^{2j+l}}\right]\\
	&\leq & C t\hbar^M  e^{(2M+\bar d-2)t \max_{\alpha\in\Delta^+}\alpha(X_\lambda)^-} \left[  \sum_{k=0}^{M-1}\norm{a_{k}(0)}_{C^{2(M-k)+\bar d -2}}\right].
	\end{eqnarray*}
	Since $D_k$ belongs to $U(\fN_{\IC})U(\fA_{\IC})$, in the co-ordinates
        $(x, \theta)$ it only involves differentiation with respect to $x$. We also recall that $D_k$ is of order $k$.
	We have used the following estimate on the flow $R(e^{tX_\lambda})$ (for $t\geq 0$)~:
	$$\norm{\frac{d^N}{dx^N}a((x, \theta)e^{tX_\lambda})} \leq
		e^{-tN \min_{\alpha\in\Delta^+}\alpha(X_\lambda)}
		\norm{\frac{d^N}{dx^N}a((x, \theta)}$$
	and we have denoted $x^-=\max(-x, 0).$  

	\begin{rem}In what follows we will always have $\lambda\in\supp(\chi)$, where by assumption $\chi$ is supported on a tubular neighbourhood of size $\epsilon $ of $\cE_{\Lambda_\infty}$, and $\alpha(\Lambda_\infty)\geq 0$ for $\alpha\in \Delta^+$. For such $\lambda$  we have $\alpha(X_\lambda)\geq -\eps$ for all $\alpha\in\Delta^+$. 
	%\spade j'ai pas l'impression de l'avoir dit
	We see that our approximation method makes sense if $t$ is restricted by $\hbar^M  e^{(2M+\bar d-2)t\epsilon} \ll 1$. Since $\eps$ can be chosen arbitrarily small, we can assume that the WKB approximation is good for $t\leq 3\cK|\log\hbar|$.
	\end{rem}

	\begin{rem} On the quotient $\bY=\Gamma\backslash \bS$, the same method applies to find an approximate
	solution of $U^t \Pi_\Gamma u(0)$ in the form $\Pi_\Gamma u(t)$, with the same bound
	\begin{equation}\norm{\Pi_\Gamma u(t)-U^t \Pi_\Gamma u (0)}_{L^2(\bY)} \leq C t\hbar^M e^{\eps t(2M+\bar d-2)}  \left[  \sum_{k=0}^{M-1}\norm{a_{k}(0)}_{C^{2(M-k)+\bar d -2}}\right],
	\end{equation}
	provided that the projection $\bS\To \bY$ is bijective when restricted to the support of $a_\hbar(t)$. If $\lambda$ stays in a compact set and if the support of $a_\hbar(0)$ has small enough diameter $\eps$, this condition will be satisfied
	in a time interval $t\in [0, T_0]$. In the applications below, we may and will always assume that $\eta <T_0$.
	\end{rem}

	%changer N en M 
	 
	We can iterate the previous WKB construction $T$ times to get the following description of the action of $\cU_\chi$ on $\Pi_\Gamma Q'_{\omega_{-T}} e_{i\hbar^{-1}\lambda, \theta}$ (the induction argument to control the remainders at each step is the same as in~\cite{AN07} and we won't repeat it here):
	\begin{prop}\label{p:ansatzforU1} 
	\begin{equation}\label{e:prop6}\cU_\chi(\Pi_\Gamma Q'_{\omega_{-T}}e_{i\hbar^{-1}\lambda, \theta})=
	\Pi_\Gamma\left[e^{\frac{iT\eta H(\lambda)}\hbar}e_{i\hbar^{-1}\lambda, \theta}A_M^{(T)} (\bullet, \theta, \lambda)
	\right]+\cO_{L^2(\bY)}(\hbar^M)\norm{ Q^{'}_{\omega_{-T}}e_{i\hbar^{-1}\lambda, \theta}}_{L^2 (\bS)}
	\end{equation}
	where
$$A_M^{(T)} (x, \theta, \lambda)=\sum_{k=0}^{M-1}\hbar^k a_k^{(T)}(x, \theta, \lambda).$$
The function  $a_0^{(T)}(x, \theta, \lambda)$ is equal to
$$a_0^{(T)}(x, \theta, \lambda)=\chi(\lambda)\,P^{1/2}_{\omega_0}(x)P_{\omega_{-1}}((x, \theta)e^{\eta X_\lambda})
P_{\omega_{-2}}((x, \theta)e^{2\eta X_\lambda})\ldots Q_{\omega_{-T}}((x, \theta)e^{T\eta X_\lambda}),$$
where we have lifted the functions $P_\omega$ (originally defined on $G/K$) to $G/M=G/K\times G/P_0$. The functions $a_k^{(T)}$ have the same support as $a_0^{(T)}$. Moreover, 
if we consider $a_k^{(T)}$ as a function of $(x, \theta)$, that is, as a function on $G/M$, we have the following bound
 $$\norm{Z_\alpha^m a_k^{(T)}}\leq P_{k, m, Z_\alpha}(T)\sup_{j=0,\ldots T}\{e^{-(m+2k)j\eta\,\, \alpha(X_\lambda)}\}$$
 if $Z_\alpha$ belongs to $\fG_\alpha$ ($P_{k, m, Z_\alpha}(T)$ is polynomial in $T$). In particular, for $\alpha\in\Delta^+$,
 $$\norm{Z_\alpha^m a_k^{(T)}}\leq P_{k, m, Z_\alpha}(T) e^{(m+2k)T\eta\,\,  \eps}$$

\end{prop}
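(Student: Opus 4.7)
The plan is to prove the formula by induction on $T$, iterating the one-step WKB Ansatz of Section~\ref{s:ansatz} at each stage. Suppose that at step $j\geq 0$ we have already established
$$U^{j\eta} P_{\omega_{-j+1}}\cdots U^\eta P_{\omega_{-T}}\bigl(\Pi_\Gamma Q'_{\omega_{-T}} e_{i\hbar^{-1}\lambda,\theta}\bigr) = \Pi_\Gamma\!\left[e^{ij\eta H(\lambda)/\hbar}\, e_{i\hbar^{-1}\lambda,\theta}\, A_M^{(j)}(\,\cdot\,,\theta,\lambda)\right] + r_j,$$
with $A_M^{(j)}=\sum_{k<M}\hbar^k a_k^{(j)}$ a smooth amplitude on $G/M$, and $\|r_j\|_{L^2(\bY)}$ controlled inductively. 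Multiplication by $P_{\omega_{-(j+1)}}$ on $\bY$ lifts upstairs to multiplication by $Q_{\omega_{-(j+1)}}$, because, by the choice of the initial cutoff $Q'_{\omega_{-T}}$ together with small enough $\eta$ and $\eps$, the support of $a_k^{(j)}$ lies in the preimage under $R(e^{j\eta X_\lambda})$ of a single fundamental domain. The one-step Ansatz then produces a new WKB state whose leading amplitude is $R(e^{\eta X_\lambda})$ applied to $Q_{\omega_{-(j+1)}}\cdot a_0^{(j)}$, whose higher-order amplitudes are generated from the lower-order ones via the transport equations~\eqref{e:mainBKW}, and whose remainder is controlled by the bound \eqref{e:remainder}. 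Unfolding the recursion and applying $\Op(\chi)$ at the end — which, by Lemma~\ref{l:crucial}, multiplies the leading amplitude by $\chi(\lambda)$ modulo a correction of lower order in $\hbar$ — gives the stated expression for $a_0^{(T)}$ and the decomposition $A_M^{(T)}$.

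For the derivative bound, I would use the identity
$$Z_\alpha\bigl[f\circ R(e^{j\eta X_\lambda})\bigr](g) = e^{-j\eta\alpha(X_\lambda)}\,(Z_\alpha f)\bigl(g\, e^{j\eta X_\lambda}\bigr), \qquad Z_\alpha\in\fG_\alpha,$$
which follows from $\Ad(e^{-j\eta X_\lambda})Z_\alpha = e^{-j\eta\alpha(X_\lambda)} Z_\alpha$. Applied to the product $a_0^{(T)} = \chi(\lambda)\prod_{j}P_{\omega_{-j}}\circ R(e^{j\eta X_\lambda})$, this gives the $k=0$ bound; the polynomial prefactor $P_{k,m,Z_\alpha}(T)$ arises from distributing the $m$ derivatives among the $T$ transported factors. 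The extra factor $e^{-2kj\eta\alpha(X_\lambda)}$ for higher amplitudes $a_k^{(j)}$ reflects that, by the transport equations, each increment of $k$ introduces two more derivatives along $\fN$-directions (the differential operators $D_\ell$ lie in $U(\fN_\IC)U(\fA_\IC)$ and have order $\leq \bar d$, and are composed $k$ times along the flow), as in the analogous step of~\cite{AN07}.

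The main obstacle is to prevent the accumulated remainder $\sum_j r_j$ from swamping the principal term once $T\eta \sim \cK|\log\hbar|$. The essential point is that $\chi$ is supported in a tubular neighbourhood of $\cE_{\Lambda_\infty}$ of size $\eps$, and $\alpha(X_{\Lambda_\infty})\geq 0$ for $\alpha\in\Delta^+$, so $\alpha(X_\lambda)\geq -\eps$ uniformly on the support of the amplitudes. By the derivative bound above, the Sobolev norms $\|a_k^{(j)}\|_{C^{2(M-k)+\bar d-2}}$ grow at most as $e^{O(T\eta\eps)}$, so choosing $\eps$ small enough depending on $M$, $\cK$ and the Lyapunov data makes $\hbar^M\cdot e^{O(T\eta\eps)} = o(\hbar^{M-1})$ and the induction closes. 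The combinatorial details of this accumulation are identical to those in \cite[§3]{AN07}, which is why the authors will not reproduce them; the only genuinely new input is Lemma~\ref{l:crucial}, which records how $\Op_\hbar(H)$ acts on WKB states in the locally symmetric setting.
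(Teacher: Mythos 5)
Your proposal is correct and follows the same route the paper intends: iterate the one-step WKB Ansatz of Section~\ref{s:ansatz} $T$ times, tracking the amplitudes via the transport equations~\eqref{e:mainBKW}, using the $\Ad$-eigenvalue identity to propagate the derivative bounds, and controlling the accumulated remainder via the $e^{O(T\eta\eps)}$ growth of the relevant $C^l$-norms — which is exactly the "induction argument the same as in [AN07]" the paper defers to. One small imprecision: the fact that applying $\Op(\chi)$ to a WKB state $b\, e_{i\hbar^{-1}\lambda,\theta}$ multiplies the leading amplitude by $\chi(\lambda)$ modulo $O(\hbar)$ is a standard consequence of the Helgason-Fourier pseudodifferential calculus of Section~\ref{s:PDO}, not of Lemma~\ref{l:crucial} (which is specific to $\Op_\hbar(H)$ with $H\in\cH$); but this does not affect the substance of the argument.
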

The energy parameter $\lambda$ will always stay $\eps$-close to $\Lambda_\infty$.
Recall that we denote by the same letter $\eps$ the diameter of the support of each $Q_\omega$.
We choose $\eps$ and $\eta$ (the time step) small enough to ensure the following: there exists $\gamma=\gamma_{\omega_{-T},\ldots, \omega_0}\in \Gamma$ (independent of $\theta$ or $\lambda$) such that 
\begin{equation}\label{e:defgamma}a_0^{(T)}(x, \theta, \lambda)=\chi(\lambda)\,Q^{1/2}_{\omega_0}\!\!\circ \!\gamma\,\,^{-1}(x)P_{\omega_{-1}}((x, \theta)e^{\eta X_\lambda})
P_{\omega_{-2}}((x, \theta)e^{2\eta X_\lambda})\ldots Q_{\omega_{-T}}((x, \theta)e^{T\eta X_\lambda}).
\end{equation}
This means that the function $a_0^{(T)}(\bullet, \theta, \lambda)$ is supported in a single connected component of the support of $P^{1/2}_{\omega_0}$.

We will also use the following variant: 
\begin{prop}\label{p:ansatzforU2} Let $\gamma=\gamma_{\omega_{-T},\ldots, \omega_0}$.
$$ \cU_\chi ( Q^{'}_{\omega_{-T}} \!\!\circ \!\gamma\,\, \,\,\,\,e_{i\hbar^{-1}\lambda, \theta})
= 
e^{\frac{iT\eta H(\lambda)}\hbar}e_{i\hbar^{-1}\lambda, \theta}(x)A_M^{(T)} \!\!\circ \!\gamma\,\,(x, \theta, \lambda)
+\cO(\hbar^M)\norm{ Q^{'}_{\omega_{-T}} \!\!\circ \!\gamma\,\, e_{i\hbar^{-1}\lambda, \theta}}
 $$where
$$A_M^{(T)} (x, \theta, \lambda)=\sum_{k=0}^{M-1}\hbar^k a_k^{(T)}(x, \theta, \lambda).$$
%The function  $a_0^{(T)}(x, \theta, \lambda)$ is equal to
%\begin{eqnarray}a_0^{(T)}(x, \theta, \lambda)&=&P_{\omega_0}(x)P_{\omega_{-1}}((x, \theta)e^{\eta X_\lambda})
%P_{\omega_{-2}}((x, \theta)e^{2\eta X_\lambda})\ldots Q_{\omega_{-T}}((x, \theta)e^{T\eta X_\lambda})\\
%&=& 
%Q_{\omega_0}\!\!\circ \!\gamma\,\,^{-1}(x)P_{\omega_{-1}}((x, \theta)e^{\eta X_\lambda})
%P_{\omega_{-2}}((x, \theta)e^{2\eta X_\lambda})\ldots Q_{\omega_{-T}}\!\!\circ \!\gamma\,\, ((x, \theta)e^{T\eta X_\lambda}).
%\end{eqnarray}
%The functions $a_k^{(T)}$ have the same support as $a_0^{(T)}$,  and satisfy the same estimates as in Proposition \ref{p:ansatzforU1}. 
  \end{prop}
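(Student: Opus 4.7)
The plan is to mimic, step-for-step, the WKB iteration that proves Proposition \ref{p:ansatzforU1}, but starting now from the $\gamma$-translated initial state $Q'_{\omega_{-T}}\circ\gamma\cdot e_{i\hbar^{-1}\lambda, \theta}$. The key preliminary observation is that the operator $\cU_\chi$ is well-defined not only on $L^2(\bY)$ but also on $L^2(\bS)$: the cutoffs $P_{\omega_j}$ are $\Gamma$-invariant multiplication operators on $\bS$, and the propagator $U^\eta=\exp(i\hbar^{-1}\eta\Op_\hbar(H))$ lifts to a $G$-invariant operator on $L^2(\bS)$, so $\cU_\chi$ is in particular $\Gamma$-equivariant. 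This equivariance is what makes the proof essentially a relabelling of Proposition \ref{p:ansatzforU1}.

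Concretely, I would run the WKB Ansatz of \S\ref{s:ansatz} for $T$ steps, starting from $Q'_{\omega_{-T}}\circ\gamma\cdot e_{i\hbar^{-1}\lambda,\theta}$: after each propagation by $U^\eta$, the amplitude is updated by the transport equations \eqref{e:mainBKW}, and we then multiply by the next cutoff $P_{\omega_j}$. For $\eta$ and $\eps$ small enough — exactly the smallness hypotheses made just after \eqref{e:defgamma} — the evolving amplitude stays supported in a set of diameter $O(\eps)$, so that when we multiply by $P_{\omega_j}=\sum_{\gamma'\in\Gamma}Q_{\omega_j}\circ\gamma'^{-1}$ only a single translate in the sum meets the support. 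Tracking inductively which translate is selected at each step yields a cumulative element of $\Gamma$; by construction of $\gamma_{\omega_{-T},\ldots,\omega_0}$ in \eqref{e:defgamma} as the element that places the final amplitude in a prescribed connected component of $\mathrm{supp}\,P^{1/2}_{\omega_0}$, this cumulative element is exactly $\gamma$. Therefore the $T$-step iteration produces the $\gamma$-translate of the iteration computed in Proposition \ref{p:ansatzforU1}: the leading amplitude is $a_0^{(T)}\circ\gamma$, and by the analogous computation at each order in $\hbar$ we get $A_M^{(T)}\circ\gamma$.

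The remainder analysis is identical to the one leading to Proposition \ref{p:ansatzforU1}: the bound in \eqref{e:remainder} depends only on $C^l$-norms of the WKB amplitudes and on the Lyapunov weights $\alpha(X_\lambda)$, all of which are invariant under left translation by $\gamma\in\Gamma$. This yields the claimed $\cO(\hbar^M)\,\|Q'_{\omega_{-T}}\circ\gamma\,e_{i\hbar^{-1}\lambda,\theta}\|$ error term.

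The main technical obstacle I expect is the bookkeeping required to verify that the group element produced by the iterative support-tracking really coincides with the $\gamma$ of \eqref{e:defgamma} and not with some conjugate or inverse. This is where one must exploit the disjointness of distinct $\Gamma$-translates of $\mathrm{supp}\,Q_{\omega_j}$ together with an upper bound on the displacement of the classical flow in time $\eta$; the smallness of $\eps$ and $\eta$ must be calibrated simultaneously against the injectivity radius of $\bY$ to guarantee that at each step the correct translate is unambiguously selected throughout the $T\eta\leq\cK|\log\hbar|$ window.
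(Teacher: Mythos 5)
Your proof is essentially correct and reconstructs the argument the paper leaves implicit: Proposition \ref{p:ansatzforU2} is stated with no proof, merely as a ``variant'' of Proposition \ref{p:ansatzforU1}, and re-running the WKB iteration on $\bS$ with the $\gamma$-shifted initial amplitude is indeed the right way to justify it. The remainder analysis and the smallness hypotheses on $\eps$, $\eta$ are correctly identified as carrying over unchanged.

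One framing caution worth recording. The input $Q'_{\omega_{-T}}\!\circ\gamma\cdot e_{i\hbar^{-1}\lambda,\theta}$ is \emph{not} the left $\gamma$-translate of $Q'_{\omega_{-T}}e_{i\hbar^{-1}\lambda,\theta}$: the plane wave is left untranslated, and $e_{i\hbar^{-1}\lambda,\theta}\circ\gamma$ is a scalar multiple of $e_{i\hbar^{-1}\lambda,\gamma^{-1}\theta}$, not of $e_{i\hbar^{-1}\lambda,\theta}$. So one cannot literally reduce Proposition \ref{p:ansatzforU2} to Proposition \ref{p:ansatzforU1} by the identity $\cU_\chi(f\circ\gamma)=(\cU_\chi f)\circ\gamma$; your first paragraph slightly overstates the role of equivariance. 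What the $\Gamma$-equivariance on $\bS$ really buys, and what your second paragraph uses correctly, is the identity $P_{\omega_j}(x)=P_{\omega_j}(\gamma x)$ for each cutoff. Writing out the iterated leading amplitude produced by the WKB recursion applied to the shifted datum gives
\begin{equation*}
\chi(\lambda)\,P_{\omega_0}^{1/2}(x)\,P_{\omega_{-1}}\bigl((x,\theta)e^{\eta X_\lambda}\bigr)\cdots P_{\omega_{-T+1}}\bigl((x,\theta)e^{(T-1)\eta X_\lambda}\bigr)\,Q_{\omega_{-T}}\bigl(\gamma(x,\theta)e^{T\eta X_\lambda}\bigr),
\end{equation*}
and replacing each $P_{\omega_j}(\cdot)$ by $P_{\omega_j}(\gamma\cdot)$ (allowed since the left $\Gamma$-action commutes with the right $A$-action) makes every factor a function of $\gamma x$, so the expression is precisely $a_0^{(T)}\circ\gamma(x,\theta,\lambda)$ with $a_0^{(T)}$ as in \eqref{e:defgamma}. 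This simultaneous insertion of $\gamma$ also disposes of the ``bookkeeping'' worry in your last paragraph: no step-by-step tracking of which translate is selected is needed, and the uniqueness of the surviving $\Gamma$-translate of $\supp Q_{\omega_0}$ and $\supp Q_{\omega_{-T}}$ follows from the same smallness of $\eps$ and $\eta$ (relative to the injectivity radius of the compact $\bY$) used to define $\gamma_{\omega_{-T},\ldots,\omega_0}$ in the first place.
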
 
  
%\begin{rem}
%The function $F_ \lambda$ defined in Remark \ref{r:quasiinvariance} has the property that $|F_\lambda(ge^{-tX_\lambda}M)|=|F_\lambda(gM)|e^{-t\rho(X_\lambda)}.$ This implies that
%We shall not use this fact, but it is also possible to see that
%$$\sup_x |Q_{\omega_{-T}}((x, \theta)e^{T\eta X_\lambda})e_{i\hbar^{-1}\lambda, \theta}(x)|=\sup_x |Q_{\omega_{-T}}(x, \theta)e_{i\hbar^{-1}\lambda, \theta}(x)|e^{-t\rho(X_\lambda)}.$$
%Comparing with \eqref{e:prop6} and \eqref{e:defgamma}, and noting that the functions
%$Q^{1/2}_{\omega_0}\!\!\circ \!\gamma\,\,^{-1}(x)  $ are supported on sets of fixed volume, we see that
 %$$\norm{\cU_\chi(\Pi_\Gamma Q'_{\omega_{-T}}e_{i\hbar^{-1}\lambda, \theta})}_{L^2(\bY)}\leq
 %C e^{-T\eta \rho( X_\lambda)}\norm{\Pi_\Gamma Q'_{\omega_{-T}}e_{i\hbar^{-1}\lambda, \theta}}_{L^2(\bY)}.$$
 %In the same way,
 %$$\norm{\cU_\chi ( Q^{'}_{\omega_{-T}} \!\!\circ \!\gamma\,\, e_{i\hbar^{-1}\lambda, \theta})}_{L^2(\bS)}\leq C
 %e^{-T\eta\rho( X_\lambda)} \norm{ Q^{'}_{\omega_{-T}} \!\!\circ \!\gamma\,\, e_{i\hbar^{-1}\lambda, \theta}}_{L^2(\bS)}.$$
 %\end{rem}

\begin{rem}\label{rem:stable}
For the operator $\cS$, analogous results can be obtained if we replace everywhere $\lambda$ by $\wl.\lambda$, $-t$ by $+t$, and the label
$\omega_{-j}$ by $\omega_{+j}$.
\end{rem}

\begin{rem}Let $u, v\in L^2(\bY)$. We explain how the previous Ansatz can be used to estimate the scalar
product $\left\la v, \cU_\chi u\right\ra_{L^2(\bY)}$ (up to a small error).
This is done by decomposing $u$ and $v$, locally, into a combination of the functions $e_{i\hbar^{-1}\lambda, \theta}$ (using the Helgason-Fourier transform), and inputting our Ansatz into this decomposition.

 In more detail, we note that $P_{\omega_{-T}}=  P_{\omega_{-T}}\Pi_\Gamma  Q^{'2}_{\omega_{-T}}$, so that
$\cU_\chi u=
 \cU_\chi \Pi_\Gamma  Q^{'2}_{\omega_{-T}} u$. We use the Fourier decomposition to write
$$Q^{'2}_{\omega_{-T}}u(x)=Q'_{\omega_{-T}}(x)
\int_{\theta\in G/P_0, \lambda\in   \overline{ C_\Pi}}\widehat{Q'_{\omega_{-T}}u}(\lambda, \theta)
e_{i\hbar^{-1}\lambda, \theta}(x)  d\theta |c_\hbar(\lambda)|^{-2}d\lambda.
$$
By Cauchy-Schwarz and the asymptotics of the $c$-function (Remark \ref{r:GK}), we note that
$$\int_{\chi(\lambda)\not=0}|\widehat{Q'_{\omega_{-T}}u}(\lambda, \theta)|d\theta |c_\hbar(\lambda)|^{-2}d\lambda =\cO(\hbar^{-d/2})\norm{u}_{L^2(\bY)},$$
and write
\begin{multline}\label{e:oneofthethings}\left\la v, \cU_\chi u\right\ra_{L^2(\bY)}= 
\left\la v, \cU_\chi \Pi_\Gamma  Q^{'2}_{\omega_{-T}} u\right\ra_{L^2(\bY)} \\
=
\int_{  \chi(\lambda)\not=0 }\widehat{Q'_{\omega_{-T}}u}(\lambda, \theta)
\left\la v, \cU_\chi \Pi_\Gamma Q'_{\omega_{-T}}e_{i\hbar^{-1}\lambda, \theta}\right\ra  d\theta |c_\hbar(\lambda)|^{-2}d\lambda  +\cO(\hbar^\infty)\norm{u}_{L^2(\bY)}\norm{v}_{L^2(\bY)}.
\end{multline}
We now use Proposition \ref{p:ansatzforU1} to replace $\cU_\chi$ by the Ansatz,
\begin{eqnarray*}
\left\la v, \cU_\chi \Pi_\Gamma Q'_{\omega_{-T}} e_{i\hbar^{-1}\lambda, \theta}\right\ra_{L^2(\bY)}&=&
\left\la v, e^{\frac{iT\eta H(\lambda)}\hbar}e_{i\hbar^{-1}\lambda, \theta}\,\,\,\,A_M^{(T)} (\bullet, \theta, \lambda)\right\ra_{L^2(\bS)}+\cO(\hbar^M)\norm{v}_{L^2(\bY)}\\
&=& \left\la Q'_{\omega_0}\!\!\circ \!\gamma\,\,^{-1} \,\,.\,\,v,\, e^{\frac{iT\eta H(\lambda)}\hbar}e_{i\hbar^{-1}\lambda, \theta}\,\,\,\,A_M^{(T)} (\bullet, \theta, \lambda)\right\ra_{L^2(\bS)}+\cO(\hbar^M)\norm{v}_{L^2(\bY)}\\
&=& \left\la Q'_{\omega_0} v,\, e^{\frac{iT\eta H(\lambda)}\hbar}e_{i\hbar^{-1}\lambda, \theta}\circ \!\gamma\,\,\,\,\,\, A_M^{(T)} (\gamma \bullet, \theta, \lambda)\right\ra_{L^2(\bS)}+\cO(\hbar^M)\norm{v}_{L^2(\bY)}
%\\
%&=& \left\la  Q'_{\omega_0} v,\,\, \cU_\chi [ Q'_{\omega_{-T}} \!\!\circ \gamma\,\, \,\,\,\,e_{i\hbar^{-1}\lambda, \theta}\circ \gamma\,\,]\right\ra_{L^2(\bS)} +\cO(\hbar^M)\norm{v}_{L^2(\bY)}
\end{eqnarray*}
for $\gamma=\gamma_{\omega_{-T},\ldots, \omega_0}$
defined above.
Thus,
\begin{multline}
\left\la v, \cU_\chi u\right\ra_{L^2(\bY)}
=\int_{   \chi(\lambda)\not=0}\widehat{Q'_{\omega_{-T}}u}(\lambda, \theta)
 \left\la  Q'_{\omega_0} v,e^{\frac{iT\eta H(\lambda)}\hbar}e_{i\hbar^{-1}\lambda, \theta}\circ \!\gamma\,\,\,\,\,\, A_M^{(T)} (\gamma \bullet, \theta, \lambda)\right\ra_{L^2(\bS)}   d\theta |c_\hbar(\lambda)|^{-2}d\lambda 
\\+\cO(\hbar^{M-d/2})\norm{v}_{L^2(\bY)}\norm{u}_{L^2(\bY)}.
%\\
%=\left\la  Q'_{\omega_0} v, \cU_\chi [  Q^{'2}_{\omega_{-T}} \!\!\circ \!\gamma\,\,\, u]\right\ra_{L^2(\bS)} +\cO(\hbar^{M-d/2})\norm{v}_{L^2(\bY)}\norm{u}_{L^2(\bY)},
\end{multline}
In this last line we see that replacing the exact expression of $\cU_\chi$ by the Ansatz induces an error of $\cO(\hbar^{M-d/2})\norm{v}_{L^2(\bY)}\norm{u}_{L^2(\bY)}$. We will take $M$ very large,
depending on the constant $\cK$ in Theorem \ref{t:main}, so that the error $\cO(\hbar^{M-d/2})$
is negligible compared to the bound announced in the theorem.
\end{rem}

%%%%%%%%%%%%%%%%%%%%%%%%%%%%%%%%%%%%%%%%%%%%%%%%%%%
\section{The Cotlar--Stein argument. \label{s:CS}}
%%%%%%%%%%%%%%%%%%%%%%%%%%%%%%%%%%%%%%%%%%%%%%%%%%

We now use the previous approximations of $\cU_\chi$ and $\cS$ to estimate the norm
of $\cP$. This is done in a much finer, and more technical manner, than in \cite{An, AN07}, because we want to eliminate the slowly expanding/contracting directions.

\subsection{The Cotlar-Stein lemma}
\begin{lem}Let $E, F$ be two Hilbert spaces. Let $(A_\alpha)\in \cL(E, F)$ be a countable family
of bounded linear operators from $E$ to $F$. Assume that for some $R>0$ we have
$$\sup_{\alpha}\sum_{\beta}\norm{A^*_\alpha A_\beta}^{\frac12}\leq R$$ and
$$\sup_{\alpha}\sum_{\beta}\norm{A_\alpha A^*_\beta}^{\frac12}\leq R$$
Then $A=\sum_{\alpha}A_\alpha$ converges strongly and $A$ is a bounded operator with $\norm{A}\leq R$.
\end{lem}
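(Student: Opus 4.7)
The plan is to prove the norm bound first for finite partial sums (uniformly) and then deduce strong convergence of the whole series. Fix a finite subset $J$ of the index set and set $S_J=\sum_{\alpha\in J}A_\alpha$. Because $S_J^*S_J$ is self-adjoint and non-negative, for any positive integer $k$ one has the key identity
\begin{equation*}
\|S_J\|^{2k}=\|(S_J^*S_J)^k\|.
\end{equation*}
Expanding yields a sum over multi-indices $\vec\alpha=(\alpha_1,\ldots,\alpha_{2k})\in J^{2k}$ of products
$T_{\vec\alpha}=A_{\alpha_1}^*A_{\alpha_2}A_{\alpha_3}^*A_{\alpha_4}\cdots A_{\alpha_{2k-1}}^*A_{\alpha_{2k}}$,
so $\|(S_J^*S_J)^k\|\leq\sum_{\vec\alpha}\|T_{\vec\alpha}\|$.

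The core trick is to estimate each $\|T_{\vec\alpha}\|$ in two ways, by pairing consecutive factors with two different parities. Grouping $(A_{\alpha_{2j-1}}^*A_{\alpha_{2j}})$ gives $\|T_{\vec\alpha}\|\leq\prod_{j=1}^{k}\|A_{\alpha_{2j-1}}^*A_{\alpha_{2j}}\|$, while grouping $(A_{\alpha_{2j}}A_{\alpha_{2j+1}}^*)$ with endpoint singletons gives $\|T_{\vec\alpha}\|\leq\|A_{\alpha_1}\|\,\|A_{\alpha_{2k}}\|\,\prod_{j=1}^{k-1}\|A_{\alpha_{2j}}A_{\alpha_{2j+1}}^*\|$. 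Taking the geometric mean I obtain
\begin{equation*}
\|T_{\vec\alpha}\|\leq\|A_{\alpha_1}\|^{1/2}\|A_{\alpha_{2k}}\|^{1/2}\prod_{j=1}^{2k-1}\|C_j(\vec\alpha)\|^{1/2},
\end{equation*}
where $C_j$ is $A_{\alpha_j}^*A_{\alpha_{j+1}}$ or $A_{\alpha_j}A_{\alpha_{j+1}}^*$ according to the parity of $j$, so that every consecutive pair appears with exponent $1/2$. Summing iteratively from $\alpha_{2k}$ down to $\alpha_2$ and using the two hypotheses alternately, each index sum contributes at most $R$, and $\|A_{\alpha_1}\|^{1/2},\|A_{\alpha_{2k}}\|^{1/2}\leq R^{1/2}$ (since $\|A_\alpha\|=\|A_\alpha^*A_\alpha\|^{1/2}\leq R$ from the $\beta=\alpha$ term in the hypothesis). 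The only remaining free index is $\alpha_1$, giving a bare factor of $\#J$; altogether $\sum_{\vec\alpha}\|T_{\vec\alpha}\|\leq (\#J)\,R^{2k}$, hence $\|S_J\|\leq(\#J)^{1/(2k)}R$. Letting $k\to\infty$ yields $\|S_J\|\leq R$, uniformly in the finite set $J$.

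For strong convergence, enumerate the family as $(A_n)_{n\geq 1}$ and let $S_N=\sum_{n\leq N}A_n$. On $\bigcap_n\ker A_n$ we have $S_N u=0$ for all $N$, so convergence is trivial. For $u=A_m^*v$, the series $\sum_n A_n u$ converges \emph{absolutely}: indeed, from $\sum_n\|A_nA_m^*\|^{1/2}\leq R$ and the elementary inequality $\sum_n a_n\leq(\sum_n a_n^{1/2})^2$ for non-negative $a_n$, one gets $\sum_n\|A_n A_m^*\|\leq R^2$, hence $\sum_n\|A_n u\|\leq R^2\|v\|$. By linearity convergence extends to the span of all such vectors, whose closure is exactly the orthogonal complement of $\bigcap_n\ker A_n$. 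Combining with the uniform bound $\|S_N\|\leq R$, a standard density and equicontinuity argument extends strong convergence to all of $E$, and the limit operator $A$ satisfies $\|A\|\leq R$.

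The principal content is the geometric-mean step: it is what converts a naive estimate (which would degrade like $(\#J)^{2}$ instead of $\#J$) into one where each pair of consecutive indices is summable by hypothesis. Once that is in place, the remainder of the argument is essentially bookkeeping; the strong-convergence half is somewhat subtle but only because one must identify the dense subspace on which absolute convergence holds.
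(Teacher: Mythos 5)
Your proof is correct. Note that the paper itself does not prove this lemma; it simply cites Dimassi--Sj\"ostrand~\cite{DS99} for the statement. Your argument is the standard Cotlar--Stein proof: the finite-sum bound via the expansion of $(S_J^*S_J)^k$, the geometric-mean interleaving of the two pairings so that every consecutive pair of indices is controlled by the hypotheses, the $(\#J)^{1/(2k)}\to 1$ limit, and then strong convergence obtained first on $\bigcap_n\ker A_n$ (trivially) and on $\operatorname{span}\{A_m^*v\}$ (by the absolute-convergence estimate $\sum_n\|A_nA_m^*\|\le R^2$), then extended to the whole space by density together with the uniform bound $\|S_N\|\le R$. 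All the steps check out, including the small points that $\|A_\alpha\|\le R$ follows from the $\beta=\alpha$ term and that the closure of $\sum_m\operatorname{ran}(A_m^*)$ is the orthogonal complement of $\bigcap_m\ker A_m$. This matches the textbook treatment to which the paper defers.
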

We refer for instance to~\cite{DS99} for the proof.

\subsection{A non-stationary phase lemma}
The following lemma is just a version of integration by parts.
\begin{lem} \label{l:NSP}Let $\Omega$ be an open set in a smooth manifold. Let $Z$ be a vector field on $\Omega$ and $\mu$ be a measure
on $\Omega$ with the property that $\int (Zf) d\mu=\int fJd\mu$ for every smooth function $f$ and for some smooth $J$.

Let $S\in C^\infty(\Omega, \IR)$ and
$a\in C_c^\infty(\Omega)$. Assume that $ZS$ does not vanish. Consider the integral
\begin{equation}\label{e:NSP}I_\hbar=\int e^{\frac{iS(x)}\hbar}a(x)d\mu(x).\end{equation}

Then we have
$I_\hbar= i\hbar \int  e^{\frac{iS(x)}\hbar} D_Za (x)d\mu(x)$,
where the operator $D_Z$ is defined
by
$$D_Z a=Z\left(\frac{a}{ZS}\right)-\frac{aJ}{ZS}.$$
If we iterate this formula $n$ times we get
$$I_\hbar =(i\hbar)^n \int  e^{\frac{iS(x)}\hbar} D^n_Za (x)d\mu(x)$$ and
$D^n_Z$ has the form
$$D^n_Z a=\sum_{m\geq n, k+m\leq 2n, \sum l_j\leq n }f_{k,(l_j),m}\frac{Z^k a Z^{l_1} S\ldots
Z^{l_r} S}{(ZS)^m} $$
where the $f_{k,(l_j),m}(x)$ are smooth functions that do not depend on $a$ nor $S$.\end{lem}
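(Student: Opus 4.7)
The plan is a standard oscillatory-integral integration by parts, carried out once to establish the base case and then iterated inductively.

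First I would write down the key identity
\[
Z\bigl(e^{iS(x)/\hbar}\bigr) = \frac{i}{\hbar}\,(ZS)(x)\,e^{iS(x)/\hbar},
\]
which, since $ZS$ does not vanish on the (compact) support of $a$, allows us to solve
\[
e^{iS(x)/\hbar}\,a(x) = \frac{\hbar}{i}\,\frac{a(x)}{(ZS)(x)}\,Z\bigl(e^{iS(x)/\hbar}\bigr).
\]
Substituting into \eqref{e:NSP} and applying the hypothesis $\int (Zf)\,d\mu = \int f J\,d\mu$ to $f=e^{iS/\hbar}\cdot a/(ZS)$, together with the Leibniz rule $Z(e^{iS/\hbar}\cdot a/(ZS)) = Z(e^{iS/\hbar})\cdot a/(ZS) + e^{iS/\hbar}\cdot Z(a/(ZS))$, yields
\[
I_\hbar = -i\hbar \int e^{iS/\hbar}\!\left[\frac{aJ}{ZS} - Z\!\left(\frac{a}{ZS}\right)\right]d\mu
       = i\hbar \int e^{iS/\hbar}\,D_Z a\,d\mu,
\]
with $D_Z$ as defined in the statement. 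This settles the $n=1$ case.

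Next I would prove the iterated formula $I_\hbar = (i\hbar)^n \int e^{iS/\hbar}\,D_Z^n a\,d\mu$ by induction on $n$: since $D_Z a$ is again smooth and compactly supported in $\Omega$, the one-step formula applies to the new integrand with $a$ replaced by $D_Z a$, and this gives the inductive step.

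Finally, to establish the structural form of $D_Z^n a$, I would argue by induction on $n$ that every term produced takes the shape
\[
f_{k,(l_j),m}(x)\,\frac{Z^k a\cdot Z^{l_1}S\cdots Z^{l_r}S}{(ZS)^m}
\]
with the integer constraints $m\geq n$, $k+m\leq 2n$, $\sum_j l_j \leq n$, and with $f_{k,(l_j),m}$ a smooth function depending only on $J$ and its $Z$-derivatives. Applying $D_Z$ to such a term uses $Z$ once: by Leibniz, $Z$ either lands on $Z^k a$ (raising $k$ by $1$), on one of the factors $Z^{l_j}S$ (raising the corresponding $l_j$ by $1$), on $(ZS)^{-m}$ (raising $m$ by $1$ and producing an extra factor $Z(ZS)=Z^2 S$, so that $\sum l_j$ increases by $1$ and $m$ by $1$), or on the coefficient $f_{k,(l_j),m}$ (absorbed into a new smooth coefficient). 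The subtraction of $aJ/(ZS)$ in $D_Z$ only multiplies by a smooth function. A case check shows the three inequalities are preserved at each step, which closes the induction.

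The only delicate point is the bookkeeping for the inequalities $m\geq n$, $k+m\leq 2n$, $\sum l_j\leq n$ under $D_Z$; everything else is routine. The hypothesis that $ZS$ is nonvanishing on $\operatorname{supp} a$ is what makes the denominators $(ZS)^m$ harmless, and compactness of $\operatorname{supp} a$ ensures that the boundary terms in the integration by parts vanish (they are already absent from the hypothesis $\int (Zf)\,d\mu=\int fJ\,d\mu$).
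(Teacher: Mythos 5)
Your overall approach --- solve $e^{iS/\hbar}=\frac{\hbar}{i\,ZS}Z(e^{iS/\hbar})$, integrate by parts via the hypothesis on $\mu$, then iterate --- is exactly the right one, and in fact the paper offers no proof at all (it simply remarks that the lemma ``is just a version of integration by parts''), so your write-up fills a genuine gap. Your derivation of the one-step formula $I_\hbar=i\hbar\int e^{iS/\hbar}D_Za\,d\mu$ is correct, and so is the observation that $D_Za$ is again smooth and compactly supported, which licenses the induction.

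However, your bookkeeping for the structural form of $D_Z^n a$ is not correct, and it hides a real problem with the constraint $\sum_j l_j\le n$. You describe the Leibniz expansion as if $D_Z$ were simply $Z$ applied to the term, but $D_ZT=Z\bigl(\frac{T}{ZS}\bigr)-\frac{TJ}{ZS}$; both pieces divide by $ZS$, so $m$ increases by at least one every step (this is how $m\ge n$ is obtained, which your account does not actually establish). More importantly, when $Z$ falls on a power of $(ZS)^{-1}$ the new numerator factor is $Z^2S$, which contributes $l=2$ to $\sum_j l_j$, not $1$ as you claim. One application already gives
\[
D_Za=\frac{Za}{ZS}-\frac{a\,Z^2S}{(ZS)^2}-\frac{aJ}{ZS},
\]
whose middle term has $\sum_j l_j=2>1=n$, so the inequality $\sum_j l_j\le n$ fails, and for $n=2$ the term $\frac{a\,(Z^2S)^2}{(ZS)^4}$ has $\sum_j l_j=4$. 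The constraint as printed in the lemma therefore cannot be correct as stated; tracking the recursion carefully gives $m\ge n$, $k+m\le 2n$, and $\sum_j l_j\le m\ (\le 2n)$, or alternatively that the \emph{number} $r$ of factors $Z^{l_j}S$ satisfies $r\le n$ with each $l_j\ge 2$. Your proof should either establish one of these correct bounds or flag the apparent misprint; as written, the sentence ``a case check shows the three inequalities are preserved'' asserts something false.

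A smaller point: the subtraction $-TJ/(ZS)$ does not ``only multiply by a smooth function'' --- it also raises $m$ by $1$ --- and the coefficients $f_{k,(l_j),m}$ end up being polynomials in $J$ and its $Z$-derivatives together with combinatorial constants, which is worth saying explicitly since the lemma asserts they are independent of $a$ and $S$.
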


%%%%%%%%%%%%%%%%%%%%%%%%%%%%%%%%%%%%%%%%%%%%%%%%%%
\subsection{Study of several phase functions}
%%%%%%%%%%%%%%%%%%%%%%%%%%%%%%%%%%%%%%%%%%%%%%%%%%%

%$$\cN_{fast}=\{n\in N, H_0(\Ad(n).Y)=0 \mbox{ for all } Y\in \bar\fN_{slow}.$$
%In a neighbourhood of identity, this is a submanifold of $N$, with tangent space at $e$
%$$\{Z\in \fN, H_0([Z, Y])=0 \mbox{ for all } Y\in \bar\fN_{slow}=\fN_{fast}.$$
%To prove that the tangent space coincides with $\fN_{fast}$, consider 
% $Z\in \fN$ such that $H_0([Z, Y])=0$ for all $Y\in \bar\fN_{slow}$. Decompose $Z=\sum Z_\alpha$
% with $Z_\alpha\in\fG_{\alpha}$. Take $Y=\theta(Z_\beta)$: then $H_0([Z, Y])=
%- \left\la Z_\beta, Z_\beta\right\ra H_\beta\in\fA\spadesuit$$
%where $H_\beta\in \fA$ is the coroot and satisfies $\lambda(H_\beta)=\la \lambda, \beta\ra$ for every $\lambda\in \fa^*$. We see that the condition $H_0([Z, Y])=0$ for all $Y\in \bar\fN_{slow}$
%implies that $Z_\beta=0$ for all

\subsubsection{ Sum of two Helgason phase functions} 
%Let $\lambda\in\fA^*$.
%In this section, we will introduce the following notation:
%$$\cN_\lambda=\{n\in N, \alpha.H_0(\Ad(n^{-1}).Y)=0\mbox{ for all }Y\in \bar\fN\mbox{ and for all } \alpha\in\Delta\mbox{ s.t. }\la\lambda, \alpha\ra\not=0.\}$$
%Near identity, it is a submanifold of $N$, with tangent space at identity
%$$ \{Z\in  \fN, \alpha.H_0([Z, Y])=0\mbox{ for all }Y\in \bar\fN\mbox{ and for all } \alpha\in\Delta\mbox{ s.t. }\la\lambda, \alpha\ra\not=0.\}$$
%It is not hard to see that this tangent space is also $\sum_{\alpha\in\Delta^+,\la\lambda, \alpha\ra=0 }\fG_\alpha$. In particular, if $\lambda$ is regular (which will be the case in our discussion), the intersection of $\cN_\lambda$ with a neighbourhood of identity is reduced to a point.
%We also denote
%$$\bar\cN_\lambda=\Ad(w). \cN_{-w.\lambda}$$

\begin{prop}\label{p:crit} (i) Let $g_1P_0, g_2P_0\in G/P_0$ be two points on the boundary. Let $\lambda, \mu\in \overline{ C_\Pi}$ be two elements of the closed nonnegative Weyl chamber. Consider the function on $G/K$,
\begin{equation} \label{e:map}gK\mapsto \lambda.H_0(g_1^{-1}gK)+\mu.H_0(g_2^{-1}gK).
\end{equation}
Then, this map has critical points if and only if $\mu=-\Ad(\wl).\lambda$. 

(ii)  Let $\lambda, \mu\in { C_\Pi}$ be two (regular) elements of the positive Weyl chamber.
Let $g_1P_0, g_2P_0\in G/P_0$ be two points on the boundary, and assume that $g_1^{-1}g_2\in P_0\wl P_0$
(we don't assume here that the conclusion of (i) is satisfied).
Write $g_1^{-1}g_2 = b_1 \wl b_2$ with $b_1, b_2\in P_0$.

Then, the set of critical points for variations of the form
$$t\mapsto  \lambda.H_0(e^{tX}g_1^{-1}gK)+\mu.H_0(e^{tX}g_2^{-1}gK),$$
with $X\in \fN$ is precisely $\{gK, g\in g_1 b_1 A \}$.
Moreover, these critical points are non-degenerate.

%  Or, equivalently,
% $\{gK, g\in g_2 b^{-1}_2 A N^\lambda\}$ where $ N^{\lambda}=\exp(\sum_{\alpha\in\Delta^+, \left\la \lambda,\alpha\right\ra=0} \fG_{\alpha}).$
\end{prop}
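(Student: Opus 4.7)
The plan is to rewrite the function as $\lambda\cdot B(gK,g_1P_0)+\mu\cdot B(gK,g_2P_0)$ plus a constant, where $B$ is the Busemann function (this uses the relation $H_0(g_i^{-1}gK)=B(gK,g_iP_0)$ up to $gK$-independent terms). The $G$-equivariance of $B$ then allows us to translate any prospective critical point to the origin $o=eK$, and $K$-equivariance further allows taking $g_1=e$, so that $g_2P_0=kP_0$ for some $k\in K$.

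For (i), differentiating the Iwasawa decomposition $\fG=\fN\oplus\fA\oplus\fK$ at the identity yields, for $X\in\fP\cong T_o\bS$, the formula $dB(\,\cdot\,,kP_0)|_o(X)=\mathrm{pr}_\fA(\Ad(k^{-1})X)$ where $\mathrm{pr}_\fA$ is projection onto the $\fA$-summand. The vanishing of $dF|_o$ on $\fP$ thus translates, via the Killing-form identification $\fA^*\cong\fA$, to the conditions $\Ad(k)\mu^\sharp\in\fA$ and $\Ad(k)\mu^\sharp=-\lambda^\sharp$. The first condition, assuming $\mu$ regular (the boundary case handled by a limit argument), forces $k\in N_K(\fA)$, so $k$ represents some $w\in W$. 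Since both $\lambda$ and $\mu$ lie in $\overline{C_\Pi}$ while $-\lambda\in\Ad(\wl)\overline{C_\Pi}$, simple transitivity of $W$ on chambers pins down $w=\wl$, yielding $\mu=-\Ad(\wl)\lambda$.

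For (ii), Bruhat gives $g_1^{-1}g_2=b_1\wl b_2$ with $b_1,b_2\in P_0$. The flat $g_1b_1A\cdot o$ has asymptotic chambers $g_1b_1P_0=g_1P_0$ (because $b_1\in P_0$) and $g_1b_1\wl P_0=g_2P_0$ (because $b_2\in P_0$ and $P_0\wl P_0\supset\wl P_0$), so it is the unique maximal flat joining the two boundary points---the expected critical locus. A first-order Iwasawa expansion at a point $g_1b_1a\cdot o$ of this flat, using the root-space decomposition of $\fN$, shows that the $\lambda$- and $\mu$-contributions to the variation along $X\in\fG_\alpha\subset\fN$ are related by the $\wl$-exchange of the boundary points, and therefore cancel; conversely, perturbing off the flat in the $\fN$-direction introduces a non-cancelling term along some positive root space, establishing that the critical set is exactly $g_1b_1A\cdot o$.

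For non-degeneracy, the Hessian of $F$ in the $\fN$-directions at $g_1b_1a\cdot o$ decomposes block-wise along the root spaces $\fG_\alpha$; the $\fG_\alpha$-block is proportional to $\alpha(\lambda^\sharp)$ (the two Busemann terms reinforce at second order, because the $\wl$-reflection of $\alpha$ is a negative root and the sign of $\mu$ relative to $\lambda$ flips accordingly). Regularity of $\lambda\in C_\Pi$ forces $\alpha(\lambda^\sharp)\ne 0$ for every root, so each block, and hence the full Hessian, is non-singular. The main technical obstacle is carrying out the first- and especially second-order Iwasawa expansions cleanly: non-abelianity mixes root spaces through commutators $[\fG_\alpha,\fG_\beta]\subset\fG_{\alpha+\beta}$ and $[\fG_\alpha,\fG_{-\alpha}]\subset\fA$, and one has to verify that the cross-terms either vanish on the flat or contribute only to the diagonal blocks.
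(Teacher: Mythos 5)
Your part (i) is correct and takes a mildly different route from the paper. Both arguments hinge on the same computation, but where the paper factors $g_1^{-1}g_2$ by Bruhat and differentiates along left $\fA$-translates (obtaining the $g$-independent quantity $\lambda(X)+\mu(\Ad(w^{-1})X)$), you translate a putative critical point to $o$ by $G$- and $K$-equivariance and differentiate the Busemann phase there along all $\fP$-directions, arriving at $\Ad(k)\mu^\sharp=-\lambda^\sharp$. These are equivalent reductions. Your ``limit argument'' for singular $\mu$ is avoidable: $\Ad(k)\mu^\sharp\in\fA$ already forces $\mu^\sharp$ and $-\lambda^\sharp$ to lie in the same $W$-orbit (a $K$-orbit meets $\fA$ in a single $W$-orbit), and dominance of $\lambda$ and $\mu$ then identifies the Weyl element as $\wl$.

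Part (ii), however, contains a substantive error and a missing core step. After the standard reduction (replace $g_1 b_1$ by $e$, $g_2 P_0$ by $\wl P_0$), the $\fN$-variations leave the $\lambda$-term $\lambda\cdot H_0(e^{tX}gK)$ \emph{identically constant}, because $e^{tX}\in N$ and $H_0$ is left-$N$-invariant. So there is no ``cancellation'' between the $\lambda$- and $\mu$-contributions at first order, and nothing to ``reinforce'' at second order: both variations come entirely from the $\mu$-term. In particular your claim that the Hessian block along $\fG_\alpha$ is proportional to $\alpha(\lambda^\sharp)$ is wrong. The paper computes it via the Baker--Campbell--Hausdorff step $e^{tY'}=e^{t\theta(Y')}\,e^{\frac{t^2}{2}[Y',\theta(Y')]+O(t^3)}\,e^{t(Y'-\theta(Y'))}$ (with $\theta(Y')\in\fN$ absorbed by left $N$-invariance and $Y'-\theta(Y')\in\fK$), yielding $\mu\bigl([Y',\theta(Y')]\bigr)$ with $Y'\in\fG_{\wl.\alpha}$, i.e.\ $\mu$ paired with the coroot of $\wl.\alpha$; this equals $-\langle\lambda,\alpha\rangle$ only under the extra hypothesis $\mu=-\Ad(\wl)\lambda$, which part (ii) explicitly does \emph{not} assume. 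Finally, the statement that the first variation of the $\mu$-term vanishes exactly on the flat is the heart of the argument; you assert it but do not prove it. The paper does: writing $g=\wl\,a\,n\,K$ it converts the $\fN$-derivative into $\mu\cdot H_0\bigl(\Ad(n^{-1})Y'\bigr)$, $Y'\in\bar\fN$, and uses regularity of $\mu$ to force $n=e$. As it stands, part (ii) is a plausible sketch with an incorrect Hessian and the key first-variation computation missing.
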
 

\begin{rem} The set of critical points is $\{gK, g\in g_1 P_0, g\wl\in g_2P_0\}$, that is, the flat in $G/K$ determined by the two boundary
points $g_1P_0, g_2P_0$.

\end{rem}

\begin{proof} (i) It is enough to consider the case $g_1 = e$.
By the Bruhat decomposition, we know that there exists
a unique $w\in W$ such that $g_2\in BwB$, that is,
$g_2=b_1 w b_2$ for some $b_1, b_2\in B$. The map \eqref{e:map}
has the same critical points as the map
\begin{equation} \label{e:map2}
gK\mapsto \lambda.H_0(gK)+\mu.H_0(w^{-1}b_1^{-1}gK),
\end{equation}
and those are the image under $gK\mapsto b_1gK$ of the critical points of
\begin{equation} \label{e:map3}
gK\mapsto \lambda.H_0(gK)+\mu.H_0(w^{-1}gK).
\end{equation}
For $X\in\fA$ the derivative at $t=0$ of
\begin{equation}\label{e:map4}
t\mapsto \lambda.H_0(e^{tX}gK)+\mu.H_0(w^{-1}e^{tX}gK)
\end{equation}
is $\lambda(X)+\mu(\Ad(w^{-1})X)$.
Thus, for the map \eqref{e:map3} to have critical points, we must have
$$\lambda(X)+\mu(\Ad(w^{-1})X)=0$$ for every $X\in\fA$.
Letting $X$ vary over the dual basis to a positive basis of $\fA^*$,
we see that $\mu=-\Ad(w).\lambda$ is nonnegative, and this is only possible if
$\mu=-\Ad(\wl).\lambda$
(this does not necessarily mean that $w=\wl$ if $\lambda$ is not regular).

(ii) Here we assume that $\mu$ and $\lambda$ are regular, and that we are
in the ``generic'' case where $g_1^{-1}g_2\in P_0\wl P_0$.
Starting from \eqref{e:map3}, we now consider variations of the form
\begin{equation}\label{e:map5}
t\mapsto \lambda.H_0(e^{tX}gK)+\mu.H_0(\wl^{-1}e^{tX}gK)
\end{equation}
for $X\in\fN$. The term
$\lambda.H_0(e^{tX}gK)$ is constant, and it remains to deal with
$\mu.H_0(\wl^{-1}e^{tX}gK)$.  Write $g=\wl anK$, $n\in N, a\in A$,
and denote $Y=\Ad(\wl).X\in \bar\fN$, $Y'=\Ad(a^{-1}) Y$. We have
$$\mu.H_0(\wl^{-1}e^{tX}gK) = \mu. H_0(e^{tY}anK) =
  \mu(a)+\mu.H_0(e^{tY'}nK) = \mu(a)+\mu.H_0(n^{-1}e^{tY'}nK).$$
Hence
$$\frac{d}{dt}\mu. H_0(e^{tY}anK) = \mu. H_0(\Ad(n^{-1})Y').$$
We see that the set of critical points of \eqref{e:map5} is the set of
those points $gK$, with $g=\wl anK$ such that $n$ satisfies
$\mu. H_0(\Ad(n^{-1})Y')=0$ for all $Y'\in\bar\fN$.
Since $\mu$ is regular, one can check that this implies $n=e$.
This proves the first assertion of (ii).
  
Finally, assume that we are at a critical point, that is, $gK=aK$
in \eqref{e:map5}.  We calculate the second derivative at $t=0$ of
$t\mapsto\mu.H_0(\wl^{-1}e^{tX}aK)$ when $X\in \fN.$
We keep the same notation as above for $Y$ and $Y'$.

Let $U=Y'-\theta(Y')\in\fK$.
By the Baker-Campbell-Hausdorff formula, we have 
\begin{equation}\label{e:order2}e^{tY'}=e^{t\theta(Y')+\frac{t^2}2[Y', \theta(Y')]+\cO(t^3)}e^{tU}=
e^{t\theta(Y')}e^{\frac{t^2}2[Y', \theta(Y')]+\cO(t^3)}e^{tU}.
\end{equation}  
 Remember that $\theta(Y')\in \fN$, and that $H_0$ is left-$N$-invariant. This calculation shows that
 the second derivative of  $t\mapsto\mu.H_0(\wl^{-1}e^{tX}aK)$ is the quadratic form
 $$X\mapsto\mu\left([Y', \theta(Y')]\right),$$
 where $Y'=\Ad(a^{-1})\Ad(\wl).X$.
This is a non-degenerate quadratic form if $\mu$ is regular.
\end{proof}

\vspace{.5cm}

\subsubsection{Variations with respect to $\overline{N}$}
In this section we need the decomposition $\fG=\fN\oplus \fA\oplus \fM\oplus \bar\fN$. We will denote
$\pi_\fN$, $\pi_\fA$, $\pi_{\bar\fN}$ the corresponding projections.
We note that $\pi_\fA=H_0$, since $\bar\fN\subset \fN+\fK.$
\begin{lem}  \label{l:n-}Fix $n\in N$ and $a\in A$.
Then there exist two neighbourhoods $V_1, V_2$ of $0$ in $\bar\fN$,
and a diffeomorphism $\Psi=\Psi_{na} \colon V_1\To V_2$
such that
$$e^{-Y_1} na  e^{Y_2}\in NA, Y_1\in V_1, Y_2\in V_2\iff Y_2=\Psi(Y_1).$$
Moreover, the differential at $0$ of $\Psi$ (denoted $\Psi'_0$)
preserves the subalgebra $\lnbs$.
Finally, if we write $e^{-Y} na  e^{\Psi(Y)}=n(Y)a(Y)$, we have
$$a'_0.Y = \pi_\fA[\Ad(na)\Psi'_0(Y)].$$
\end{lem}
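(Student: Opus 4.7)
The plan is to apply the implicit function theorem to the smooth map $F\colon \bar\fN\times\bar\fN\to G$ defined by $F(Y_1,Y_2)=e^{-Y_1}\,na\,e^{Y_2}$. Near $na=F(0,0)\in NA$, the decomposition $\fG=\bar\fN\oplus\fN\oplus\fA\oplus\fM$ and the corresponding local diffeomorphism $(Z,h)\mapsto e^Z h$ from $\bar\fN\times NAM$ into $G$ let me write $F(Y_1,Y_2)=\exp(Z(Y_1,Y_2))\cdot h(Y_1,Y_2)$ with $Z\in\bar\fN$, so that the condition $F\in NA$ reduces to $Z(Y_1,Y_2)=0$ (the $\fM$-component being inessential in the regime of interest, where $\fM$ is essentially discrete, as for $G=SL_n(\IR)$).

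Since $F(Y_1,0)=e^{-Y_1}na$ is already in split form, $\partial_{Y_1}Z|_{(0,0)}(Y)=-Y$; and from $F(0,Y_2)=e^{\Ad(na)Y_2}\cdot na$ one reads $\partial_{Y_2}Z|_{(0,0)}=\pi_{\bar\fN}\circ\Ad(na)|_{\bar\fN}$. I would then check this is invertible: $\Ad(a)$ is diagonal on the root spaces of $\bar\fN$ with nonzero eigenvalues, and $\Ad(n)=e^{\ad X}$ with $X\in\fN$ strictly reduces the absolute height of negative roots, so $\pi_{\bar\fN}\Ad(n)|_{\bar\fN}$ is unipotent in a height-ordered basis. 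The implicit function theorem then produces $\Psi=\Psi_{na}$ with
\begin{equation*}
\Psi'_0\;=\;\bigl(\pi_{\bar\fN}\circ\Ad(na)|_{\bar\fN}\bigr)^{-1}.
\end{equation*}

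To verify $\Psi'_0$ preserves $\lnbs$, it suffices to show $\pi_{\bar\fN}\Ad(na)$ does, since an invertible endomorphism of the finite-dimensional $\bar\fN$ sending $\lnbs$ into itself automatically satisfies $T^{-1}(\lnbs)=\lnbs$. $\Ad(a)$ preserves each $\fG_{\wl\alpha_j}$. For $\Ad(n)$, I expand $\Ad(n)Y=\sum_k\frac{1}{k!}(\ad X)^k Y$ with $Y\in\fG_{\wl\alpha_j}$, $j\leq j_0$; each iterated bracket $(\ad X_{\beta_1})\cdots(\ad X_{\beta_k})Y$ whose $\bar\fN$-component is nonzero lies in some $\fG_{\wl\alpha_l}$ with $\wl\alpha_l=\wl\alpha_j+\beta_1+\cdots+\beta_k$, i.e.\ $\alpha_l=\alpha_j+\sum_i\wl\beta_i$. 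Each $\wl\beta_i$ is a negative root and $X_{\Lambda_\infty}$ lies in the closed positive chamber, so $(\wl\beta_i)(X_{\Lambda_\infty})\leq 0$ and hence $\alpha_l(X_{\Lambda_\infty})\leq\alpha_j(X_{\Lambda_\infty})<\frac{1}{2\cK}$, forcing $l\leq j_0$. This is the main technical step: the work sits entirely in the bookkeeping with the long-element action $\wl$ and the ordering defining $j_0$.

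To derive the last formula, I would differentiate $e^{-tY}na\,e^{t\Psi'_0(Y)}=n(tY)a(tY)$ at $t=0$. Writing $\dot n(0)=nU_1$ and $\dot a(0)=aV_1$ with $U_1\in\fN$, $V_1\in\fA$ under the left-translation identifications $T_nN\cong\fN$ and $T_aA\cong\fA$, then right-multiplying the resulting tangent identity by $(na)^{-1}$, one obtains
\begin{equation*}
-Y+\Ad(na)\Psi'_0(Y)\;=\;\Ad(n)U_1+\Ad(n)V_1\qquad\text{in }\fG.
\end{equation*}
Projecting onto $\fA$: $\pi_\fA(-Y)=0$ since $Y\in\bar\fN$; $\pi_\fA(\Ad(n)U_1)=0$ because $\Ad(n)$ preserves $\fN$; and $\pi_\fA(\Ad(n)V_1)=V_1$ because $[\fN,\fA]\subset\fN$ forces all higher terms of the exponential series to lie in $\fN$. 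The remaining identity $V_1=\pi_\fA[\Ad(na)\Psi'_0(Y)]$ is exactly the claimed formula $a'_0.Y=\pi_\fA[\Ad(na)\Psi'_0(Y)]$.
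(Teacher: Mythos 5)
Your proof is correct and follows essentially the same route as the paper: the implicit function theorem applied to the $\lienb$-component of $e^{-Y_1}na\,e^{Y_2}$, invertibility of $\pi_{\lienb}\Ad(na)|_{\lienb}$, and differentiation of the defining identity for the last formula. The paper's proof is terser; in particular the paper simply asserts that $\Ad(na)$ preserves $\fN\oplus\fA\oplus\fM\oplus\lnbs$ and deduces both the injectivity and the $\lnbs$-preservation from that single invariance (if $\pi_{\lienb}\Ad(na)Y_2=0$ then $Y_2\in\Ad(na)^{-1}(\fN\oplus\fA\oplus\fM)=\fN\oplus\fA\oplus\fM$, forcing $Y_2=0$; and similarly for $\lnbs$), whereas you verify invertibility by a height/eigenvalue argument and then prove the $\lnbs$-invariance directly by the $\wl$-bookkeeping on roots. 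Your root computation is exactly the content the paper leaves implicit, and your caveat about the $\fM$-component (trivial for $SL_n$, and in any case the application only uses right-$NAM$-invariance of the Busemann function) is a legitimate imprecision in the paper's statement rather than a gap in your argument.
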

%remplacer NA par NAM ?\spadesuit
\begin{proof}
We apply the implicit function theorem.  For $Y_1=0$, the differential of
$Y_2\mapsto na  e^{Y_2} (na)^{-1}$ at $Y_2=0$ is $Y_2\mapsto\Ad(na).Y_2$.
What we need to check is the equivalence of
$\pi_{\bar\fN}[\Ad(na).Y_2]=0$ and $Y_2=0$, which is the case since
$\Ad(na)$ preserves $\fN\oplus\fA\oplus\fM$.  So the existence of $\Psi$ is proved, in addition the differential $\Psi'_0$ is defined by
$$Y=\pi_{\bar\fN}[\Ad(na).\Psi'_0.Y]$$ for $Y\in\bar\fN$.  Since $\Ad(na)$ preserves the space $\fN\oplus \fA\oplus \fM\oplus \lnbs$ (without preserving the decomposition, of course), $\Psi'_0.Y$ must belong to $\lnbs$ if $Y$ does.

The last formula is simply obtained by differentiating $e^{-Y} na  e^{\Psi(Y)}=n(Y)a(Y)$.
\end{proof}

For the next lemma we need to recall our two decompositions $\lienb=\sum_{k\leq j_0}\fG_{\wl.\alpha_k}\oplus \sum_{j> j_0}\fG_{\wl.\alpha_k}=\lnbs\oplus\lnbf$ and
$\fN=\sum_{k\leq j_0}\fG_{\alpha_k}\oplus \sum_{j> j_0}\fG_{\alpha_k}=\lns\oplus\lnf$.
The space $\lnf$ is an ideal of $\fN$, and we call $\Nf$ the associated (normal) subgroup.
 
\begin{lem} \label{l:n?}
(i) The set 
$$\{n\in N, H_0(\Ad(n)Y)=0\qquad \forall Y\in \lnbs\}$$
is, near identity, a submanifold of $N$, tangent to $\lnf$.

(ii) If $n$ is close enough to identity, if we write $n=e^{\sum_\alpha T_\alpha}$ with $T_\alpha\in \fG_{\alpha}$ ($\alpha\in\Delta^+$),
and if $\mu\in\fA^*$ is a regular element, we have
$$\left|\mu.H_0\left(\Ad(n)\frac{\theta(T_\beta)}{\norm{\theta(T_\beta)}}\right)\right|\geq C_\mu\norm{T_\beta}$$
with $C_\mu >0$.

%(i) Fix $n=e^Z\in N$, 
%where $Z=Z_{slow}+Z_{fast}$. For any $a\in A$, any
%$n'\in N_{fast}$, and any $Y\in \bar\fN_{slow}$,
%$$\pi_{\fA}[\Ad(ann').Y]=\pi_{\fA}(\Ad(e^{Z_{slow}}Y).$$

%(ii) Assume that $\pi_{\fA}[\Ad(ann').Y]=0$ for all $Y\in \bar\fN_{slow}$. Then $n\in N_{fast}$.
\end{lem}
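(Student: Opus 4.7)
My plan is to linearize both statements at $n=e$ and exploit the root-space decomposition $\fN=\bigoplus_{\alpha\in\Delta^+}\fG_\alpha$, together with the identity that for a nonzero root vector $X\in\fG_\alpha$, $\pi_\fA([X,\theta X])$ is a nonzero scalar multiple of the coroot $H_\alpha\in\fA$ (essentially $-\|X\|^2 H_\alpha$), while $\pi_\fA([X_\alpha,Y_\gamma])=0$ whenever $\alpha+\gamma\ne 0$. This is the one algebraic fact that drives both halves.

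For part (i) I would differentiate the defining constraint at $e$: for $X\in\fN$ and $Y\in\lnbs$,
\[
\frac{d}{dt}\bigg|_{t=0}H_0\bigl(\Ad(e^{tX})Y\bigr)=\pi_\fA([X,Y]).
\]
Letting $Y$ range over root vectors in $\lnbs=\bigoplus_{k\leq j_0}\fG_{\wl\cdot\alpha_k}$ and writing $X=\sum_\alpha X_\alpha$, the joint kernel of these differentials consists of those $X$ whose component $X_\alpha$ vanishes whenever $\alpha\in -\wl\cdot\{\alpha_k:k\leq j_0\}$. The way $\lnbs$ is indexed forces this set of obstructed positive roots to coincide with $\{\alpha_k:k\leq j_0\}$, so the kernel is exactly $\lnf$. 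Since the differential surjects onto a space of dimension $\dim\lnbs=J_0=\dim\fN-\dim\lnf$, the implicit function theorem produces a smooth submanifold near $e$, tangent to $\lnf$.

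For part (ii) I would Taylor-expand $\Ad(n)=\exp(\ad X)$ with $X=\log n=\sum_\alpha T_\alpha$ and examine $\pi_\fA(\Ad(n)\theta(T_\beta))$ term by term. The zeroth-order term contributes zero. In the first-order term the only root combination yielding a nonzero $\pi_\fA$ is $\alpha=\beta$, giving $\pi_\fA([T_\beta,\theta(T_\beta)])=-\|T_\beta\|^2 H_\beta$. After dividing by $\|\theta(T_\beta)\|=\|T_\beta\|$ and applying $\mu$, the leading contribution is $-\|T_\beta\|\mu(H_\beta)$, which is nonzero by regularity. For $(\ad X)^k/k!$ with $k\geq 2$, the condition $\sum_i\alpha_i=\beta$ with positive $\alpha_i$ excludes any $\alpha_i=\beta$, so these corrections involve only $T_\alpha$ with $\alpha\ne\beta$; after normalization by $\|T_\beta\|$ they are bounded by $C\|X\|^2$. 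Taking $n$ close enough to $e$ lets the leading term dominate, yielding the bound with $C_\mu=\tfrac{1}{2}\min_\beta|\mu(H_\beta)|$.

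The main obstacle is the combinatorial matching in (i): identifying the kernel with $\lnf$ requires the involution $\alpha\mapsto-\wl\cdot\alpha$ on $\Delta^+$ to respect the slow/fast partition induced by $X_{\Lambda_\infty}$, and this is presumably the reason the paper indexes $\lnbs$ via $\wl$ rather than simply via $-\alpha_j$. A secondary delicacy in (ii) is that the quadratic correction depends on $\|X\|$ rather than on $\|T_\beta\|$ alone, so the "close enough to identity" neighborhood must be chosen uniformly small compared to $\min_\beta|\mu(H_\beta)|$; once this is arranged the estimate is routine.
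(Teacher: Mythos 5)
Your plan follows the paper's one-line proof, with a more explicit Taylor expansion for (ii): compute the differential $Z\mapsto H_0([Z,Y])$, use $H_0([Z_\beta,\theta Z_\beta])=-\langle Z_\beta,Z_\beta\rangle H_\beta$, and invoke the implicit function theorem for (i), regularity of $\mu$ for (ii). But the two points you yourself flag as delicate are genuine gaps that your write-up does not close. For (i), the kernel of the linearized map is $\{X\in\fN : X_\alpha=0\text{ for }\alpha\in -\wl\cdot\{\alpha_j:j\leq j_0\}\}$, exactly as you compute, and your assertion that the indexing of $\lnbs$ ``forces'' $-\wl\cdot\{\alpha_j:j\leq j_0\}=\{\alpha_j:j\leq j_0\}$ is not correct: the bijection $\alpha\mapsto-\wl.\alpha$ of $\Delta^+$ does not respect the slow/fast partition defined by $X_{\Lambda_\infty}$. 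Already for $G=SL_n$, $X_{\Lambda_\infty}=\diag(n-1,-1,\dots,-1)$, the root $\alpha_{in}$ with $i>1$ is slow ($\alpha_{in}(X_{\Lambda_\infty})=0$) while $-\wl.\alpha_{in}=\alpha_{1,n+1-i}$ is fast ($\alpha_{1,n+1-i}(X_{\Lambda_\infty})=n$). So with the printed definition $\lnbs=\oplus_{j\leq j_0}\fG_{\wl.\alpha_j}$ the kernel you compute is not $\lnf$. (This tension is latent in the paper's own proof, which takes $Y=\theta(Z_\beta)\in\fG_{-\beta}$ for slow $\beta$ and implicitly treats $\lnbs$ as $\theta(\lns)=\oplus_{j\leq j_0}\fG_{-\alpha_j}$, a different subspace from the printed one. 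Either the definition of $\lnbs$ or the phrase ``tangent to $\lnf$'' needs to be repaired before the lemma can be proved.)

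For (ii), the Taylor expansion and the identification of the linear term are right, but your concluding step is wrong. After normalization by $\|\theta(T_\beta)\|=\|T_\beta\|$ the $k\geq 2$ remainder is $O(\|X\|^2)$ with $X=\log n$, and ``taking $n$ close enough to $e$'' does not give $\|X\|^2\leq c\,\|T_\beta\|$, since $\|T_\beta\|$ may be arbitrarily smaller than $\|X\|$ (the other components $T_\alpha$ can dominate). The ingredient that rescues the estimate — and is also skipped in the paper — is that if $\beta=\sum_i\alpha_i$ with all $\alpha_i\in\Delta^+$ then $\alpha_i(X_{\Lambda_\infty})\leq\beta(X_{\Lambda_\infty})$ for each $i$, so if $\beta$ is slow all $\alpha_i$ are slow; hence the remainder depends only on the slow components $T_\alpha$. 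Combined with the choice made in the Cotlar--Stein argument of $\beta$ maximizing $\|T_\beta\|$ among slow components, the remainder becomes $O(\|T_\beta\|^2)$ and the linear term wins. Without this observation the lower bound in (ii) is not established by your argument.
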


\begin{proof}
The differential of $n\mapsto H_0(\Ad(n)Y)$ is $Z\mapsto H_0([Z, Y])$ ($Z\in\lien$).
Write $Z=Z_{slow}+Z_{fast}$,
$Z_{slow}=\sum Z_\alpha$, with $Z_\alpha\in\fG_\alpha$, and take $Y =\theta(Z_\beta)$ for some $\beta$. We have $H_0([Z, Y])=-\la Z_\beta, Z_\beta\ra H_\beta$ where $H_\beta\in \fA$ is the coroot \cite[Ch.\ VI \S5, Prop.\ 6.52]{Kn}. Note that $\mu(H_\beta)=\la \mu, \beta\ra$, hence $\mu(H_\beta)\not=0$ if $\mu$ is regular.
This proves the lemma.

\end{proof}

%%%%%%%%%%%%%%%%%%%%%%%%%%%%%%%%%%%%%%%%%%%%%%%%%
\subsection{First decomposition of $\cP$}
%%%%%%%%%%%%%%%%%%%%%%%%%%%%%%%%%%%%%%%%%%%%%%%%%%%

We want to use the Cotlar-Stein lemma to estimate the norm of the operator $\cP$, defined in \eqref{e:cP}. To do so, we will decompose $\cP$ into many pieces.
 Our first decomposition of $\cP$ is obtained by covering the boundary $G/P_0$ by a finite number
of small sets $\Omega_1, \ldots, \Omega_M$ described below. We use the fact that there is a neighbourhood $\Omega$ of $eP_0$ in $G/P_0$ that is diffeomorphic to a neighbourhood of $e$ in $\overline{N}$, via the map 
\begin{eqnarray*}
\overline{N}&\To& G/P_0\\
\bar n &\mapsto& \bar nP_0. 
\end{eqnarray*}
Using compactness, we can find an open cover of $G/P_0$ by a finite number
of open sets $\Omega_1, \ldots, \Omega_M$ such that, for every $m$, there exists $g_m\in G$ 
with $\Omega_m\subset g_m\Omega\subset g_m \overline{N} P_0$. Introduce a family of smooth functions $\chi_{\Omega_m}$ on $G/P_0$ such that $\chi_{\Omega_m}$ is supported inside $\Omega_m$ and $\sum_m \chi_{\Omega_m}\equiv 1$. We then define the pseudodifferential operators
$$Q_{m}u(x)=\int \widehat  u(\wl.\lambda,k)Q'_{\omega_0}(x)
\chi_{\Omega_m}(k)  
e_{i\hbar^{-1}\wl.\lambda, k} dk|c_\hbar(\lambda)|^{-2}d\lambda ,$$
and
\begin{equation*}\cP_{m}u= \Pi_\Gamma \cS^* Q^*_{ m}\cU_\chi  u
 \end{equation*}

Obviously, $\cP = \sum_m \cP_{m}$.
The sum over $m$ is finite, and we now fix $ m$. The variable $k$ stays in $g_m \overline{N}P_0$.

\begin{rem} Let $\gamma=\gamma_{\omega_{T-1},\ldots, \omega_0}$ defined as in \eqref{e:defgamma}. Proposition \eqref{p:ansatzforU2} (and Remark \ref{rem:stable}) can be generalized to  
\begin{equation}\label{e:ansatzforS}
Q_{m} \cS \left(Q'_{\omega_{T-1}}\!\!\circ \gamma\,\,\,\, e_{i\hbar^{-1}\wl.\mu, k}\right)
= 
e^{ \frac{-iT\eta H(\mu)}\hbar}e_{i\hbar^{-1}\wl.\mu, k}B_M^{(T)} \!\circ \gamma\,\, ( x, k, \wl.\mu )+
  \cO_{L^2(\bS)}(\hbar^{M})\norm{Q'_{\omega_T}\!\circ \gamma\,\,\,\, e_{i\hbar^{-1}\wl.\mu, k} }_{L^2( \bS)},\end{equation}
  where now
  $$B_M^{(T)} (x, k, \wl.\mu )=\sum_{k=0}^{M-1}\hbar^k b_k^{(T)}(x, k, \wl.\mu),$$
 \begin{multline}\label{e:conditions}b_0^{(T)}(x, k, \wl.\mu)= \chi_{\Omega_m}(k)P^{1/2}_{\omega_0}(x)P_{\omega_{1}}((x, k)e^{-\eta X_{\wl.\mu}})
P_{\omega_{2}}((x, k)e^{-2\eta X_{\wl.\mu}})\ldots Q_{\omega_{T-1}}((x, k)e^{-(T-1)\eta X_{\wl.\mu}}) \\
 = \chi_{\Omega_m}(k)Q^{1/2}_{\omega_0}\circ\gamma^{-1}(x)P_{\omega_{1}}((x, \theta)e^{-\eta X_{\wl.\mu}})
P_{\omega_{2}}((x, k)e^{-2\eta X_{\wl.\mu}})\ldots Q_{\omega_{T-1}}((x, k)e^{-(T-1)\eta X_{\wl.\mu}})
\end{multline}
and the next terms have the same support as the leading one (their derivatives are bounded the same way as in Proposition \ref{p:ansatzforU1}).
 \end{rem}
 
 In the next paragraphs we will concentrate our attention on brackets of the form:
$$\left\la Q'_{\omega_{T-1}}\!\!\circ \!\gamma_2\,\,\,\, e_{i\hbar^{-1}\wl.\mu, k}\, , \,
 \cS^* Q^*_m  \cU_\chi Q'_{\omega_{-T}}\!\!\circ \!\gamma_1\,\, \,\,e_{i\hbar^{-1}\lambda, \theta}\right\ra_{L^2(\bS)},$$
for $\lambda, \mu\in C_\Pi$, $\theta, k\in G/P_0$. We take $\gamma_1=\gamma_{\omega_{-T},\ldots, \omega_0}$ and $\gamma_2=\gamma_{\omega_{T-1},\ldots, \omega_0}$ as defined in \eqref{e:defgamma}. These are none other than the matrix elements of the operator $\cP_m$ in the Fourier basis $e_{i\hbar^{-1}\lambda, \theta}$.

%%%%%%%%%%%%%%%%%%%%%%%%%%%%%%%%%%%%%%%%%%%%%%%%%
\subsection{Second decomposition of $\cP$}
%%%%%%%%%%%%%%%%%%%%%%%%%%%%%%%%%%%%%%%%%%%%%%%%%%%
The index $m$ being fixed, we will apply the Cotlar-Stein lemma to
bound the norm of $\cP_{m}$.  We decompose $\cP_{m}$ as a sum of
countably many operators, and this decomposition is much more involved.

We have assumed that we have a diffeomorphism from a relatively compact
subset of $\overline{N}$ to $\Omega_m$: $\bar n_1\mapsto g_m \bar n_1 P_0$.
We can write the Haar measure on $\Omega_m$ as
$dk=\Jac(\bar n_1) d\bar n_1$, where $\Jac$ is a smooth function on $\overline{N}$
(we suppress from the notation its dependence on $g_m$).
An element $(x, k)\in G/K\times \Omega_m$ corresponding to the point
$g_m \bar n_1 n_1a_1M\in G/M$ can also be represented as
$(g_m \bar n_1 n_1a_1K, g_m \bar n_1)\in G/K \times g_m\overline{N}$.
Accordingly we now write
denote $e_{i\hbar^{-1}\wl.\mu, g_m\bar n_1}$ for $e_{i\hbar^{-1}\wl.\mu, k}.$

Let us look at a scalar product
$\left\la  Q_m \cS Q'_{\omega_{T-1}}\!\!\circ \!\gamma_2\,
  e_{i\hbar^{-1}\wl.\mu\, , g_m \bar n_1 P_0},\,\, \cU_\chi
   Q^{'}_{\omega_{-T}}\circ\gamma_1\,e_{i\hbar^{-1}\lambda, \theta}\right\ra$.
We only need to consider the generic case where
$\theta \in g_m\bar n_1 P_0 \wl P_0$, that is, $\theta$ is of the form
$g_m\bar n_1 n_1 \wl P_0$ (with $n_1\in N$).
In addition, we always assume that $\lambda$ and $\mu$ are regular.
Proposition \ref{p:crit} (ii) tells us that the stationary points of the
phase function
$$gK\mapsto \lambda.H_0(\theta^{-1}gK)-(\wl.\mu).H_0 (k^{-1}gK),\qquad k=g_m\bar n_1 P_0$$
with respect to variations
$$ (g_m\bar n_1 n_1)e^{tX} (g_m\bar n_1 n_1)^{-1}gK   ,\qquad X\in\fN,$$
are the points of the form $gK=g_m\bar n_1 n_1  a_1K$ with $a_1\in A$.
Thus the set of critical points is of codimension $J$.
The stationary phase method then gives:
\begin{multline}\label{e:development}
 \left\la  Q_m \cS Q'_{\omega_{T-1}}\!\!\circ \!\gamma_2\,
           e_{i\hbar^{-1}\wl.\mu, g_m \bar n_1 P_0},\,\, \cU_\chi
             Q^{'}_{\omega_{-T}}\circ\gamma_1\,e_{i\hbar^{-1}\lambda, \theta}
 \right\ra
\\
 = \hbar^{J/2}\int_{a_1\in A} d(\lambda, a_1)
      C_\hbar\left(g_m\bar n_1 n_1 a_1 M, \lambda, \wl.\mu\right) 
      \bar e_{i\hbar^{-1}\wl.\mu,g_m \bar n_1P_0 } (g_m\bar n_1 n_1  a_1K)
      \\
      e_{i\hbar^{-1}\lambda,g_m \bar n_1 n_1 \wl P_0}(g_m\bar n_1 n_1 a_1K)\,
      d a_1 
\end{multline}
where  $ C_\hbar\left(g_m\bar n_1 n_1 a_1 M, \lambda, \wl.\mu\right)
   \sim\sum \hbar^k c_k\left(g_m\bar n_1 n_1 a_1 M, \lambda, \wl.\mu\right) $
and
\begin{multline*}
c_0 \left( g_m\bar n_1 n_1 a_1 M, \lambda, \wl.\mu \right) = \\
\left( A^{(T)}_M\circ\gamma_1(g_m\bar n_1 n_1 a_1 \wl M, \lambda) \right)
\left( \bar B^{(T)}_M\circ\gamma_2( g_m\bar n_1 n_1   a_1 M , \wl.\mu) \right)
.\end{multline*}
(and the next terms have the same support as the leading one). The term $d(\lambda, a_1)$
is the prefactor involving the hessian of the phase function in the application of the method
of stationary phase, it is a smooth function.
So the asymptotics of our scalar product only takes into account the elements $g_m \bar n_1 n_1 a_1 M$ with
$$A^{(T)}_M\circ\gamma_1(g_m\bar n_1 n_1   a_1\wl M , \lambda)\bar B^{(T)}_M\circ\gamma_2( g_m\bar n_1 n_1   a_1 M ,\wl. \mu)
\not = 0.$$

\begin{lem} Assume that the diameter of $\Omega$ and of $\supp Q_{\omega_0}$ is smaller than  $\eps$. Then
there exist $n_{ 0}\in N$ and $a_0$ in $A$ such that
$$B^{(T)}_M\circ\gamma_2 ( g_m\bar n_1 n_1 a_1  M, \wl.\mu)\not =0$$
implies
$n_1a_1=n_0a_0 g $, where $g\in NA$ is $\eps$-close to identity. \end{lem}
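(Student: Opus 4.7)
The plan is to trace through the supports of the cutoff factors appearing in the leading term $b_0^{(T)}$ of $B_M^{(T)}\circ\gamma_2$ (higher-order terms share the same support, as stated in the Ansatz). Of the three types of factors, two directly impose the required localization: the boundary cutoff $\chi_{\Omega_m}(k)$ and the leading spatial cutoff $Q^{1/2}_{\omega_0}\circ\gamma_2^{-1}(x)$. The remaining factors $P_{\omega_j}((x,k)e^{-j\eta X_{\wl.\mu}})$ can be ignored at this stage, since they are already controlled by the previous ones up to the dynamics.

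First I would use the boundary cutoff. Nonvanishing of $\chi_{\Omega_m}(g_m\bar n_1P_0)$ forces $g_m\bar n_1P_0 \in \Omega_m$, a set of diameter $\leq\eps$ in $G/P_0$. Since $\bar n\mapsto g_m\bar n P_0$ is a diffeomorphism from a bounded open subset of $\overline N$ onto $\Omega_m$, this forces $\bar n_1$ to lie in a subset of $\overline N$ of diameter $O(\eps)$. Fix once and for all a reference point $\bar n_0'$ in this subset.

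Next I would use the spatial cutoff. Nonvanishing of $Q^{1/2}_{\omega_0}\circ\gamma_2^{-1}$ at $x=g_m\bar n_1n_1a_1K$ means that this point lies in $\gamma_2\cdot\supp Q_{\omega_0}$, a set of diameter $\leq\eps$ in $\bS$. Fix $p_0\in G$ so that $p_0K$ is the center of this set. Combining with the previous step, and using that $g_m\bar n_1=g_m\bar n_0'\cdot(\bar n_0')^{-1}\bar n_1$ with $(\bar n_0')^{-1}\bar n_1$ at distance $O(\eps)$ from the identity, we deduce that $n_1a_1K$ is at distance $O(\eps)$ from the fixed point $q_0K$, where $q_0:=(g_m\bar n_0')^{-1}p_0\in G$. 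The implicit constant depends only on the fixed bounded elements $g_m$ and $\bar n_0'$.

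Finally I would apply the Iwasawa decomposition $G=NAK$: decompose $q_0=n_0a_0k_0$, so that $q_0K=n_0a_0K$. Then $n_1a_1K$ lies $O(\eps)$-close to $n_0a_0K$ in $\bS$. Define $g:=(n_0a_0)^{-1}n_1a_1$; since $NA$ is a subgroup, $g\in NA$ automatically, and $gK$ is $O(\eps)$-close to $eK$. Because the Iwasawa product map $N\times A\to G/K$, $(n,a)\mapsto naK$, is a diffeomorphism onto its image (with nonvanishing Jacobian at the identity), closeness of $gK$ to $eK$ is equivalent to closeness of $g$ to $e$ inside $NA$. Taking $\eps$ small enough, this yields $n_1a_1=n_0a_0g$ with $g\in NA$ as close to identity as desired, completing the proof.

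The main (mild) obstacle is verifying that the constants hidden in the $O(\eps)$ bounds remain uniform, independent of $\bar n_1$ and of the particular $\gamma_2\in\Gamma$ (which is fixed a priori by the sequence $\bom$); this is a routine continuity/compactness check using that $g_m$ runs over a finite set, $\bar n_0'$ ranges over a bounded subset of $\overline N$, and $\gamma_2\cdot\supp Q_{\omega_0}$ has fixed diameter in $\bS$. No cancellation or oscillatory analysis is required at this step—only support tracking and the regularity of the Iwasawa map.
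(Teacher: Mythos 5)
Your proof is correct and follows essentially the same route as the paper's own (terser) argument: read off the support constraints from the factors of $b_0^{(T)}$, localize $\bar n_1$ via $\chi_{\Omega_m}$ and $x=g_m\bar n_1 n_1a_1 K$ via the spatial cutoff, deduce that $n_1a_1K$ lies in a fixed $O(\eps)$-ball, and transfer this to $n_1a_1\in NA$ using the Iwasawa diffeomorphism. One minor slip: you claim nonvanishing of $B^{(T)}_M\circ\gamma_2$ forces $g_m\bar n_1n_1a_1K\in\gamma_2\cdot\supp Q_{\omega_0}$, but the external composition with $\gamma_2$ in $B^{(T)}_M\circ\gamma_2$ cancels against the internal $Q^{1/2}_{\omega_0}\circ\gamma_2^{-1}$ appearing in $b_0^{(T)}$ (see the second line of \eqref{e:conditions}), so the correct constraint is $g_m\bar n_1n_1a_1K\in\supp Q_{\omega_0}$ itself, as in the paper's proof. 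This does not affect the conclusion: since $\gamma_2$ acts by isometry and is fixed once $\bom$ and $m$ are fixed, either set has diameter $\leq\eps$ and yields a valid choice of $n_0,a_0$ — but it is worth getting the bookkeeping of the $\gamma_2$'s straight.
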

\begin{proof} Just note from the expression of $B^{(T)}_M\circ\gamma_2  $ that, if it is not $0$, we must have
$$g_m\bar n_1n_1 a_1 \in \supp Q_{\omega_0}.$$
The element $g_m$ varies in a finite set and $\bar n_1$ varies over $\Omega$ which is of diameter $\leq \eps$. We also assume that $\supp Q_{\omega_0}$ is of diameter $\leq \eps$, so that $n_1$ (and $a_1$) must both vary in sets of diameter $\leq \eps$.
\end{proof}
It follows that $\bar n_1 n_1 a_1M$ itself is $\eps$-close to
$n_0a_0 M$ in $G/M$.
From now on we write $g_m\bar n_1 n_1 a_1M= g_m n_0 a_0 gM$,
where $gM\in G/M$ varies in a neighbourhood of $eM$ of diameter $\leq \eps$.
We will always choose a representative
$g\in \exp(\fN\oplus \fA\oplus \bar \fN)$. By $G$-equivariance we may
assume $g_m n_0 a_0=1$, which we do from now on.

\begin{prop}\label{p:asymp} (Contracting and expanding foliations)
\begin{enumerate}
\item Let $\mu$ be such that $\alpha_k(X_{\mu})> 0$
for all $\alpha_k\in \Delta^+$ with $k>j_0$ (this is of course the case if $\mu$ is close enough to $\Lambda_{\infty}$).
Suppose we have $gM$ and $g'M$ both $\eps$-close to $eM$ such that
$ B_M^{(T)} \!\!\circ \!\gamma_2\,\,\,\, ( g M, \wl.\mu) \neq 0$
and
$ B_M^{(T)} \!\!\circ \!\gamma_2\,\,\,\, ( g'M, \wl.\mu) \neq 0$,
then $g^{'-1}g=\exp(X+\sum_{\alpha\in \Delta^+}Y_\alpha+\sum_{\alpha\in \Delta^+}Y_{\wl.\alpha})$ with $X\in \fA, Y_\alpha\in\fG_\alpha$, $\norm{X},
\norm{Y_\alpha}\leq \eps$, and $\norm{Y_{\wl.\alpha_k}}\leq \eps e^{-T\eta(\wl.\alpha_k)(X_{\wl.\mu})}=
\eps e^{-T\eta \alpha_k(X_{\mu})}$ for $k>j_0$.

\item Similarly, assume that $\alpha_k(X_\lambda)>0$ for all $\alpha_k\in\Delta^+$ with $k>j_0$.
 Suppose we have $gM$ and $g'M$ both $\eps$-close to $eM$ such that
    $A^{(T)}_M\circ\gamma_1( g \wl M, \lambda) \neq 0$
and $A^{(T)}_M\circ\gamma_1( g'\wl M, \lambda) \neq 0$.
Then $g^{'-1}g=\exp(X+\sum_{\alpha}Y_\alpha)$ with
$X\in \fA, Y_\alpha\in\fG_\alpha$, $\norm{X}, \norm{Y_\alpha}\leq \eps$,
$\norm{Y_{\alpha_k}}\leq \eps e^{-T\eta\alpha_k(X_{\lambda})}$ for $k>j_0$.
\end{enumerate}
\end{prop}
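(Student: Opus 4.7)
The plan is to translate the non-vanishing condition on the leading-order amplitude into a pointwise $G/M$-closeness of two orbits, and then use the root-space decomposition of $\fG$ to read off a componentwise bound on the logarithm of $g'^{-1}g$ via the adjoint action of $A$. I describe the argument for part~(1); part~(2) is strictly parallel, reducing to the same scheme via the identity $\wl e^{jX_\lambda}\wl^{-1}=e^{jX_{\wl.\lambda}}$ and the fact that $\wl\in K$, which converts the orbit of $g\wl M$ under $e^{j\eta X_\lambda}$ into the orbit of $g$ under $e^{j\eta X_{\wl.\lambda}}$ in the first coordinate.

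By~\eqref{e:conditions} the amplitude $b_0^{(T)}\circ\gamma_2(gM,\wl.\mu)$ is a product of the cutoff $\chi_{\Omega_m}(k)$ (supported on a set of diameter $\eps$ in $G/P_0$) with $\Gamma$-invariant cutoffs evaluated at $(gM)e^{-j\eta X_{\wl.\mu}}$ for $j=0,\ldots,T-1$, each of the latter having support of diameter $\leq\eps$ in $G/K$. Because right translation by $A$ preserves the $G/P_0$-component, the non-vanishing at both $gM$ and $g'M$ controls both coordinates of the orbit, so
\[
 d_{G/M}\!\left((gM)e^{-j\eta X_{\wl.\mu}},\;(g'M)e^{-j\eta X_{\wl.\mu}}\right)\;\leq\;C\eps \qquad(j=0,\ldots,T-1).
\]

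Writing $g'^{-1}g=\exp(Z)$ and using $\fG=\fM\oplus\fA\oplus\bigoplus_\alpha\fG_\alpha$, I absorb the $\fM$-component of $Z$ into a change of representative of $g$ modulo $M$ (valid to leading order by Baker--Campbell--Hausdorff since $Z$ is small); thus I may assume $Z=X+\sum_\alpha Y_\alpha$ with $X\in\fA$ and $Y_\alpha\in\fG_\alpha$. Then
\[
 (g'e^{-j\eta X_{\wl.\mu}})^{-1}(ge^{-j\eta X_{\wl.\mu}})=\exp\!\Bigl(\Ad(e^{j\eta X_{\wl.\mu}})Z\Bigr)=\exp\!\Bigl(X+\sum_\alpha e^{j\eta\,\alpha(X_{\wl.\mu})}Y_\alpha\Bigr),
\]
and the norm-equivalence on $G/M$ near $eM$ between the Riemannian metric and the Killing norm on $\fA\oplus\bigoplus_\alpha\fG_\alpha$ upgrades the previous distance bound to
\[
 \norm{X}+\sum_\alpha e^{j\eta\,\alpha(X_{\wl.\mu})}\norm{Y_\alpha}\;\leq\;C\eps \qquad(j=0,\ldots,T-1).
\]

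Reading this off root by root: for positive roots $\alpha=\alpha_k$ one has $\alpha_k(X_{\wl.\mu})\leq 0$ since $\wl.\mu\in-\overline{ C_\Pi}$, and the binding constraint at $j=0$ gives only $\norm{Y_{\alpha_k}}\leq\eps$; for the negative roots $\alpha=\wl.\alpha_k$ the identity $(\wl.\alpha_k)(X_{\wl.\mu})=\alpha_k(\Ad(\wl)\,X_{\wl.\mu})=\alpha_k(X_\mu)$ is positive for $\mu$ close to $\Lambda_\infty$, so the binding constraint at $j=T-1$ yields $\norm{Y_{\wl.\alpha_k}}\leq\eps e^{-T\eta\,\alpha_k(X_\mu)}$ after absorbing a one-step factor into the constant. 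For $k>j_0$ we have $\alpha_k(X_\mu)\geq\tfrac1{2\cK}-\eps>0$ by the definition of $j_0$ and the regularity of $\Lambda_\infty$, so the decay is genuine. The only non-routine step in the whole argument is the Lie-algebraic transfer in the third paragraph: one must justify the absorption of the $\fM$-component of $Z$ and verify that the restriction of the $G/M$-metric to a neighbourhood of $eM$ is comparable, component by component, to the Killing norm on $\fA\oplus\bigoplus_\alpha\fG_\alpha$, but both facts follow from standard small-displacement estimates once one recalls that $\fG_\alpha\cap\fM=0$ and that the Cartan involution $\theta$ exchanges $\fG_\alpha$ with $\fG_{-\alpha}$.
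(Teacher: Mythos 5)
Your proof takes essentially the same route as the paper's: read off the support constraints on the orbit $(gM)e^{-j\eta X_{\wl.\mu}}$ (respectively $(g\wl M)e^{j\eta X_\lambda}$), then conjugate $g'^{-1}g$ by the relevant $A$-element and use the $\Ad$-eigenvalues on each root space to extract the exponential decay. The paper works only with the two endpoint conditions $j=0$ and $j=T-1$ and phrases the second step as ``writing the action of $A$ in the co-ordinate system,'' whereas you state the constraint for all $j$ and then observe that the endpoints are binding; this is cosmetic. Your explicit verification of $(\wl.\alpha_k)(X_{\wl.\mu})=\alpha_k(X_\mu)$ is exactly what justifies the stated exponent in part (1), and the use of $\Ad(\wl)$ to pass the $\fM$-ambiguity into a representative in $\exp(\fA\oplus\fN\oplus\lienb)$ is the same coordinate system the paper invokes.

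One point to verify carefully in your ``strictly parallel'' reduction of part (2): the first coordinate of $(g\wl M)e^{j\eta X_\lambda}$ is $ge^{j\eta X_{\wl.\lambda}}K$, so the rate that emerges from the conjugation for $Y_{\alpha_k}$ ($\alpha_k\in\Delta^+$) is $\alpha_k(X_{\wl.\lambda})=(\wl.\alpha_k)(X_\lambda)=-(-\wl.\alpha_k)(X_\lambda)$, which is only equal to $-\alpha_k(X_\lambda)$ when $\wl$ acts as $-1$ on $\fA^*$. For higher-rank groups such as $\SL_n$, $n\geq 3$, the positive root $-\wl.\alpha_k$ need not equal $\alpha_k$, so one should check that the statement's exponent $\alpha_k(X_\lambda)$ is what the argument produces (the paper's own proof of part (2) is the single phrase ``the proof of the second part is similar,'' so it does not settle this). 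Since $-\wl$ is a multiplicity-preserving permutation of $\Delta^+$, the total volume estimate entering the Cotlar--Stein sums is unchanged after a relabelling, but if the fast/slow split $\{k>j_0\}$ is not $-\wl$-stable the root-by-root statement as written is not what your (or the paper's) conjugation argument directly yields, and one should track this relabelling explicitly.
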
 
 Actually, the claim holds for all $k$ (not only for $k>j_0$), but we will only use it for $k>j_0$.
 For the other indices, there is something more optimal to do.
\begin{proof}
Assume that the term $B_M^{(T)} \!\!\circ \!\gamma_2\,(gM, \wl.\mu)$
does not vanish.  The evolution equation \eqref{e:conditions} shows
that we must have\footnote{Here the $Q_{\omega}$ are treated as functions on $G/M$ that factor through $G/K$.}
\begin{itemize}
\item $ge^{-(T-1) \eta X_{\wl.\mu} }M \in \gamma_2^{-1}.\supp Q_{\omega_{T-1}}$;
\item $g M \in \supp Q_{\omega_0}$.
\end{itemize}

If $gM$ and $g'M$ both satisfy the two conditions above, then we see that
$g'^{-1}g$ must be $\eps$-close to identity.  For $\eps$ small enough
we can write this element using the co-ordinates described in part (1) of the claim.
Also, $e^{(T-1) \eta X_{\wl.\mu} } g'^{-1}ge^{-(T-1) \eta X_{\wl.\mu} }$
must stay in the fixed compact set
$$M [\supp Q_{\omega_{T-1}}]^{-1} \supp Q_{\omega_{T-1}} M \subset G.$$
Writing the action of $A$ in the co-ordinate system gives the claim.
The proof of the second part is similar.
\end{proof}

Finally we write $gM=\bar{n} n a M$ with $\bar n\in\overline{N},n\in N, a\in A$ all $\eps$-close to $1$.
We decompose $n=e^Y n_\textrm{fast}$, and $\bar n=e^{\bar{Y}} \bar{n}_\textrm{fast}$,
$Y\in \lns\simeq \IR^{J_0}$, $\overline{ Y}\in \lnbs\simeq \IR^{J_0}$ both $\eps$-close to $0$
 (we fix a vector space isomorphism that sends the root spaces
to the coordinate axes of $\IR^{J_0}$); and $n_\textrm{fast}\in \Nf$, $\bar{n}_\textrm{fast}\in \Nbf$ both $\eps$-close to $1$.  The quantity $\epsilon$ is fixed, but can be chosen as small as we wish. Note that the previous Proposition restricts $n_\textrm{fast}$ and $\bar n_\textrm{fast}$ to sets of measure $\prod_{k>j_0} \eps e^{-T\eta m_{\alpha_k}\alpha_k(X_{\lambda})}$ and
 $\prod_{k>j_0} \eps e^{-T\eta  m_{\alpha_k}\alpha_k(X_{\mu})}$, respectively.

We will now break $\cP_m$ into countably many pieces,
$$\cP_{m}=\sum_{(\bar y, y, t,\lambda_0)\in\IZ^{2J_0}\times \IZ^{2r}} \cP_{m, (\bar y, y,t, \lambda_0)}$$
to which we will apply the Cotlar-Stein lemma.

For $j=J_0$ and $j=r$ choose a smooth nonnegative compactly supported function $\chi^j$ on $\IR^{ j}$ such that \begin{equation}\sum_{y\in \IZ^j }\chi^j(Y-y)\equiv 1
\label{e:decompcs}
\end{equation}
and such that $\chi^j(Y).\chi^j(Y+2y)=0$ for all $Y\in \IR^j$
and $y\in \IZ^j\setminus \{0\}.$

Let $(\bar y, y)\in \IZ^{2J_0}$ and let $(t,\lambda_0)\in \IZ^{2r}$. Denote $2^+$ a fixed real number $>2$.
Define $\chi^\hbar_{(\bar y, y )}(\overline{ Y}, Y)=\chi^{J_0}(\hbar^{-1/2^+}\overline{ Y}-\bar y)\chi^{J_0}(\hbar^{-1/2^+} Y-y)$; 
and $\chi^\hbar_{\lambda_0}(\lambda)=\chi^r(\hbar^{-1/2^+}\lambda-\lambda_0)$ and
$\chi^\hbar_{t}(a)=\chi^r(\hbar^{-1/2^+ }a-t)$.
Also define $\chi^\hbar_{(\bar y, y, t)}(gM)=\chi^\hbar_{(\bar y, y, t )}(\overline{ Y}, Y) \chi^\hbar_{t}(a)$ if $gM$ is an element
of $G/M$ that can be decomposed as $gM= e^{\overline{ Y}}\bar n_\textrm{fast}  e^Y  n_\textrm{fast} a M$, as described above.

We define a bounded operator $\cS_{m, (\bar y, y, t, \lambda_0)}:L^2(G/K)\To L^2(G/K)$ by
$$ \cS_{m, (\bar y, y,t, \lambda_0)}\left[  e_{i\hbar^{-1}\wl.\mu, k}\right] (x)
\defeq 
e^{ \frac{-iT\eta H(\mu)}\hbar}e_{i\hbar^{-1}\wl.\mu, k}(x)\,\, \chi^\hbar_{(\bar y, y, t)}(x, k)\,
\chi^\hbar_{\lambda_0}(\mu)
\,B_M^{(T)} \!\!\circ \!\gamma_2\,\, ( x, k, \wl.\mu ).$$
%Applying the (non)stationary phase lemma, one shows `easily' that
%\begin{equation}\label{e:Q'}\cS_{m, (\bar y, y, t, \lambda_0)}\left[  e_{i\hbar^{-1}\wl.\mu, k}\right]= \cS_{m, (\bar y, y, t, \lambda_0)} Q'_{\omega_{T-1}}\circ\gamma_2 \left[  e_{i\hbar^{-1}\wl.\mu, k}\right]+\cO(\hbar^\infty) 
%\end{equation}
%(this is the only place where we need $2^+>2$, otherwise we could as well take $2^+=2\spadesuit$.)
We then define
$$\cP_{m, (\bar y, y, t, \lambda_0)}\defeq \Pi_\Gamma Q'_{\omega_{T-1}}\circ \gamma_2\,\, \cS^*_{m, (\bar y, y, t, \lambda_0)}\cU_\chi Q^{'2}_{\omega_{-T}}\circ\gamma_1.$$
It can be checked that
$$\norm{\cP_{m}-\sum_{(\bar y, y, t, \lambda_0)\in\IZ^{2k_o+2r}} \cP_{m, (\bar y, y, t, \lambda_0)}}_{L^2(\bY)\To L^2(\bY)}=\cO(\hbar^{M-d/2}),$$
by noting that the sum $ \sum_{(\bar y, y, t, \lambda_0)\in\IZ^{2k_o+2r}} \cS_{m, (\bar y, y, t, \lambda_0)}$ gives back our Ansatz \eqref{e:ansatzforS} for $Q_m \cS$, and by arguing as in \eqref{e:oneofthethings} that the difference between $Q_m \cS$ and the Ansatz is of order $\cO(\hbar^{M-d/2})$. Again we choose $M$ large enough so that the error $\cO(\hbar^{M-d/2})$ is negligible compared to the bound announced in Theorem \ref{t:main}.

 \vspace{.8cm}
 Let us now look at a scalar product $\left\la \cS_{m, (\bar y, y, t, \lambda_0)}   e_{i\hbar^{-1}\wl.\mu, \bar n P_0},\,\,
 \cU_\chi   Q^{'}_{\omega_{-T}}\circ\gamma_1\,e_{i\hbar^{-1}\lambda, \theta}\right\ra$.
We need only consider the generic case where $\theta \in \bar n P_0 \wl P_0$, that is, $\theta$ is of the form $\theta=\bar n n\wl P_0$ (with $n\in N$). From the previous discussions, it follows that this scalar product is non-negligible only if $\bar n $ and $n$ stay in some sets of diameters $\leq \eps$; and, without loss of generality, we have assumed they are both $\eps$-close to $1$. 
As in \eqref{e:development}, we have by the stationary phase method
\begin{multline}\label{e:astep}
\left\la \cS_{m, (\bar y, y, t, \lambda_0)}   e_{i\hbar^{-1}\wl.\mu, \bar n P_0},\,\,
 \cU_\chi    Q^{'}_{\omega_{-T}}\circ\gamma_1\,e_{i\hbar^{-1}\lambda, \theta}\right\ra
 \\
=\hbar^{J/2}\int_{a\in A } d(\lambda, a) \,C^{(\bar y, y, t, \lambda_0)}_\hbar\left(\bar n n a  M , \lambda, \wl.\mu\right) 
\bar e_{i\hbar^{-1}\wl.\mu, \bar n P_0} (\bar n n  aK)
e_{i\hbar^{-1}\lambda, \bar n n wP_0}(\bar n n   aK)
da\\
=\hbar^{J/2}\int_{a\in A } d(\lambda, a) \,C^{(\bar y, y, t, \lambda_0)}_\hbar\left(\bar n n   aM , \lambda, \wl.\mu\right) 
\bar e_{i\hbar^{-1}\wl.\mu, \bar nP_0 } (\bar n n   aK)
e_{i\hbar^{-1}\lambda, \bar n n \wl P_0}(\bar n n  aK)
 da.
\end{multline}
where  $C^{(\bar y, y, t, \lambda_0)}_\hbar\left(\bar n n   aM, \lambda, \wl.\mu \right) =\sum \hbar^k c_k\left(\bar n n   aM, \lambda, \wl.\mu \right) $
and \begin{equation*}c_0\left(\bar n n   aK,
 \bar n P_0,  \bar nn \wl P_0, \lambda, \wl.\mu \right) =
A^{(T)}_M\circ\gamma_1(\bar n n a \wl M, \lambda)\bar B^{(T)}_M\circ\gamma_2(\bar n n a M,\wl. \mu) \times
\chi^\hbar_{(\bar y, y, t )}(\bar n n aM)\chi^\hbar_{\lambda_0}(\mu)
\end{equation*}
(the next terms have the same support as the leading one). Remember the notation $\bar n=e^{\overline{ Y}}\bar n_{fast}$, $n=e^Y n_{fast}$. By Proposition \ref{p:asymp}, and by definition of the cut-off functions $\chi^{J_0}, \chi^r$, our scalar product is non-negligible only if $\overline{ Y}, Y$ stay in a set of measure $\hbar^{J_0/2^+}$, and $n_{fast}, \bar n_{fast}$ stay in a set of measure $\prod_{k>j_0}  e^{-T\eta m_{\alpha_k}\alpha_k(X_{\lambda})}$ and
 $\prod_{k>j_0}  e^{-T\eta  m_{\alpha_k}\alpha_k(X_{\mu})}$, respectively.

 %%%%%%%%%%%%%%%%%%%%%%%%%%%%%%%%%%%%%%%%%%%%%%%%%%%
\subsection{Norm of $\cP^*_{m, (\bar x, x, s, \mu_0) }\cP_{m, (\bar y, y, t, \lambda_0)}$}
%%%%%%%%%%%%%%%%%%%%%%%%%%%%%%%%%%%%%%%%%%%%%%%%%%
We are now ready to check the first assumption of the Cotlar-Stein lemma, that is, to
bound from above the norm of $\cP^*_{m, (\bar x, x, s, \mu_0) }\cP_{m, (\bar y, y, t, \lambda_0)}$.

Let $u, v\in L^2(\Gamma\backslash G/K)$. We write
\begin{multline*}
\left\la \cP_{m, (\bar x, x, s, \mu_0) } v, \,\cP_{m, (\bar y, y, t, \lambda_0)}u\right\ra_{\Gamma\backslash G/K} \\ =\left\la  Q'_{\omega_{T-1}}\circ\gamma_2\cS^*_{m, (\bar x, x, s, \mu_0) } \cU_\chi Q^{'2}_{\omega_{-T}}\circ\gamma_1\, v, \,Q'_{\omega_{T-1}}\circ\gamma_2
 \cS^*_{m,  (\bar y, y, t, \lambda_0)}  \cU_\chi  Q^{'2}_{\omega_{-T}}\circ\gamma_1\,u\right\ra_{G/K} 
  \\
  =\left\la   \cS^*_{m, (\bar x, x, s, \mu_0) } \cU_\chi Q^{'2}_{\omega_{-T}}\circ\gamma_1\, v,  \, \cS^*_{m, (\bar y, y, t, \lambda_0)}  \cU_\chi  Q^{'2}_{\omega_{-T}}\circ\gamma_1\,u\right\ra_{G/K} 
 + \cO(\hbar^{\infty})\norm{u}
 \norm{v}.
 \end{multline*}
   
 We develop fully this scalar product using the Fourier transform.
 \begin{multline} 
\left\la \cS^*_{m,  (\bar x, x, s, \mu_0) } \cU_\chi Q^{'2}_{\omega_{-T}}\circ\gamma_1 \,v,   \cS^*_{m,  (\bar y, y, t, \lambda_0)}  \cU_\chi  Q^{'2}_{\omega_{-T}}\circ\gamma_1\,u\right\ra_{G/K}  
  \\
  =\int  d\theta d\theta'
  |c_\hbar(\lambda)|^{-2}d\lambda |c_\hbar(\lambda')|^{-2}d\lambda' \widehat{Q^{'}_{\omega_{-T}}\circ\gamma_1\, u}(\lambda, \theta)\overline{\widehat{Q^{'}_{\omega_{-T}}\circ\gamma_1\, v}(\lambda', \theta')}
 \\ 
\left\la \cS^*_{m,  (\bar x, x, s, \mu_0) }  \cU_\chi Q^{'}_{\omega_{-T}}\circ\gamma_1 e_{i\hbar^{-1}\lambda', \theta'},
  \cS^*_{m,  (\bar y, y, t, \lambda_0)} \cU_\chi Q^{'}_{\omega_{-T}}\circ\gamma_1e_{i\hbar^{-1}\lambda, \theta}\right\ra_{G/K}
\\
  =\int d\theta d\theta' dk
  |c_\hbar(\lambda)|^{-2}d\lambda |c_\hbar(\lambda')|^{-2}d\lambda' |c_\hbar(\mu)|^{-2}d\mu
   \widehat{Q^{'}_{\omega_{-T}}\circ\gamma_1\, u}(\lambda, \theta)\overline{\widehat{Q^{'}_{\omega_{-T}}\circ\gamma_1\, v}(\lambda', \theta')}
 \\  \left\la \cU_\chi Q^{'}_{\omega_{-T}}\circ\gamma_1 e_{i\hbar^{-1}\lambda', \theta'}, \cS_{m, (\bar x, x, s, \mu_0) } 
 e_{i\hbar^{-1}\wl.\mu, k}\right\ra \left\la  \cS_{m, (\bar y, y, t, \lambda_0)} e_{i\hbar^{-1}\wl.\mu, k}
  \cU_\chi Q^{'}_{\omega_{-T}}\circ\gamma_1e_{i\hbar^{-1}\lambda, \theta}\right\ra_{G/K}
   \\
  =
  \int d\theta d\theta' \Jac(\bar n) d\bar n
  |c_\hbar(\lambda)|^{-2}d\lambda |c_\hbar(\lambda')|^{-2}d\lambda' |c_\hbar(\mu)|^{-2}d\mu
   \widehat{Q^{'}_{\omega_{-T}}\circ\gamma_1\, u}(\lambda, \theta)\overline{\widehat{Q^{'}_{\omega_{-T}}\circ\gamma_1\, v}(\lambda', \theta')}
 \\  \left\la \cU_\chi Q^{'}_{\omega_{-T}}\circ\gamma_1 e_{i\hbar^{-1}\lambda', \theta'}, \cS_{m, (\bar x, x, s, \mu_0) } 
 e_{i\hbar^{-1}\wl.\mu, \bar nP_0}\right\ra \left\la  \cS_{m, (\bar y, y, t, \lambda_0)} e_{i\hbar^{-1}\wl.\mu, \bar nP_0},\,
  \cU_\chi Q^{'}_{\omega_{-T}}\circ\gamma_1e_{i\hbar^{-1}\lambda, \theta}\right\ra_{G/K}
 \label{e:last}
  \end{multline}

Finally, in equation \eqref{e:last}, we write $\theta=\bar n n\wl P_0$ and $\theta'=\bar n n'\wl P_0$
(we can do so on a set of full measure). We have shown in \eqref{e:astep} that
\begin{multline}  
 \left\la \cU_\chi Q^{'}_{\omega_{-T}}\circ\gamma_1 e_{i\hbar^{-1}\lambda', \theta'}, \cS_{m, (\bar x, x, s, \mu_0) } 
 e_{i\hbar^{-1}\wl.\mu, \bar nP_0}\right\ra \left\la  \cS_{m, (\bar y, y, t, \lambda_0)} e_{i\hbar^{-1}\wl.\mu, \bar nP_0},\,
  \cU_\chi Q^{'}_{\omega_{-T}}\circ\gamma_1e_{i\hbar^{-1}\lambda, \theta}\right\ra_{G/K}
  \\=
  \hbar^{J}\int_{a\in A } d(\lambda,a)\, C^{(\bar y, y, t, \lambda_0)}_\hbar\left(\bar n n   a M , \lambda, \wl.\mu\right) 
\bar e_{i\hbar^{-1}\wl.\mu, \bar n } (\bar n n   aK)
e_{i\hbar^{-1}\lambda, \bar n n w}(\bar n n   aK)
 da\\
\int_{a'\in A } d(\lambda', a')\,\bar C^{(\bar x, x, s, \mu_0) }_\hbar\left(\bar n n'    a'M, \lambda', \wl.\mu\right) 
 e_{i\hbar^{-1}\wl.\mu, \bar n } (\bar n n'   a'K)
\bar e_{i\hbar^{-1}\lambda', \bar n n' w}(\bar n n'    a'K) da'\label{e:almostend}.
 \end{multline}
  Already we can note that $C^{(\bar y, y, t, \lambda_0)}_\hbar\left(\bar n n   aM, \lambda, \wl.\mu \right)  \bar C^{(\bar x, x, s, \mu_0) }_\hbar\left(\bar n n'  a'M, \lambda', \wl.\mu\right)$
 can only be non zero if $\chi^\hbar_{(\bar y, y, t )}(\bar nn   a M) \chi^\hbar_{(\bar x, x, s ) }(\bar nn'   a' M) \not=0$, and from the way we chose $\chi^{J_0}$ this can happen only for $\norm{ \bar x-\bar y}\leq 2$.
 For the same reason, it can only be non zero if $\norm{\mu_0-\lambda_0}\leq 2$. 
  
  Now we try to show that \eqref{e:last} decays fast when $\norm{x-y}$
  gets large.
Under the last integral in \eqref{e:last} we have a function of the pair $(\bar n n a, \bar n n' a')$. 
We have an oscillatory integral of the form \eqref{e:NSP},
with a phase
\begin{eqnarray*}S(\bar n n a, \bar n n' a')&=&\lambda. B(\bar n n a \wl M)+(\wl.\mu)[B(\bar n n' a')-B(\bar n n a)]-\lambda'.
B(\bar n n' a' \wl M)\\
&=&\lambda. B(\bar n n a \wl M)+(\wl.\mu)[ a'-a]-\lambda'.
B(\bar n n' a' \wl M),
\end{eqnarray*}
where $B$ is the function defined in \eqref{e:buse}.
We want to do ``integration by parts with respect to $\bar n$'' (as in Lemma \ref{l:NSP}). However, because the derivatives
of $S$ with respect to $\bar n$ are tricky to compute, it is preferable to use a vector field $Z$ whose definition is a bit delicate but with the property that $Z. B(\bar n n a \wl M)=0$ and $Z.B(\bar n n' a' \wl M)=0$.

Consider a variation of the form
$$\Psi^\tau:(\bar n na, \bar n n'a')\mapsto (\bar n n  e^{\tau Y}a, \bar n n'  a'a^{-1} e^{\Psi(\tau Y)}  a)=
\bar n n (e^{\tau Y}a,   n^{-1}n' a'a^{-1} e^{\Psi(\tau Y)}  a),$$
for $Y\in\bar\fN$, and $\Psi=\Psi_{ n^{-1}n' a'a^{-1}}$ defined in lemma \ref{l:n-}.
By definition of $\Psi$, the two elements $\bar n ne^{\tau Y}a$ and $\bar n n'a'a^{-1} e^{\Psi(\tau Y)}  a$
are in the same $NA$ orbit, for all $\tau$. Such a variation preserves the terms
$B(\bar n n' a' \wl M)$ and $B(\bar n n a \wl M)$.
We call $Z$ the vector field $\frac{d\Psi^\tau}{d\tau}_{|\tau=0}$. We take $Y\in \fG_{\wl.\alpha_k}$ with $1\leq k\leq j_0$.
We note that each term of the product
\begin{multline*}  \widehat{Q^{'}_{\omega_{-T}}\circ\gamma_1\, u}(\lambda, \bar n n \wl P_0)\overline{\widehat{Q^{'}_{\omega_{-T}}\circ\gamma_1\, v}(\lambda', \bar n n' \wl P_0)}
e_{i\hbar^{-1}\lambda, \bar n n \wl}(\bar n n   aK) \bar e_{i\hbar^{-1}\lambda', \bar n n' \wl}(\bar n n'    a'K)
 \end{multline*} is invariant under $\Psi^\tau$.
The function $C^{(\bar y, y, t, \lambda_0)}_\hbar\left(\bar n n   aM \right)  \bar C^{(\bar x, x, s, \mu_0) }_\hbar\left(\bar n n'   aM \right) $ satisfies
$$\norm{Z^mC^{(\bar y, y, t, \lambda_0)}_\hbar\left(\bar n n   aM \right)  \bar C^{(\bar x, x, s, \mu_0) }_\hbar\left(\bar n n'    a'M\right)}\leq C(m)\hbar^{-m/2^+}$$
just by the definition of $C^{(\bar y, y, t, \lambda_0)}_\hbar$ and $C^{(\bar x, x, s, \mu_0) }_\hbar$. Now we want to apply the nonstationary phase lemma \ref{l:NSP}, so we need to understand $ZS= Z\left[(\wl.\mu)[B(\bar n n' a')-B(\bar n n a)]\right].$ 
 
 Lemmas \ref{l:n-} and \ref{l:n?} tell us that if we write $n^{-1}n' = \exp(T)$
with $T = \sum_\alpha T_\alpha$ $\epsilon$-close to $0$, choose $\beta$ among the slow exponents
so that the norm of $T_\beta$ is comparable to the norm of $\log(n^{-1}n')_\textrm{slow}$ and take $Y\in\lnbs$ of norm $1$ such that
$\Psi_0'(Y) = \theta(T_\beta)$ then
\begin{equation}|ZS(\bar n na, \bar n n'a')|\geq C\norm{\log(n^{-1}n')_{\mathrm{slow}}}.\end{equation}
 Note that we have $\norm{\log(n^{-1}n')_{\mathrm{slow}}}\geq\hbar^{1/2^{+}}(\norm{x-y}-4)$
if $C^{(\bar y, y, t, \lambda_0)}_\hbar\left(\bar n n   aM \right)  \bar C^{(\bar x, x, s, \mu_0) }_\hbar\left(\bar n n'   aM \right) \not= 0$.
 
 We now apply Lemma \ref{l:NSP} to the last expression of integral \eqref{e:last}, integrating by parts $\tilde M$-times using the vector field $Z$.
 This
 yields
  that $\left\la \cP_{m, (\bar x, x, s, \mu_0) } v, \cP_{m, (\bar y, y, t, \lambda_0)}u\right\ra_{\bY}$ is bounded from above by
 \begin{multline*}\frac{C(M)\hbar^{\tilde M(1-2/2^+)}}{\max(16,\norm{x-y})^{\tilde M} }
  \hbar^{J}\\
  \int \Jac(n)dn \Jac(n')dn' \Jac(\bar n)d\bar n da\, da' \chi^\hbar_{(\bar y, y, t )}(\bar nn   a M) \chi^\hbar_{(\bar x, x, s ) }(\bar nn'   a' M) 
  |c_\hbar(\lambda)|^{-2}d\lambda |c_\hbar(\lambda')|^{-2}d\lambda' |c_\hbar(\mu)|^{-2}d\mu\\
\chi_{\lambda_0}^\hbar(\mu)\chi_{\mu_0}^\hbar(\mu) | \widehat{Q^{'}_{\omega_{-T}}\circ\gamma_1\, u}(\lambda, \bar n n\wl P_0)\widehat{Q^{'}_{\omega_{-T}}\circ\gamma_1\, v}(\lambda', \bar n n' \wl P_0)|
 \end{multline*}
 for an arbitrarily large integer $\tilde M$. For any $\bar n, n, n'$, we have
$$\int da\, da' \chi^\hbar_{(\bar y, y, t )}(\bar nn   a M) \chi^\hbar_{(\bar x, x, s ) }(\bar nn'   a' M) =\cO(\hbar^{2r/2^+}),$$
so the previous bound becomes
 \begin{multline*}\frac{\hbar^{\tilde M(1-2/2^+)}}{\max(16,\norm{x-y})^{\tilde M} }
  \hbar^{J} \hbar^{2r/2^+}\\
  \int \Jac(n)dn \Jac(n')dn' \Jac(\bar n)d\bar n  
  |c_\hbar(\lambda)|^{-2}d\lambda |c_\hbar(\lambda')|^{-2}d\lambda' |c_\hbar(\mu)|^{-2}d\mu\\
\chi_{\lambda_0}^\hbar(\mu)\chi_{\mu_0}^\hbar(\mu) | \widehat{Q^{'}_{\omega_{-T}}\circ\gamma_1\, u}(\lambda, \bar n n\wl P_0)\widehat{Q^{'}_{\omega_{-T}}\circ\gamma_1\, v}(\lambda', \bar n n' \wl P_0)|
 \end{multline*}

 Similarly, $\tilde M$ integrations by parts in \eqref{e:last} with respect to the variable $\mu$ allows to gain a factor 
 $\frac{\hbar^{\tilde M(1-1/2^+)}}{\norm{a-a'}^{\tilde M}}\leq \frac{\hbar^{\tilde M(1-2/2^+)}}{\norm{t-s}^{\tilde M} }$ if $\norm{t-s}$ is large enough.
 Integrations by parts with respect to $a$ allow to gain a factor
 $\frac{\hbar^{\tilde M(1-1/2^+)}}{\norm{\lambda-\mu}^{\tilde M}}$; and integrations by parts with respect to $a'$ allow to gain a factor
 $\frac{\hbar^{\tilde M(1-1/2^+)}}{\norm{\lambda'-\mu}^{\tilde M}}$. In particular, the contribution to \eqref{e:last} 
 of those $\lambda,\lambda', \mu$ with $\norm{\lambda'-\mu}\geq \hbar^{1/2}$ or $\norm{\lambda-\mu}\geq h^{1/2}$ is $\cO(\hbar^\infty)$. In these cases the application of the non-stationary phase lemma \ref{l:NSP}
 is made simpler by the fact that the phase $S$ is linear in $\mu$,  $a$ and $a'$.

 We find that $\left\la \cP_{m, (\bar x, x, s, \mu_0) } v, \cP_{m, (\bar y, y, t, \lambda_0)}u\right\ra_{\bY}$ is bounded from above by
 \begin{multline}\frac{1}{ \max(16,\norm{x-y})^{\tilde M}}\frac{1}{\max(16,\norm{t-s})^{\tilde M }}
  \hbar^{J}\hbar^{2r/2^{+}} \\
  \int \Jac(n)dn \Jac(n')dn' \Jac(\bar n)d\bar n
  |c_\hbar(\lambda)|^{-2}d\lambda |c_\hbar(\lambda')|^{-2}d\lambda' |c_\hbar(\mu)|^{-2}d\mu\\
\chi_{\lambda_0}^\hbar(\mu)\chi_{\mu_0}^\hbar(\mu) | \widehat{Q^{'}_{\omega_{-T}}\circ\gamma_1\, u}(\lambda, \bar n n\wl P_0)\widehat{Q^{'}_{\omega_{-T}}\circ\gamma_1\, v}(\lambda', \bar n n' \wl P_0)|.
 \end{multline}
  In this integral, $\lambda', \lambda, \mu$ are all $\eps$-close to $\Lambda_\infty$, and each of them runs
  over a set of volume $\hbar^{r/2^+}$; $\bar n$ runs over a set of measure $\hbar^{J_0/2^+}
 \prod_{k>j_0}  e^{-T\eta m_{\alpha_k}\alpha_k(X_{\mu})}$, $n$ runs over a set of measure $\hbar^{J_0/2^+}\prod_{k>j_0}  e^{-T\eta m_{\alpha_k}\alpha_k(X_{\lambda})}$, and $n'$ runs over a set of measure $\hbar^{J_0/2^+}\prod_{k>j_0}  e^{-T\eta m_{\alpha_k}\alpha_k(X_{\lambda'})}$.
  Using Cauchy-Schwarz and the Plancherel formula we find that the integral  
$$ 
\int \Jac(n)dn  \Jac(n')dn'  
  |c_\hbar(\lambda)|^{-2}d\lambda |c_\hbar(\lambda')|^{-2}d\lambda'  
 | \widehat{Q^{'}_{\omega_{-T}}\circ\gamma_1\, u}(\lambda, \bar n n\wl P_0)\widehat{Q^{'}_{\omega_{-T}}\circ\gamma_1\, v}(\lambda', \bar n n' \wl P_0)|
$$
is bounded by $\hbar^{-d}\hbar^{J_0/2^+}\hbar^{r/2^{+}}
\prod_{k>j_0}  e^{-T\eta m_{\alpha_k}\alpha_k(X_{\Lambda_\infty)} }\hbar^{-J\cK \epsilon}
\norm{u}_{L^2(\bY)}\norm{v}_{L^2(\bY)} $.

 The integral 
$\int  \Jac(\bar n)d\bar n
   |c_\hbar(\mu)|^{-2}d\mu$ adds another factor $\hbar^{-d}\hbar^{J_0/2^+}\hbar^{r/2^{+}}
   \prod_{k>j_0}  e^{-T\eta m_{\alpha_k}\alpha_k(X_{\Lambda_\infty)} }\hbar^{-J\cK \epsilon}$.
   Overall we find that
   \begin{multline*}\norm{\cP^*_{m, (\bar x, x, s, \mu_0) }\cP_{m, (\bar y, y, t, \lambda_0)}}
  \\ \leq\frac{1}{ \max(16,\norm{x-y})^{\tilde M}}\frac{1}{\max(16,\norm{t-s})^{\tilde M} }
    \hbar^{J+ 4r/2^{+}-2d+2J_0/2^+} \\
  \prod_{k>j_0}  e^{-2T\eta m_{\alpha_k}\alpha_k(X_{\Lambda_\infty)} }\hbar^{-2J\cK \epsilon}
    \end{multline*}
  and vanishes for  $\norm{ \bar x-\bar y}>2$ or
 $\norm{\mu_0-\lambda_0}> 2$. 
 
    Choosing $\tilde M$ large enough, we can sum over all $  (\bar y, y, t, \lambda_0)$, and we find
    \begin{equation*}\sum_{  (\bar y, y, t, \lambda_0)\in \IZ^{2J_0+2r}} 
   \norm{\cP^*_{m, (\bar x, x, s, \mu_0) }\cP_{m, (\bar y, y, t, \lambda_0)}}^{1/2}\\
\leq
 \hbar^{J/2+ 2r/2^{+}-d+J_0/2^+}\prod_{k>j_0}  e^{-T\eta m_{\alpha_k}\alpha_k(X_{\Lambda_\infty)} }\hbar^{-J\cK \epsilon}.
 \end{equation*}
 Remembering that $J=d-r$ and that $2^+$ could be chosen arbitrarily close to $2$, we get
  $$\sum_{  (\bar y, y, t, \lambda_0)\in \IZ^{2J_0+2r}} 
   \norm{\cP^*_{m, (\bar x, x, s, \mu_0) }\cP_{m, (\bar y, y, t, \lambda_0)}}^{1/2}
\leq
 \hbar^{\frac{J_0-J}2-c\eps} \prod_{k>j_0}  e^{-T\eta m_{\alpha_k}\alpha_k(X_{\Lambda_\infty})}$$
 with a constant $c$ that depends on $\cK$.

\subsection{Norm of $\cP_{m, (\bar x, x, s, \mu_0)}\cP^*_{m, (\bar y, y, t, \lambda_0)}$}
 Using a similar calculation reversing the roles of $\overline{N}$ and $N$, we get the same bound,
  $$\sum_{  (\bar y, y, t, \lambda_0)\in \IZ^{2J_0+2r}} 
   \norm{\cP_{m, (\bar x, x, s, \mu_0) }\cP^*_{m, (\bar y, y, t, \lambda_0)}}^{1/2}
\leq
 \hbar^{\frac{J_0-J}2-c\eps} \prod_{k>j_0}  e^{-T\eta m_{\alpha_k}\alpha_k(X_{\Lambda_\infty})}.$$

Using the Cotlar-Stein lemma and the fact that the $\alpha(X_{\Lambda_\infty})$
coincide with the Lyapunov exponents $\chi(H)$ on the energy layer $\cE_{\Lambda_\infty}$,
we get
Theorem \ref{t:main}.

\section{Measure Rigidity\label{s:rigid}}
In this section we prove Theorems \ref{t:coroSL3}, \ref{t:coroSL4} and
\ref{t:coroinner}.  The proofs combine our entropy bounds
with the measure classification results of~\cite{EK03, EKL06} and the
orbit classification results of~\cite{LW01, Tom00} which give information
about $A$-invariant and ergodic measures that have a large entropy.

\begin{prop}\label{p:rigid}(Measure rigidity theory)
Let $G$ be a split group, and let $\mu$ be an ergodic $A$-invariant measure on
$X=\Gamma\backslash G$.

(1) \cite[Lem.\ 6.2]{EK03} there exist constants $s_\alpha(\mu)\in [0, 1]$
associated to the roots $\alpha\in\Delta$, such that for any $a\in A$,
$$h_{KS}(\mu, a)=\sum_{\alpha\in\Delta}s_\alpha(\mu)\left(\log \alpha(a)\right)^+.$$
Here $t^+=\max\{0, t\}$ for $t\in\IR$. Furthermore, $s_\alpha(\mu)=1$ if and only if $\mu$ is invariant by the root subgroup $U_\alpha$.

(2) \cite[Prop.\ 7.1]{EK03} Assume that $s_\alpha(\mu), s_\beta(\mu)>0$ for two roots $\alpha, \beta\in\Delta$ such that $\alpha+\beta\in\Delta$. Then $s_{\alpha+\beta}(\mu)=1$.

(3) \cite[Thm.\ 4.1(iv)]{EK03} If $G$ is locally isomorphic to $SL_n$ and $s_\alpha(\mu)>0$ for all $\alpha$, then $\mu$ is $G$-invariant.

(4) \cite[Cor.\ 3.4]{EKL06} In the case $G=SL_n$, we have $s_\alpha(\mu)=s_{-\alpha}(\mu)$ for all roots $\alpha$.
\end{prop}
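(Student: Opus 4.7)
The plan is simply to extract each of the four assertions from the literature, since the proposition is really a compilation of deep results from the Einsiedler–Katok–Lindenstrauss measure rigidity program rather than something I would reprove from scratch. I would invoke (1) and (2) from \cite{EK03}, (3) as the splitness-plus-positive-leakage version of the classification theorem in \cite{EK03}, and (4) from \cite{EKL06}, and the main effort is just to verify that the hypotheses in those papers (split $G$, ergodicity of $\mu$, invariance under $A$) match what we state here.

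For the key ideas behind each step: in (1), the plan is to apply the Ledrappier–Young formula to the coarse Lyapunov decomposition of the $A$-action on $\Gamma\backslash G$. Since $G$ is split, every coarse Lyapunov subspace under a regular $a\in A$ is a single root subspace $\fG_\alpha$, integrating to the root group $U_\alpha$; the Ledrappier–Young dimensions along these foliations give the coefficients $s_\alpha(\mu)\in[0,1]$, and the additivity of entropy over coarse Lyapunov foliations yields the formula. The fact that $s_\alpha(\mu)=1$ is equivalent to $U_\alpha$-invariance is the standard ``maximal transverse dimension implies absolute continuity along the leaf'' rigidity statement.

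For (2), I would cite the Einsiedler–Katok high-entropy method: if $s_\alpha,s_\beta>0$ then $\mu$ has non-trivial conditional measures along $U_\alpha$ and $U_\beta$, and one combines recurrence of these conditionals with the polynomial (commutator) divergence $[U_\alpha,U_\beta]\subset U_{\alpha+\beta}$ to produce translates under $U_{\alpha+\beta}$ to which $\mu$ is insensitive, forcing full $U_{\alpha+\beta}$-invariance and hence $s_{\alpha+\beta}=1$. For (3), in type $A_{n-1}$ the positive $s_\alpha$ for all $\alpha$ together with (2) cascade to give $U_\alpha$-invariance for every root, and the root groups generate $G=SL_n$, so $\mu$ is Haar. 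For (4), the symmetry $s_\alpha=s_{-\alpha}$ in $SL_n$ reflects the fact that entropy of $a$ and of $a^{-1}$ agree while the Lyapunov spectrum flips sign, combined with the $SL_n$-specific arithmetic input used in \cite[Cor.~3.4]{EKL06}.

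The only genuine obstacle, and the reason I would not attempt an independent proof, is item (2): the high-entropy method is highly nontrivial and depends on delicate leafwise measure constructions and shearing estimates that are well beyond the scope of this paper. For our purposes, quoting \cite[Prop.~7.1]{EK03} is both the cleanest and the most accurate approach. Items (1), (3) and (4) are then relatively short cross-references once the framework and notation of \cite{EK03,EKL06} are aligned with ours.
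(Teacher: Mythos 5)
Your proposal is correct and matches the paper exactly: the paper provides no proof of Proposition \ref{p:rigid}, treating it as a compilation of cited results from \cite{EK03} and \cite{EKL06}, which is precisely what you do. Your brief sketches of the underlying mechanisms (Ledrappier--Young for (1), high-entropy method for (2), the cascade plus generation argument for (3), and the entropy-symmetry argument for (4)) are accurate characterizations of what is behind those citations, though the paper does not include them.
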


We do not know if (4) holds in general.

Now let $\mu$ be an $A$-invariant probability measure with ergodic
decomposition $\mu=\int_X \mu_x d\mu(x).$
For each subset $R\subset \Delta$ let $X_R$ be the set of ergodic components
$\mu_x$ such that $\{\alpha, s_\alpha(\mu_x)>0\}=R$.
Write $w_R=\mu(X_R)$ and if $w_R>0$, let
$\mu_R=\frac{1}{w_R}\int_{X_R}\mu_x d\mu(x)$, so that $\mu=\sum_{R}w_R\mu_R$.
From Proposition \ref{p:rigid}(1) we have for $a\in A$
$$h_{KS}(\mu_R, a)\leq \sum_{\alpha\in R}\left(\log \alpha(a)\right)^+,$$
(this is in fact an avatar of the Ruelle-Pesin inequality) and hence
$$h_{KS}(\mu, a)\leq \sum_Rw_R\sum_{\alpha\in R}\left(\log \alpha(a)\right)^+.$$
By Proposition \ref{p:rigid}(2) it is enough to consider those $R$ that are
closed under the addition of roots.  In the case $G=SL_n$, parts (3) and (4)
show, respectively, that it is enough to consider those $R$ which are
symmetric and that also $\mu_\Delta=\mu_{Haar}$.

\begin{prop}\label{p:SL3} Let $G=SL_3(\IR)$, $\Gamma$ a lattice in $G$, and $\mu$ an $A$-invariant probability measure on $\Gamma\backslash G$, such that
$h_{KS}(\mu, a)\geq \frac12 h_{KS}(\mu_{Haar}, a)$ for $a=e^X$, $X=\diag(2, -1,-1)$, $\diag(-1, 2-1)$, and $\diag( -1,-1, 2)$.
Then $w_\Delta\geq\frac14$, that is, the Haar component has weight at least
$\frac14$.
\end{prop}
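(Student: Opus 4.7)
The plan is to combine the measure rigidity classification of Proposition~\ref{p:rigid} with a short linear programme on the masses in the ergodic decomposition of $\mu$. First I decompose $\mu$ into $A$-ergodic components and sort them by their root-support $R(\mu_x)=\{\alpha\in\Delta : s_\alpha(\mu_x)>0\}$. In the $A_2$ root system, with roots $\pm\alpha_{12},\pm\alpha_{23},\pm\alpha_{13}$ and $\alpha_{ij}(X)=x_i-x_j$, I claim that the only admissible $R$'s are $\varnothing$, the three rank-one pairs $R_{ij}:=\{\pm\alpha_{ij}\}$, and all of $\Delta$. Part~(4) of Proposition~\ref{p:rigid} makes $R$ symmetric; if $R$ contains two non-proportional roots, then one of their $\pm$-combinations is again a root, and applying part~(2) (iterated with the symmetry from (4)) forces every root into $R$. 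For instance if $\{\alpha_{12},\alpha_{13}\}\subset R$, the sum $\alpha_{13}+(-\alpha_{12})=\alpha_{23}$ lies in $\Delta$, so (2) gives $\alpha_{23}\in R$, and then (4) gives $-\alpha_{23}$, exhausting $\Delta$. Once $R=\Delta$, part~(3) forces the ergodic component to be Haar.

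Writing the resulting total masses $w_\varnothing,w_{12},w_{23},w_{13},w_\Delta$ (summing to $1$) and using the affineness of $h_{KS}$ under the ergodic decomposition together with the exact formula in Proposition~\ref{p:rigid}(1), I obtain for every $a=e^X$ the bound
\[ h_{KS}(\mu,a)\;\le\; \sum_{ij}w_{ij}\,|\alpha_{ij}(X)| \;+\; w_\Delta\sum_{\alpha>0}|\alpha(X)|. \]
Evaluating at $X_1=\diag(2,-1,-1)$, $X_2=\diag(-1,2,-1)$, $X_3=\diag(-1,-1,2)$ one has $\sum_{\alpha>0}|\alpha(X_k)|=6$ for each $k$, while each $|\alpha_{ij}(X_k)|$ is either $3$ or $0$; in fact $\alpha_{ij}$ vanishes on exactly one of the three $X_k$ (the one whose distinguished index is the third letter, neither $i$ nor $j$), so each $w_{ij}$ contributes $3$ to exactly two of the three inequalities below.

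The hypothesis $h_{KS}(\mu,e^{X_k})\ge 3=\tfrac12 h_{KS}(\mu_{\mathrm{Haar}},e^{X_k})$ for $k=1,2,3$ combined with the above bound gives three inequalities of the form $3\sum_{ij:\alpha_{ij}(X_k)\neq 0}w_{ij}+6w_\Delta\ge 3$. Summing them, each $w_{ij}$ is counted with total coefficient $6$ and $w_\Delta$ with total coefficient $18$, yielding
\[ 2(w_{12}+w_{23}+w_{13})+6\,w_\Delta\;\ge\; 3. \]
Since $w_{12}+w_{23}+w_{13}\le 1-w_\Delta$, one deduces $2(1-w_\Delta)+6w_\Delta\ge 3$, i.e.\ $w_\Delta\ge\tfrac14$, as claimed.

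The main obstacle is really the classification step at the start: one has to be careful about the combinatorics of $A_2$ and use parts~(2) and~(4) in tandem to rule out ``mixed'' rank-one supports. The remaining optimisation is a sharp three-variable linear programme (saturated by $w_{12}=w_{23}=w_{13}=\tfrac14$, $w_\Delta=\tfrac14$), which is exactly what lets the lower bound be $\tfrac14$ rather than some weaker constant. An essential feature is that the three chosen directions $X_k$ are permuted cyclically by the Weyl group and each kills one rank-one component, so no rank-one family can absorb the entropy budget of all three extreme flows simultaneously.
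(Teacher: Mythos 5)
Your proof is correct and follows essentially the same strategy as the paper's: decompose into $A$-ergodic components, classify the admissible root-supports $R$ as $\varnothing$, $\{\pm\alpha_{ij}\}$, or $\Delta$ via Proposition~\ref{p:rigid}, and solve the resulting linear programme using the three Weyl-conjugate test elements. The only cosmetic difference is the final arithmetic: you sum the three entropy inequalities and use $\sum_R w_R\le1$, whereas the paper derives $w_\Delta\ge w_{ij}$ for each $ij$ by symmetry and substitutes back into a single inequality -- both yield $w_\Delta\ge\frac14$, and you also usefully spell out the $A_2$ classification step that the paper leaves implicit.
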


\begin{proof} The possible sets $R$ are
$\Delta, \emptyset, \{\alpha, -\alpha\}$.
In the case of $SL_n$ the roots are indexed by
$\{ij, 1\leq i, j\leq n, i\not=j\}$:  $\alpha_{ij}$ is defined by
$\alpha_{ij}(X)=X_{ii}-X_{jj}$. Consider $a=\diag(e^1, e^1, e^{-2})$.
Then $h_{KS}(\mu_{Haar}, a)=6$ (since $s_\alpha=1$ for all $\alpha$), $h_{KS}(\mu_\emptyset, a)=0$, 
$h_{KS}(\mu_{12}, a)=0$, $h_{KS}(\mu_{13}, a)\leq 3$, $h_{KS}(\mu_{23}, a)\leq 3$. Thus,
\begin{equation}\label{e:entropygame}
3\leq h_{KS}(\mu, a)\leq 3w_{13}+3w_{23}+6w_{\Delta}.
\end{equation}
This implies
$$w_{\Delta}-w_{12}\geq 1-(w_{\Delta}+w_{12}+w_{13}+w_{23})\geq 0.$$
By symmetry it follows that $w_{\Delta}\geq w_{13}$ and
$w_{\Delta}\geq w_{23}$.  Returning to \eqref{e:entropygame},
it follows that $3 \leq 12 w_{\Delta}$.

In fact, if $h_{KS}(\mu, a)\geq \left(\frac13+\eps\right) h_{KS}(\mu_{Haar}, a)$ for $a=e^X$, $X=\diag(2, -1,-1)$, $\diag(-1, 2-1)$, or $\diag( -1,-1, 2)$, then $w_\Delta\geq \frac32 \eps.$
\end{proof}

Putting together Theorem \ref{t:main2} and Proposition \ref{p:SL3} 
gives Theorem \ref{t:coroSL3}.

For $SL_4$ the analogue of Proposition \ref{p:SL3} is given
below.  Theorem \ref{t:coroSL4} is an immediate corollary.

\begin{prop}\label{p:SL4}
Let $G=SL_4(\IR)$, $\mu$ an $A$-invariant probability measure on
$\Gamma\backslash G$, such that
$h_{KS}(\mu, a)\geq \left(\frac12+\eps\right) h_{KS}(\mu_{Haar}, a)$ for
$a=e^X$, $X$ in the Weyl orbit of $\diag(3,-1,-1,-1)$.
Then $w_{\Delta}\geq 2\eps$. If $\eps=0$ and there is no Haar component, then each ergodic component is the Haar measure on a closed orbit of the group $\left( \begin{array}{cccc} 

* & * &*& 0 \\
* & * &*& 0 \\
* & * &*& 0 \\
 0& 0 &0& * \\
 
 \end{array}  \right)$ (or one of its 4 images under the Weyl group), and the components invariant by any of these 4 subgroups
have total weight $\frac14$.
\end{prop}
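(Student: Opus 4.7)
The plan is to mimic the ergodic-decomposition argument from Proposition \ref{p:SL3}, but to apply the entropy hypothesis simultaneously for all four Weyl conjugates $X_i$ of $\diag(3,-1,-1,-1)$ (with the $3$ placed in position $i$), setting $a_i = e^{X_i}$, so that $h_{KS}(\mu_{Haar}, a_i) = 4+4+4 = 12$. By parts (2) and (4) of Proposition \ref{p:rigid}, the support set $R = \{\alpha : s_\alpha(\mu_x) > 0\}$ of any ergodic component is a symmetric closed sub-root-system of $A_3$; the possibilities are $\emptyset$, six $A_1$'s (one $\{\alpha_{jk},\alpha_{kj}\}$ per pair), three $A_1 \times A_1$'s (disjoint-transposition pairs), four $A_2$'s (one per 3-subset $\{j,k,l\} \subset \{1,2,3,4\}$), and $\Delta$ itself, for which part (3) gives $\mu_\Delta = \mu_{Haar}$. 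Write $\mu = \sum_R w_R \mu_R$ and let $w_{A_2},\, w_{A_1\times A_1},\, w_{A_1}$ denote the aggregated weights by type.

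The key input is the Ruelle--Pesin bound $h_{KS}(\mu_R, a_i) \leq \sum_{\alpha \in R^+} |\alpha(X_i)|$, evaluated case by case: an $A_2$ on $\{j,k,l\}$ contributes $8$ when $i \in \{j,k,l\}$ and $0$ otherwise; an $A_1 \times A_1$ contributes $4$ for every $i$; an $A_1$ on $\{j,k\}$ contributes $4$ when $i \in \{j,k\}$ and $0$ otherwise. Summing the hypothesis $h_{KS}(\mu, a_i) \geq 6 + 12\eps$ over $i = 1,\dots,4$ then yields
\[
4(6 + 12\eps) \;\leq\; 48\, w_\Delta + 24\, w_{A_2} + 16\, w_{A_1\times A_1} + 8\, w_{A_1},
\]
where the coefficients $48 = 4\cdot 12,\, 24 = 3\cdot 8,\, 16 = 4\cdot 4,\, 8 = 2\cdot 4$ count how many $i$'s see each type. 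Substituting $w_{A_2} = 1 - w_\Delta - w_{A_1\times A_1} - w_{A_1} - w_\emptyset$ collapses this to
\[
24\, w_\Delta \;\geq\; 48\eps + 8\, w_{A_1\times A_1} + 16\, w_{A_1} + 24\, w_\emptyset,
\]
which gives $w_\Delta \geq 2\eps$ and proves the first assertion.

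For the equality case $\eps = 0$, $w_\Delta = 0$, the last display forces $w_{A_1\times A_1} = w_{A_1} = w_\emptyset = 0$, hence $w_{A_2} = 1$, and the hypothesis must hold with equality: $h_{KS}(\mu, a_i) = 6$ exactly for every $i$. Saturation of Ruelle--Pesin at the aggregate level propagates to $\mu$-almost every ergodic component of type $R$, and running $i$ through the three indices of $R$ catches all six roots of $R$, so $s_\alpha(\mu_x) = 1$ for every $\alpha \in R$. By part (1) of Proposition \ref{p:rigid} each such $\mu_x$ is then invariant under every root subgroup $U_\alpha$ with $\alpha \in R$; these $U_\alpha$ together with $A$ generate precisely the Levi subgroup $L_R \simeq S(GL_3 \times GL_1)$ displayed in the statement, so $\mu_x$ is $L_R$-invariant. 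Since $A$-ergodicity together with $L_R$-invariance forces $L_R$-ergodicity (otherwise the $L_R$-ergodic decomposition would give a non-trivial $A$-ergodic decomposition of $\mu_x$), Ratner's theorem applied to the semisimple part $SL_3 = [L_R, L_R]$ forces $\mu_x$ to be the Haar measure on a closed $L_R$-orbit. The individual equalities $h_{KS}(\mu, a_i) = 8 \sum_{R \ni i} w_R = 6$ finally give the four equations $\sum_{R \ni i} w_R = 3/4$, whose unique solution under $\sum_R w_R = 1$ is $w_R = 1/4$ for each of the four $A_2$-subsystems.

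The main obstacle is the measure-classification step invoking Ratner's theorem: one must verify that the only algebraic subgroup $H$ with $SL_3 \subseteq H$ compatible with an $L_R$-invariant, $L_R$-ergodic, non-Haar measure is $L_R$ itself (other candidates containing $L_R$ being a parabolic with Levi $L_R$ or the full $SL_4$, either of which would give the Haar component we have excluded). Once this is in hand everything else is bookkeeping around Proposition \ref{p:rigid}.
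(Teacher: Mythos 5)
The paper does not supply a proof of Proposition \ref{p:SL4}, only stating it as ``the analogue of Proposition \ref{p:SL3}'', so your argument can only be compared with the style of the $SL_3$ and inner-type proofs.

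Your entropy bookkeeping is correct. Summing the hypothesis over all four Weyl conjugates $a_i$ cleanly systematizes the ``by symmetry'' trick in the written $SL_3$ proof; the list of symmetric closed subsystems $R\subset A_3$ is complete, the per-$(R,i)$ Ruelle--Pesin bounds $(8,4,4,0)$ are right, the elimination $w_{A_1\times A_1}=w_{A_1}=w_\emptyset=0$ in the case $\eps=0$, $w_\Delta=0$ follows, and the closing linear algebra (four equations $\sum_{R\ni i}w_R=\frac34$, unique solution $w_R=\frac14$) is correct. The propagation of saturation to $\mu$-a.e.\ ergodic component, and the deduction that $s_\alpha(\mu_x)=1$ for all $\alpha\in R$ by running $i$ over the three indices of $R$, is also right.

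The gap is where you flag it, but your description of the remaining obstacle misidentifies it. Ratner's measure classification applies to measures invariant and ergodic under a subgroup generated by one-parameter unipotent subgroups; for $L_R\simeq S(GL_3\times GL_1)$ that subgroup is $L_R^+=SL_3$, a proper subgroup with a one-parameter central torus $Z$ on top. What Ratner directly gives is that the \emph{$SL_3$-ergodic components} of $\mu_x$ are algebraic, each Haar on a closed $F$-orbit with $SL_3\subseteq F$. The support-set condition (a.e.\ such component is not $U_\alpha$-invariant for $\alpha\notin R$, otherwise $\mu_x$ would be) forces $F\in\{SL_3,L_R\}$, since the Lie algebra of $F$ is an $\mathfrak{sl}_3$-submodule of $\mathfrak{sl}_4$ and any nonzero intersection with $V$ or $V^*$ brings in a forbidden root group. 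The case your sketch does not treat is $F=SL_3$: then each $SL_3$-ergodic component is Haar on a closed $SL_3$-orbit, and $\mu_x$ is obtained by smearing one such component over the $Z$-direction, which is not a priori Haar on a closed $L_R$-orbit. Closing this gap requires arguing that the $Z$-stabilizer of a typical $SL_3$-ergodic component is a lattice in $Z\cong\mathbb{R}$ (otherwise its $Z$-orbit would carry no invariant probability measure, contradicting the $A$-ergodicity of $\mu_x$), and then assembling this with the lattice in $SL_3$ to produce a lattice in $L_R$, i.e.\ a closed $L_R$-orbit. Your parenthetical, which only lists overgroups of $L_R$, does not address this. An alternative, matching the route the paper takes for Proposition \ref{p:inner}, would be to invoke \cite[Thm.\ 1.3]{EKL06} for algebraicity of $\mu_x$ directly; but one must then verify that theorem's hypotheses hold for an arbitrary cocompact lattice in $SL_4(\mathbb{R})$, which the paper only checks for inner-type lattices.
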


Theorem \ref{t:coroSL3} and its analogue for $G=SL_4$ apply to any lattice
$\Gamma$.  On the other hand for $G=SL_n$ 
with $n$ large some quotients $\Gamma\backslash G$ support ergodic invariant
measures of large entropy other than Haar measure,
so our entropy bound is not strong enough
to obtain a Haar component.  However, for some lattices $\Gamma$ there
are further restrictions on the set of ergodic components, so that
non-Haar measures have much smaller entropy.
This is the case where $\Gamma$ is a lattice associated to a divison
algebra.

We give here a quick outline of the construction, refering the reader
to~\cite{Tom00} and its references (or~\cite{Pla94}) for a detailed discussion.
Let $F$ be a central simple algebra of degree $n$ over $\IQ$ and assume that
$F$ \emph{splits} over $\IR$, that is that $F\otimes_{\IQ}\IR\simeq M_n(\R)$.
Next, let $\mathcal{O}\subset F$ be an \emph{order}, that is
a subring whose additive group is generated by a basis for $F$ over $\IQ$.
Finally, let $\mathcal{O}^1 \subset SL_n(\IR)$ denote the subgroup
of elements of $\mathcal{O}$ with determinant $1$ (``reduced norm $1$'').
Such $\mathcal{O}^1$ are in fact lattices; any lattice $\Gamma<SL_n(\IR)$
commensurable with some $\mathcal{O}^1$ is said to be of \emph{inner type}.
We simply say that they are \emph{associated} to the algebra $F$.
Our Theorem \ref{t:main2} applies when the lattice is co-compact,
which is the case if and only if $F$ is a division algebra.

We shall need the fact that those measure rigidity results of~\cite{EKL06}
which are stated specifically for $SL_n(\IZ)$ apply, in fact, to any lattice
of inner type, since the proof of Lemma 5.2 of that paper carries over to
the more general situation.  We give the easy argument here:

\begin{lem}\label{l:inner}
Let $\Gamma < SL_n(\R)$ be a lattice of inner type.
Then there is no $\gamma \in \Gamma$, diagonalizable in $\SL_n(\R)$,
such that $\pm 1$ are not eigenvalues of $\gamma$ and all eigenvalues of
$\gamma$ are simple except for precisely one which occurs with
multiplicity two.
\end{lem}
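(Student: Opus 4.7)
Suppose $\gamma \in \Gamma$ has the stated properties. The proof follows \cite[Lem.\ 5.2]{EKL06} (which treats $\Gamma = SL_n(\IZ)$), augmented by a commensurator step that places $\gamma$ inside $F$. Specifically, commensurability of $\Gamma$ with $\mathcal{O}^1$ implies that $\gamma \mathcal{O}^1 \gamma^{-1}$ is also commensurable with $\mathcal{O}^1$, so $\gamma$ lies in the commensurator of $\mathcal{O}^1$ in $SL_n(\IR)$; for $n \geq 3$, Margulis's commensurator theorem identifies this commensurator with $\{f \in F : \operatorname{Nrd}(f) = 1\}$, and hence $\gamma \in F$. (The case $n = 2$ is vacuous, since the hypothesis forces $\gamma = \lambda I$ with $\lambda^2 = \det\gamma = 1$ and so $\lambda = \pm 1$.)

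Next I would show $\lambda \in \IQ$. Because $\gamma \in F$, its matrix characteristic polynomial coincides with the reduced characteristic polynomial and therefore lies in $\IQ[X]$. The hypothesis singles out $\lambda$ as the unique double root; the absolute Galois group permutes roots while preserving multiplicities, so it must fix $\lambda$, giving $\lambda \in \IQ$.

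Finally I would extract the contradiction via integrality. Pick $k \geq 1$ with $\gamma^k \in \mathcal{O}^1 \subset \mathcal{O}$; the reduced characteristic polynomial $P_{\gamma^k}$ is then monic in $\IZ[X]$, and its rational root $\lambda^k$ is therefore an integer. Since $(X - \lambda^k)^2$ is a monic integer factor of $P_{\gamma^k}$ (from the double eigenvalue), Gauss's lemma gives $P_{\gamma^k} / (X - \lambda^k)^2 \in \IZ[X]$, and hence $\prod_i \mu_i^k \in \IZ$. Substituting into $1 = \det \gamma^k = \lambda^{2k} \prod_i \mu_i^k$ presents $1$ as a product of two integers with $\lambda^{2k} \geq 0$; this forces $\lambda^{2k} = 1$ and so $\lambda = \pm 1$, contradicting the hypothesis.

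The main obstacle is the reduction $\gamma \in F$: the more naive approach of applying the Galois argument to a power $\gamma^k \in \mathcal{O}^1$ rather than to $\gamma$ itself would fail when $k$ is even and the $\mu_i$ contain a $\pm$-pair, because $\gamma^k$ then acquires additional eigenvalues of multiplicity two and the Galois action need not single out $\lambda^k$. The commensurator step is precisely what lets the Galois argument be applied directly to $\gamma$.
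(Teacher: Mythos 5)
Your proof follows the same overall strategy as the paper's: use Galois invariance of the characteristic polynomial to show that the unique double eigenvalue $\lambda$ must be rational, then use integrality plus $\det\gamma=1$ to force $\lambda=\pm1$, contradicting the hypothesis. You also correctly identify the genuine subtlety that the paper glosses over: the hypothesis that $\Gamma$ is merely \emph{commensurable} with $\mathcal O^1$ does not put $\gamma$ inside $F$, and applying the Galois argument only to $\gamma^r\in\mathcal O$ is dangerous because taking an even power can merge a $\pm$-pair of simple eigenvalues into a second double eigenvalue. Flagging this is a real improvement in rigor over the paper's own exposition.

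However, the commensurator step as you state it is not correct. The commensurator of $\mathcal O^1$ in $\SL_n(\IR)$ is strictly larger than $\{f\in F:\operatorname{Nrd}(f)=1\}$: it consists of all $g=cf$ with $f\in F^\times$ and $c\in\IR^\times$ satisfying $c^n\operatorname{Nrd}(f)=1$, and $c$ need not be rational. (This follows from Skolem--Noether: a commensurating $g$ induces an automorphism of $F$ over $\IQ$, hence $\Ad(g)=\Ad(f)$ for some $f\in F^\times$, so $g\in\IR^\times f$.) Concretely, $g=\diag(2^{2/3},2^{-1/3},2^{-1/3})\in\SL_3(\IR)$ commensurates $\SL_3(\IZ)$ (conjugation by $g$ equals conjugation by $\diag(2,1,1)\in GL_3(\IQ)$) yet $g\notin\SL_3(\IQ)$. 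So ``$\gamma\in F$'' does not follow, and your subsequent appeal to the reduced characteristic polynomial of $\gamma$ is unjustified. (Margulis's commensurator theorem is also not the right reference here: that theorem characterizes arithmeticity by density of the commensurator, but does not give the explicit description you use.)

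The argument can be repaired without ever putting $\gamma$ in $F$. Write $\gamma=cf$ with $f\in F^\times$, $c\in\IR^\times$, $c^n=\operatorname{Nrd}(f)^{-1}\in\IQ^\times$. The reduced characteristic polynomial of $f$ lies in $\IQ[X]$, and the eigenvalues of $\gamma$ are $c$ times those of $f$, so $f$ has a unique double eigenvalue $\mu_1$. Galois fixes $\mu_1$, so $\mu_1\in\IQ$. Your integrality step (via $\gamma^r\in\mathcal O$) shows all eigenvalues of $\gamma$, in particular $\lambda=c\mu_1$, are algebraic integers, and $\det\gamma=1$ makes $\lambda$ a \emph{unit} in $\bar\IZ$. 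Now $\lambda^n=c^n\mu_1^n\in\IQ$, so $\lambda^n\in\IZ$ and is a unit, hence $\lambda^n=\pm1$; since $\lambda$ is real this forces $\lambda=\pm1$, the desired contradiction. This corrected route still carries out your plan --- Galois identifies the double eigenvalue, integrality kills it --- but uses $\lambda^n\in\IQ$ rather than the false $\gamma\in F$. The same fix is implicitly needed in the paper's own proof, which asserts ``$f(x)\in\IQ[x]$'' (for the ordinary characteristic polynomial of $\gamma$) without addressing the scalar $c$.
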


\begin{proof}
Say that $\Gamma$ is associated to the central simple algebra $F$,
and let $\mathcal{O}$ be an order in $F$ such that
$\Gamma \cap \mathcal{O}^1$ has finite index in $\Gamma$.

Assume by contradiction that there exists $\gamma$ as in the statement,
and choose $r$ so that $\gamma^r \in \mathcal{O}$.
Since $\mathcal{O}$ is a ring with
a finitely generated additive group, the Cayley-Hamilton Theorem shows that
$\gamma^r$ is integral over $\Z$.  It follows that every eigenvalue of
$\gamma^r$, hence of $\gamma$, is an algebraic integer. The fact that
$\det(\gamma) = 1$ now shows that the rational eigenvalues of $\gamma$
must be integral divisors of $1$, so by assumption all eigenvalues
of $\gamma$ are irrational.  Let $f(x)\in\IR[x]$ be the characteristic
polynomial of $\gamma$, when $\gamma$ is thought of as an element of
$SL_n(\IR)$.  We will show $f(x)\in\IQ[x]$.
Then the multiplicity the eigenvalues of $f$ would
be Galois invariant giving the desired contradiction.  For the last claim
extend scalars to $\IC$ and note that the usual proof that the reduced
trace and norm belong to $\IQ$ applies to the entire characteristic polynomial.
\end{proof}

\begin{prop}\label{p:inner}
Let $n \geq 3$ and let $t$ be the largest proper divisor of $n$.
Let $G=SL_n(\IR)$ and let $\Gamma<G$ be a lattice of inner type.
Let $\mu$ be an $A$-invariant probability measure on $\Gamma\backslash G$
such that
$h_{KS}(\mu, a)\geq \frac12 h_{KS}(\mu_{Haar}, a)$ for $a=e^X$, $X$ a Weyl
conjugate of $\diag(n-1, -1,\cdots, -1)$.
Then $w_\Delta\geq\frac{\frac{(n+1)}2-t}{n-t} > 0$.  In other words, $\mu$
must contain an ergodic component proprtional to Haar measure.
\end{prop}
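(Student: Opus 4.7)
The plan is to mirror the proof of Proposition \ref{p:SL3}, decomposing $\mu$ by which roots have positive Lyapunov contribution, bounding the entropy of each piece, and averaging over a full Weyl orbit of diagonal elements to solve for $w_\Delta$. The inner-type hypothesis enters only to restrict which partitions of $\{1,\ldots,n\}$ can appear among non-Haar ergodic components.

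Following Section \ref{s:rigid}, write $\mu=\sum_R w_R\mu_R$ where $R$ ranges over subsets of $\Delta$ that are symmetric and closed under root addition (by parts (2) and (4) of Proposition \ref{p:rigid}). For $G=SL_n$ such $R$ correspond bijectively to partitions of $\{1,\ldots,n\}$ into blocks $B_1,\ldots,B_k$ via $R=\{\alpha_{ij}:i\neq j\text{ in the same block}\}$, so $|R|=\sum_i|B_i|(|B_i|-1)$; by Proposition \ref{p:rigid}(3), the one-block partition gives $\mu_\Delta=\mu_{\mathrm{Haar}}$. The key arithmetic step is the claim that for $\Gamma$ associated to a division algebra of degree $n$, every proper $R$ supporting a non-trivial $\mu_R$ corresponds to a partition all of whose block sizes divide $n$. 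Granting this, since $R\neq\Delta$ excludes the single block of size $n$, every block has size at most $t$, whence
\[
|R|=\sum_i|B_i|(|B_i|-1)\leq (t-1)\sum_i|B_i|=(t-1)n.
\]

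For the final step, introduce the $n$ Weyl conjugates $a_\ell=e^{X_\ell}$ of $\exp\diag(n-1,-1,\ldots,-1)$ obtained by placing the entry $n-1$ in position $\ell$. A direct computation gives $\alpha_{ij}(X_\ell)^+=n$ when $\ell=i$ and $0$ otherwise, so for any ergodic $\mu_R$,
\[
\sum_{\ell=1}^n h_{KS}(\mu_R,a_\ell)=n\sum_{\alpha\in R}s_\alpha(\mu_R)\leq n|R|.
\]
Combining the ergodic decomposition with $|R|\leq (t-1)n$ for $R\neq\Delta$ and $|\Delta|=n(n-1)$ yields
\[
\sum_\ell h_{KS}(\mu,a_\ell)\leq (1-w_\Delta)n^2(t-1)+w_\Delta n^2(n-1).
\]
On the other hand, summing the hypothesis $h_{KS}(\mu,a_\ell)\geq\frac12 n(n-1)$ over $\ell$ gives $\sum_\ell h_{KS}(\mu,a_\ell)\geq\frac12 n^2(n-1)$. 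Solving the two inequalities for $w_\Delta$ produces exactly $w_\Delta\geq\frac{(n+1)/2-t}{n-t}$.

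The main obstacle is verifying the divisibility constraint on block sizes. This requires combining the classification of positive-entropy $A$-invariant ergodic measures on $\Gamma\backslash G$ from~\cite{EK03, EKL06, Tom00} (which forces such $\mu_R$ to be algebraic, supported on a closed orbit of an intermediate subgroup containing $A$) with the inner-type arithmetic via Lemma \ref{l:inner}: the closed orbits of the relevant Levi subgroups are parametrized by $\IQ$-sub-algebras of the ambient central simple algebra $F$, whose degrees divide $\deg_\IQ F=n$. Extending the measure-rigidity statements of~\cite{EKL06} from $SL_n(\IZ)$ to the general inner-type case (using the Cayley--Hamilton/Galois-invariance argument underlying Lemma \ref{l:inner}) is the one non-routine ingredient.
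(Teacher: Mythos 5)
Your proposal follows the same overall strategy as the paper's proof: decompose $\mu$ into ergodic components indexed by symmetric, addition-closed subsets $R\subset\Delta$ (equivalently, partitions of $\{1,\dots,n\}$); use measure rigidity, together with Lemma \ref{l:inner} to extend~\cite{EKL06} from $SL_n(\IZ)$ to inner-type lattices, to restrict which non-trivial partitions can carry positive entropy; then compare the entropy bound with the hypothesis to extract $w_\Delta$. Your final inequality, $\frac{n-1}{2}\le w_\Delta(n-1)+(1-w_\Delta)(t-1)$, is exactly the paper's. The averaging over all $n$ Weyl conjugates of $\diag(n-1,-1,\dots,-1)$ is a cosmetic variation: the paper picks the single $a=\diag(e^{n-1},e^{-1},\dots,e^{-1})$ and bounds $h_{KS}(\mu_R,a)\le n(|B_1|-1)\le n(t-1)$ directly, where $B_1$ is the block containing $1$. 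Your averaging removes the need to single out a position but buys nothing quantitatively; both give the same bound.

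The one substantive difference is the constraint on the partitions $R$. You claim that each block size divides $n$, and you candidly label the verification of this as the ``one non-routine ingredient.'' The paper in fact proves a strictly stronger statement: by~\cite{LW01} (Lemma 4.1, Lemma 6.2) and~\cite{Tom00} (Thm.~1.2, \S4.2), the intermediate group $H$ supporting a positive-entropy algebraic component must be conjugate to the connected component of $GL_k(\IR)^l\cap SL_n(\IR)$ with $n=kl$ --- so all blocks have the \emph{same} size $k$, a proper divisor of $n$ (hence $k\le t$). Your weaker divisibility constraint is a consequence of this and still yields $|R|\le(t-1)n$, so your proof would go through, but the route you sketch (closed orbits parametrized by $\IQ$-subalgebras of $F$) is not developed enough to stand on its own and does not coincide with the argument the paper actually uses, namely invoking the orbit-classification results of~\cite{LW01, Tom00}. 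You should cite those references for the structural claim rather than re-deriving it, and note that the inner-type hypothesis enters at the earlier step (via Lemma \ref{l:inner}) to make~\cite[Thm.~1.3]{EKL06} applicable, not to restrict the block structure, which is already forced for any lattice by the orbit classification.
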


Theorem \ref{t:coroinner} follows.

\begin{proof}
As above, let $\mu_x$ be an ergodic component of $\mu$ that has
positive entropy with respect to $e^X$.  By~\cite[Thm.\ 1.3]{EKL06}
(replacing Lemma 5.2 of that paper with Lemma \ref{l:inner} above)
$\mu_x$ must be \emph{algebraic}: there exists a closed subgroup
$H$ containing $A$,
and a closed orbit $zH$ in $\Gamma\backslash G$, such that $\mu_x$ is the
H-invariant measure on $zH$.  By~\cite{LW01} (the arguments are contained
in the proof of Lemma 4.1 and Lemma 6.2) and~\cite{Tom00}
(see Thm 1.2 and \S 4.2), $H$ must be reductive, and conjugate to
the connected component of $GL_k(\IR)^l\cap SL_n(\IR)$; where $n=kl$
and $GL_k(\IR)^l$ denotes the block-diagonal embedding of $l$ copies of
$GL_k(\IR)$ into $GL_n(\IR)$.

By the discussion following Proposition \ref{p:rigid} we see that for such
lattices $\Gamma$ the possible sets $R$ are obtained by partitioning $n$
into $l$ subsets $B_1, B_2, \ldots, B_l$ of equal size $k$, and letting
$$R=\{\alpha_{ij}, 1\leq i, j\leq n, \exists u \mbox{ such that } i\in B_u \mbox{ and } j\in B_u\}.$$

Consider $a=\diag(e^{n-1}, e^{-1}, \ldots, e^{-1})$.
Then $h_{KS}(\mu_{Haar}, a)=n(n-1)$, and for every subset $R$ defined as
above by a non-trivial partition, we have $h_{KS}(\mu_R)\leq n(t-1)$.
The inequality $h_{KS}(\mu, a)\geq \frac12 h_{KS}(\mu_{Haar}, a)$ now shows that
$$w_{\Delta}(n-1) + \sum_{R\neq\Delta} w_R (t-1) \geq \frac{n-1}2.$$
In other words we have
$$w_{\Delta}(n-1)+(1-w_\Delta) (t-1)\geq \frac{n-1}{2},$$
which is equivalent to the statement of the theorem.
\end{proof}

 %%%%%%%%%%%%%%%%%%%%%%%%%%%%%%%%%%%%
%%%%%%%%%%%%%%%%%%%%%%%%%%%%%%%%%%%%
%%%%%%%%%%%%%%%%%%%%%%%%%%%%%%%%%%%%

 \end{document}